\font\tencyr=wncyr10
\font\sevencyr=wncyr7
\def\cyr{\tencyr\cyracc}
\def\cyri{\sevencyr\cyracc}
\def\Sh{\textrm{{\cyr Sh}}}
\def\B{\textrm{{\cyr B}}}
\def\Bi{\textrm{{\cyri B}}}
\def\Shi{\textrm{{\cyri Sh}}}
\def\b{\textrm{{\cyr b}}}
\numberwithin{equation}{subsection}
\theoremstyle{plain}
\newtheorem{thm}[subsection]{Theorem}
\newtheorem{prop}[subsection]{Proposition}
\newtheorem{lemma}[subsection]{Lemma}
\newtheorem{cor}[subsection]{Corollary}
\newtheorem*{prin}{The Brauer-Manin obstruction for embedding problems}
\newtheorem*{prin2}{Local-global principle for embedding problems}
\newtheorem*{prin3}{Ad\`elic form of the local-global principle}
\theoremstyle{definition}
\newtheorem{example}[subsection]{Example}
\newtheorem{defn}[subsection]{Definition}
\newtheorem{notn}[subsection]{Notation}
\newtheorem{cont}[subsection]{Contents}
\newtheorem{ackn}[subsection]{Acknowledgement}
\theoremstyle{remark}
\newtheorem{rem}[subsection]{Remark}
\newcommand{\HH}{\mathbb{H}}
\def\alp{{\alpha}}
\def\sig{{\sigma}}
\def\Gam{{\Gamma}}
\DeclareMathOperator{\Ker}{Ker}
\DeclareMathOperator{\Br}{Br}
\DeclareMathOperator{\Hom}{Hom}
\DeclareMathOperator{\Ext}{Ext}
\def\inv{\textup{inv}}
\def\lrar{\longrightarrow}
\def\x{\stackrel}
\begin{document}
\title{The Brauer-Manin obstruction to the local-global principle for the embedding problem}
\author{Ambrus P\'al and Tomer M. Schlank}
\footnotetext[1]{\it 2014 Mathematics Subject Classification. \rm 12F12, 11R32.}
\date{April 25, 2017.}
\address{Department of Mathematics, 180 Queen's Gate, Imperial College, London, SW7 2AZ, United Kingdom}
\email{a.pal@imperial.ac.uk}
\address{Einstein Institute of Mathematics, The Hebrew University of Jeru\-sa\-lem, Jeru\-sa\-lem, 91904, Israel}
\email{tomer.schlank@gmail.com}
\begin{abstract} We study an analogue of the Brauer-Manin obstruction to the local-global principle for embedding problems over global fields. We will prove the analogues of several fundamental structural results. In particular we show that the (algebraic) Brauer-Manin obstruction is the only one to weak approximation when the embedding problem has abelian kernel. As a part of our investigations we also give a new, elegant description of the Tate duality pairing and prove a new theorem on the cup product. 
\end{abstract}
\maketitle
\pagestyle{myheadings}
\markboth{Ambrus P\'al and Tomer M. Schlank}{The Brauer-Manin obstruction for the embedding problem}

\section{Introduction}

Let $F$ be an arbitrary field. Fix a separable closure $\overline F$ of $F$ and let $\Gamma=\textrm{Gal}(\overline F|F)$ denote the absolute Galois group of $F$. An embedding problem $\mathbf E$ over $F$ is a diagram:
\begin{equation}\CD @.\Gamma\\
@.@V\psi VV\\
G_1@>\phi>>G_2\endCD\label{1.0.1}
\end{equation}
where $G_1,G_2$ are finite groups, $\phi$ and $\psi$ are group homomorphisms, the map $\phi$ is surjective, and $\psi$ is assumed to be continuous with respect to the Krull topology on $\Gamma$ and the discrete topology on $G_2$. We say that the embedding problem $\mathbf E$ is solvable if there is a continuous homomorphism $\widetilde{\psi}:\Gamma\rightarrow G_1$ which makes the diagram above commutative. We will call such a homomorphism $\widetilde{\psi}$ a solution of $\mathbf E$.  Let $\textrm{Ker}(\mathbf E)=\textrm{Ker}(\phi)$. We will say that two solutions of $\mathbf E$ are conjugate if they are conjugate by an element of Ker$(\mathbf E)$. Conjugacy is clearly an equivalence relation. Let Sol$(\mathbf E)$ denote the set of equivalence classes of this relation. Note that in the subject of field arithmetic it is common to define a solution to an embedding problem to be a surjective continuous homomorphism $\widetilde{\psi}:\Gamma\rightarrow G_1$, and to refer to our notion of solution as a weak solution. In this paper we will not impose the surjectivity condition. 

Assume now that $F$ is a global field. In this case there is an obvious family of obstructions to the solvability of $\mathbf E$ which we will call local obstructions. Let $|F|$ denote the set of all places of $F$ and for every $x\in|F|$ let $F_x$ denote the completion of $F$ with respect to $x$. Fix a separable closure  $\overline F_x$ of $F_x$ and let $\Gamma_x=\textrm{Gal}(\overline F|F)$ denote the absolute Galois group of $F_x$. The choice of an $F$-embedding $\eta_x:\overline F\rightarrow\overline F_x$ induces an injective homomorphism $\iota_x:\Gamma_x\rightarrow\Gamma$ whose conjugacy class is actually independent of these choices. Let $\mathbf E$ be an embedding problem over $F$. Then for every $x\in|F|$ we define the embedding problem $\mathbf E_x$ over $F_x$ associated to $(\mathbf E,x)$ to be the diagram:
$$\CD @.\Gamma_x\\
@.@V\iota_x\circ\psi VV\\
G_1@>\phi>>G_2.\endCD$$
Clearly the embedding problem $\mathbf E_x$ is solvable if the problem $\mathbf E$ is; this is the local obstruction we mentioned above. The analogue of the local-global principle in this setting is the following statement:
\begin{prin2} Let $F$ and $\mathbf E$ be as above and assume that for every $x\in|F|$ there is no local obstruction to the embedding problem $\mathbf E$ at the place $x$. Then $\mathbf E$ is solvable.
\end{prin2}
It is known that the principle above fails by the work of P. Roquette (see \cite{Ro}). The aim of this article is to examine the potential failure of the local-global principle for embedding problems by studying an analogue of the Brauer-Manin obstruction. In particular our results imply that this obstruction explains all failures of the local-global principle in a large class of cases, including the counterexamples provided in \cite{Ro}. In order to formulate these we need to introduce some new concepts and notation. For every non-archimedean $x\in|F|$ let $u_x:\Gamma_x\rightarrow\widehat{\mathbb Z}$ denote the homomorphism onto the Galois group of the maximal unramified extension of $F_x$ in $\overline F_x$. We say that a continuous homomorphism $h:\Gamma_x\rightarrow G$ is unramified if $x$ is non-archimedean and $h$ factors through $u_x$.

Clearly a solution conjugate to an unramified solution is also unramified.
Let $\textrm{Sol}_{un}(\mathbf E_x)$ be the set of conjugacy classes of all solutions of $\mathbf E_x$ which are unramified in the sense above. Let $\textrm{Sol}_{\mathbb A}(\mathbf E)$ denote the set:
$$\textrm{Sol}_{\mathbb A}(\mathbf E)=\{\prod_{x\in|F|}h_x|\textrm{$h_x\in\textrm{Sol}_{un}(\mathbf E_x)$ for almost all $x\in|F|$}\}\subseteq
\prod_{x\in|F|}\textrm{Sol}(\mathbf E_x).$$
Similarly by an ad\`elic solution of the embedding problem $\mathbf E$ we mean an expression $\prod_{x\in|F|}h_x$ such that $h_x$ is a solution of the embedding problem $\mathbf E_x$ for every $x\in|F|$ which is unramified for almost all $x$. For every $x\in|F|$ let $r_x:\textrm{Sol}(\mathbf E)\rightarrow\textrm{Sol}(\mathbf E_x)$ denote the map furnished by the rule $\widetilde{\psi}\mapsto\iota_x\circ\widetilde{\psi}$. Then image of the map
$$r=\prod_{x\in|F|}r_x:\textrm{Sol}(\mathbf E)\rightarrow\prod_{x\in|F|}
\textrm{Sol}(\mathbf E_x)$$
lies in $\textrm{Sol}_{\mathbb A}(\mathbf E)$. It is not difficult to show (see Lemma \ref{3.2} below) that the local-global principle above can be reformulated in the following equivalent form:
\begin{prin3}{} Let $F$ and $\mathbf E$ be as above. Then the set $\textrm{\rm Sol}(\mathbf E)$ is non-empty if and only if the set $\textrm{\rm Sol}_{\mathbb A}(\mathbf E)$ is non-empty.
\end{prin3}
In order to study the image of $\textrm{Sol}(\mathbf E)$ in $\textrm{Sol}_{\mathbb A}(\mathbf E)$ under the map $r$ we will define the Brauer group $\textrm{Br}(\mathbf E)$ of the embedding problem $\mathbf E$ (see Definition 2.2) and a pairing:
$$\langle\cdot,\cdot\rangle:\textrm{Sol}_{\mathbb A}(\mathbf E)\times\textrm{Br}(\mathbf E)\rightarrow\mathbb Q/\mathbb Z$$
(see Definition \ref{3.5}) such that $r(\textrm{Sol}(\mathbf E))$ is annihilated by this pairing (see Lemma \ref{3.6}). (This idea can be found already in the paper \cite{St}.) Moreover we will also define two subgroups $\textrm{Br}_1(\mathbf E)$ and $\B(\mathbf E)$ of $\textrm{Br}(\mathbf E)$ (see Definition \ref{2.4}) analogous to the algebraic Brauer group and the Brauer group of locally constant elements of algebraic varieties, respectively, playing an important role in our main results. Let $\textrm{Sol}_{\mathbb A}^{\textrm{\rm Br}}(\mathbf E)$, $\textrm{Sol}_{\mathbb A}^{\textrm{\rm Br}_1}(\mathbf E)$ and $\textrm{Sol}_{\mathbb A}^{\Bi}(\mathbf E)$ denote the subset of $\textrm{Sol}_{\mathbb A}(\mathbf E)$ annihilated by $\textrm{Br}(\mathbf E),\textrm{Br}_1(\mathbf E)$ and by $\B(\mathbf E)$ with respect to the pairing $\langle\cdot,\cdot\rangle$, respectively. Now we can state our first main result:
\begin{thm}\label{1.1} Assume that $\textrm{\rm Ker}(\mathbf E)$ is abelian and $\textrm{\rm char}(F)$ does not divide the order of $\textrm{\rm Ker}(\mathbf E)$. Then the following claims are equivalent:
$$\textrm{\rm Sol}(\mathbf E)\neq0\Leftrightarrow
\textrm{\rm Sol}_{\mathbb A}^{\textrm{\rm Br}}(\mathbf E)\neq0
\Leftrightarrow
\textrm{\rm Sol}_{\mathbb A}^{\textrm{\rm Br}_1}(\mathbf E)\neq0
\Leftrightarrow
\textrm{\rm Sol}_{\mathbb A}^{\Bi}(\mathbf E)\neq0.$$
\end{thm}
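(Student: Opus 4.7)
The chain of implications decomposes naturally. The first, $\mathrm{Sol}(\mathbf{E})\neq\emptyset \Rightarrow \mathrm{Sol}_{\mathbb{A}}^{\mathrm{Br}}(\mathbf{E})\neq\emptyset$, is Lemma \ref{3.6} applied to the image $r(\mathrm{Sol}(\mathbf{E}))$; the two subsequent implications $\mathrm{Sol}_{\mathbb{A}}^{\mathrm{Br}}(\mathbf{E})\neq\emptyset \Rightarrow \mathrm{Sol}_{\mathbb{A}}^{\mathrm{Br}_1}(\mathbf{E})\neq\emptyset \Rightarrow \mathrm{Sol}_{\mathbb{A}}^{\Bi}(\mathbf{E})\neq\emptyset$ are formal, since $\Bi(\mathbf{E})\subseteq\mathrm{Br}_1(\mathbf{E})\subseteq\mathrm{Br}(\mathbf{E})$ reverses when one passes to annihilators. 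All the content therefore sits in the closing implication $\mathrm{Sol}_{\mathbb{A}}^{\Bi}(\mathbf{E})\neq\emptyset \Rightarrow \mathrm{Sol}(\mathbf{E})\neq\emptyset$.

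The plan for this step is to recast everything in Galois cohomology. Since $A:=\Ker(\phi)$ is abelian, conjugation in $G_1$ makes it a $G_2$-module, and via $\psi$ a continuous $\Gamma$-module; pulling the extension $1\to A\to G_1\to G_2\to 1$ back along $\psi$ then produces a class $\xi\in H^2(\Gamma,A)$. Standard obstruction theory for abelian-kernel embedding problems gives that $\mathbf{E}$ is solvable if and only if $\xi=0$, in which case $\mathrm{Sol}(\mathbf{E})$ is a torsor under $H^1(\Gamma,A)$; the same holds place-by-place for $\xi_x:=\iota_x^*\xi$ and $\mathrm{Sol}(\mathbf{E}_x)$. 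Any adelic family $(h_x)\in\mathrm{Sol}_{\mathbb{A}}^{\Bi}(\mathbf{E})$ forces every $\xi_x$ to vanish, so $\xi\in\Sh^2(F,A)$, and the task reduces to showing $\xi=0$. This is where the assumption that $|A|$ is coprime to $\mathrm{char}(F)$ is crucial: it allows us to invoke Poitou-Tate duality, which supplies a perfect pairing $\Sh^2(F,A)\times\Sh^1(F,A^*)\to\mathbb{Q}/\mathbb{Z}$, with $A^*:=\Hom(A,\overline{F}^\times)$ the Cartier dual.

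The crux of the argument is then to produce a natural homomorphism $\Sh^1(F,A^*)\to\Bi(\mathbf{E})$ and to verify that, after choosing a local base solution to realize each $\mathrm{Sol}(\mathbf{E}_x)$ as a torsor under $H^1(\Gamma_x,A)$, the pairing $\langle\cdot,\cdot\rangle$ of Definition \ref{3.5} pulls back to the classical cup-product pairing in the Poitou-Tate sequence summed over all places. With this compatibility established, the hypothesis that $(h_x)$ is annihilated by all of $\Bi(\mathbf{E})$ translates to $\xi\in\Sh^2(F,A)$ pairing trivially against every class in $\Sh^1(F,A^*)$, whence $\xi=0$ by the non-degeneracy of Poitou-Tate, producing the desired global solution.

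The principal obstacle is precisely this identification of pairings: matching the recipe defining $\langle\cdot,\cdot\rangle$, which lives naturally in the world of embedding problems rather than Galois modules, with the cup-product pairing underlying Poitou-Tate duality. This is exactly the place where I would expect the paper's promised ``new, elegant description of the Tate duality pairing'' and its companion cup-product theorem to do the essential work, by rewriting the Poitou-Tate pairing in a form that can be read off directly from an adelic solution and a class in $\Bi(\mathbf{E})$; once that dictionary is in place, the rest of the argument is a short Poitou-Tate chase.
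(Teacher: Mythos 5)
Your proposal is correct and follows essentially the same route as the paper's cohomological proof: reduce to showing the extension class $c_{\mathbf E}\in\Sh^2(F,\textrm{Ker}(\mathbf E))$ vanishes, identify $\B(\mathbf E)\cong\Sh^1(F,\textrm{Ker}(\mathbf E)^{\vee})$ (Proposition \ref{2.7}), and conclude by the perfectness of the Tate duality pairing once its compatibility with the Brauer--Manin pairing $\b$ is established -- which is exactly Theorem \ref{1.4}. The one small imprecision is that the "companion cup-product theorem" (Theorem \ref{6.3}/Corollary \ref{6.5}) is actually the input for Theorem \ref{1.2}, not Theorem \ref{1.1}; for the solvability statement only $\b=-\tau$ is needed.
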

Let us make the groups mentioned above a bit more explicit. Let $\textrm{Ker}(\mathbf E)_{ab}$ denote the abelianization of $\textrm{Ker}(\mathbf E)$. Note that the natural action of $G_1$ on $\textrm{Ker}(\mathbf E)_{ab}$ via conjugation factors through $G_2$ hence $\textrm{Ker}(\mathbf E)_{ab}$ is naturally equipped with a $\Gamma$-action. Let $\textrm{\rm Ker}(\mathbf E)_{ab}^{\vee}$ denote its dual as a $\Gamma$-module. We will show that the group $\textrm{Br}(\mathbf E)$ sits in an exact sequence:
$$0\rightarrow H^1(\Gamma,\textrm{\rm Ker}(\mathbf E)_{ab}^{\vee})
\mathop{\longrightarrow}^{j_{\mathbf E}}\textrm{\rm Br}(\mathbf E)\rightarrow
H^2(\textrm{\rm Ker}(\mathbf E),\!\!\!\!\bigoplus_{p\neq\textrm{char}(F)}
\!\!\!\!\mathbb Q_p/\mathbb Z_p)^{\Gamma}
\rightarrow H^2(\Gamma,\textrm{\rm Ker}(\mathbf E)^{\vee}_{ab})$$
(see Lemma \ref{2.2}) and we will identify the subgroups $\textrm{Br}_1(\mathbf E)$ and $\B(\mathbf E)$ with the image of the group $ H^1(\Gamma,\textrm{\rm Ker}(\mathbf E)_{ab}^{\vee})$ and its Tate--Shafarevich subgroup $\Sh^1(F,\textrm{\rm Ker}(\mathbf E)_{ab}^{\vee})$ under the map $j_{\mathbf E}$, respectively (see Definition \ref{2.4} and Proposition \ref{2.7}).

In complete analogy with the situation for diophantine equations, we can also formulate a version of weak approximation, too. Obviously the map $r:\textrm{\rm Sol}(\mathbf E)\rightarrow\textrm{\rm Sol}_{\mathbb A}(\mathbf E)$ is never surjective unless $\psi:G_1\rightarrow G_2$ is a bijection. Nevertheless the situation changes if we take the Brauer-Manin obstruction into account.
\begin{thm}\label{1.2} Assume that $\textrm{\rm Ker}(\mathbf E)$ is abelian and $\textrm{\rm char}(F)$ does not divide the order of $\textrm{\rm Ker}(\mathbf E)$. Then we have:
$$r\big(\textrm{\rm Sol}(\mathbf E)\big)=
\textrm{\rm Sol}_{\mathbb A}^{\textrm{\rm Br}}(\mathbf E)=
\textrm{\rm Sol}_{\mathbb A}^{\textrm{\rm Br}_1}(\mathbf E).$$
\end{thm}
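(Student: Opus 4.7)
The inclusions $r(\textrm{Sol}(\mathbf E))\subseteq\textrm{Sol}_{\mathbb A}^{\textrm{Br}}(\mathbf E)\subseteq\textrm{Sol}_{\mathbb A}^{\textrm{Br}_1}(\mathbf E)$ are immediate: the first from Lemma \ref{3.6} and the second from the inclusion $\textrm{Br}_1(\mathbf E)\subseteq\textrm{Br}(\mathbf E)$. The content of the theorem is therefore the reverse inclusion $\textrm{Sol}_{\mathbb A}^{\textrm{Br}_1}(\mathbf E)\subseteq r(\textrm{Sol}(\mathbf E))$.

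Fix $(h_x)\in\textrm{Sol}_{\mathbb A}^{\textrm{Br}_1}(\mathbf E)$. By Theorem \ref{1.1} the set $\textrm{Sol}(\mathbf E)$ is non-empty, so I pick a base global solution $\widetilde\psi_0$. Because $\textrm{Ker}(\mathbf E)$ is abelian, the set $\textrm{Sol}(\mathbf E)$ (resp.\ $\textrm{Sol}(\mathbf E_x)$) is a principal homogeneous space under $H^1(\Gamma,\textrm{Ker}(\mathbf E)_{ab})$ (resp.\ under $H^1(\Gamma_x,\textrm{Ker}(\mathbf E)_{ab})$) via the twisting rule $c\cdot\widetilde\psi(\gamma)=c(\gamma)\widetilde\psi(\gamma)$, with $\Gamma$ acting on $\textrm{Ker}(\mathbf E)_{ab}$ through $\psi$. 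Since both $(h_x)$ and $r(\widetilde\psi_0)$ are unramified at almost all $x$, their difference is a well-defined class $\beta$ in the restricted direct product $\prod'_{x\in|F|}H^1(\Gamma_x,\textrm{Ker}(\mathbf E)_{ab})$ (with respect to the unramified subgroups). The theorem reduces to the assertion that $\beta$ lies in the image of the diagonal restriction map from $H^1(\Gamma,\textrm{Ker}(\mathbf E)_{ab})$.

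The key technical step is a compatibility between the Brauer--Manin pairing of Definition \ref{3.5} restricted to $\textrm{Br}_1(\mathbf E)$ and the sum of local Tate pairings: for every $\alpha\in H^1(\Gamma,\textrm{Ker}(\mathbf E)_{ab}^{\vee})$, I would establish the identity
$$\langle(h_x),j_{\mathbf E}(\alpha)\rangle=\sum_{x\in|F|}\langle\beta_x,\alpha|_{\Gamma_x}\rangle_{x},$$
where $\langle\cdot,\cdot\rangle_x$ denotes local Tate duality. This is essentially a cochain-level computation unwinding the construction of $j_{\mathbf E}$ from Lemma \ref{2.2} together with the new description of Tate duality promised in the abstract. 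Granting this identity, the hypothesis $(h_x)\in\textrm{Sol}_{\mathbb A}^{\textrm{Br}_1}(\mathbf E)$ translates directly into the vanishing of the sum on the right for every $\alpha$, i.e.\ into the statement that $\beta$ annihilates $H^1(\Gamma,\textrm{Ker}(\mathbf E)_{ab}^{\vee})$ under the sum of local Tate pairings.

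Setting $M=\textrm{Ker}(\mathbf E)_{ab}$, the Poitou--Tate nine-term exact sequence (applicable since $\textrm{char}(F)\nmid|M|$) identifies the image of $H^1(\Gamma,M)\to\prod'_xH^1(\Gamma_x,M)$ with exactly the annihilator of $H^1(\Gamma,M^{\vee})$ under Tate duality. Hence $\beta$ lifts to a global class $\widetilde\beta\in H^1(\Gamma,M)$, and the twisted solution $\widetilde\beta\cdot\widetilde\psi_0\in\textrm{Sol}(\mathbf E)$ satisfies $r(\widetilde\beta\cdot\widetilde\psi_0)=(h_x)$, completing the argument. The principal obstacle is the compatibility identity in the third paragraph: the Brauer--Manin pairing is defined through the cohomology of $\textrm{Ker}(\mathbf E)$ whereas local Tate pairings live in Galois cohomology, so the bridge requires a careful diagram chase using the explicit description of $j_{\mathbf E}$ from the exact sequence of Lemma \ref{2.2}.
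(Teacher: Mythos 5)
Your outline matches the paper's proof almost line for line: reduce to showing $\textrm{Sol}_{\mathbb A}^{\textrm{Br}_1}(\mathbf E)\subseteq r(\textrm{Sol}(\mathbf E))$, invoke Theorem \ref{1.1} to get a base global solution, twist by a base solution to identify $\textrm{Sol}(\mathbf E)$ and each $\textrm{Sol}(\mathbf E_x)$ with torsors under $H^1(\Gamma,\textrm{Ker}(\mathbf E))$ and $H^1(\Gamma_x,\textrm{Ker}(\mathbf E))$, show that the $\textrm{Br}_1$-pairing on ad\`elic solutions becomes the sum of local Tate pairings against $H^1(\Gamma,\textrm{Ker}(\mathbf E)^\vee)$, and close by the Poitou--Tate exactness in degree $1$ (in the paper this is packaged as Theorem \ref{5.4}). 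The commutative diagram (\ref{8.0.2}) and the Proposition immediately following the proof of Theorem \ref{1.2} are exactly your twisting bijection and your compatibility identity, respectively.

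The one point worth flagging: the ``compatibility identity'' you correctly isolate as the real work is not established by unwinding the description of Tate duality from Theorem \ref{1.4} (that result is deployed only for Theorem \ref{1.1}). In the paper it is proved using Corollary \ref{6.5}, the group-cohomological cup-product theorem of Section 6, which expresses the difference $s_1^*(c)-s_2^*(c)$ of pullbacks along two sections as $[s_1-s_2]\cup\delta(c)$. That step is genuinely nontrivial --- the paper devotes Section 6 (first a topological theorem about fibre bundles and then its profinite shadow) precisely to justify this identity --- so the ``careful diagram chase'' you defer is in fact the new theorem that powers the argument. With Corollary \ref{6.5} and Lemma \ref{3.6} substituted for the ``cochain-level computation'' you postpone, your proof becomes a faithful reproduction of the paper's.
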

 We will give two proofs for the both theorems, at least in characteristic zero. The first proof is geometric; given an embedding problem over field $F$ we construct a connected linear algebraic group $G$ over $F$ and a homogenous space $X=X(\mathbf E)$ under $G$ over $F$ such that there is a bijection between $X(F)/G(F)$ and $\textrm{\rm Sol}(\mathbf E)$ (see Theorem \ref{homo_space}). Then we apply a result of Borovoi (see \cite{Bo1}) on the Brauer-Manin obstruction for homogeneous spaces with abelian stabilizer. Similarly we can apply a result of Colliot-Th\'el\`ene--Xu (see \cite{CX}) on strong approximation to deduce Theorem \ref{1.2}. Note that homogeneous spaces were used before to study local-global principles for split embedding problems (see \cite{DLAN} and \cite{Ha}). 
 
Since our main results are about the structure of the absolute Galois group it is more natural to have proofs which remain in this context, and do not use methods of algebraic geometry. Our second proof, which also works in positive characteristic, is of this sort, in fact it is Galois-cohomological in nature. It also has the advantage that it lead us to two auxiliary results which are interesting on their own. One of them gives an elegant description of the Tate duality pairing in terms of the Brauer-Manin pairing which we will describe next. 
\begin{defn}\label{1.3} Let $F$ be again an arbitrary field with absolute Galois group $\Gamma$ and let $\mathbf E$ be an embedding problem given by the diagram (\ref{1.0.1}). Let $\Gamma(\mathbf E)$ denote the fibre product group:
$$\Gamma(\mathbf E)=\{(a,b)\in G_1\times\Gamma|\phi(a)=\psi(b)\}
\leq G_1\times\Gamma.$$
Then $\Gamma(\mathbf E)$ sits in the exact sequence:
\begin{equation}\label{1.3.1}
\CD1@>>>\textrm{Ker}(\mathbf E)@>i_{\mathbf E}>>\Gamma(\mathbf E)
@>\pi_{\mathbf E}>>\Gamma@>>>1\endCD
\end{equation}
where the map $i_{\mathbf E}$ is given by the rule $a\mapsto(a,1)$, and the homomorphism $\pi_{\mathbf E}$ is the restriction onto $\Gamma(\mathbf E)$ of the projection of $G_1\times\Gamma$ onto the second factor. The group $\Gamma(\mathbf E)$ also inherits a topology from the product topology on $G_1\times\Gamma$ which makes $\Gamma(\mathbf E)$ a profinite group and (\ref{1.3.1}) an exact sequence in the category of Hausdorff topological groups.
\end{defn}
Let $M$ be a discrete finite abelian $\Gamma$-module. For every embedding problem $\mathbf E$ over $F$ such that $\textrm{Ker}(\mathbf E)=M$ and the set $\textrm{Sol}_{\mathbb A}(\mathbf E)$ is non-empty let $c_{\mathbf E}\in H^2(F,M)=H^2(F,\textrm{Ker}(\mathbf E))$ denote the class of the extension (\ref{1.3.1}). Note that $c_{\mathbf E}\in\Sh^2(F,M)$ since we assumed that $\textrm{Sol}_{\mathbb A}(\mathbf E)$ is non-empty. Conversely for every $c\in \Sh^2(F,M)$ there is an embedding problem $\mathbf E$ as above such that $c=c_{\mathbf E}$. Let
\begin{equation}\label{1.3.2}
\b:\Sh^1(F,M^{\vee})\times\Sh^2(F,M)\rightarrow\mathbb Q/\mathbb Z
\end{equation}
be the unique pairing such that $\b(b,c_{\mathbf E})=\langle h,b\rangle$ for every $b\in \Sh^1(F,M^{\vee})$, for every $h\in\textrm{Sol}_{\mathbb A}(\mathbf E)$, and embedding problem $\mathbf E$ as above. Note that $\b_{\mathbf E}(b)=\langle h,b\rangle$ is independent of the choice of $h$, and since for every $b$ as above the value of $\b_{\mathbf E}(b)$ only depends on the isomorphism class of the embedding problem $\mathbf E$ (see Definition \ref{4.8} and the proof of Lemma \ref{4.6} below), the pairing $\b$ is well-defined. Assume now that $\textrm{\rm char}(F)$ does not divide the order of $M$ and let
$$\tau:\Sh^1(F,M^{\vee})\times\Sh^2(F,M)\longrightarrow
\mathbb Q/\mathbb Z$$
denote the Tate duality pairing.
\begin{thm}\label{1.4} We have $\b=-\tau$.
\end{thm}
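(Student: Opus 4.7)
The plan is to compute both pairings at the level of cochains and identify them up to sign.

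Fix an embedding problem $\mathbf E$ with kernel $M$ representing a given class $c_{\mathbf E} \in \Sh^2(F,M)$, and choose a set-theoretic section $s : \Gamma \to \Gamma(\mathbf E)$ of $\pi_{\mathbf E}$. This produces a normalised 2-cocycle $c : \Gamma^2 \to M$ representing $c_{\mathbf E}$ via $s(\gamma_1)s(\gamma_2) = c(\gamma_1,\gamma_2)\,s(\gamma_1\gamma_2)$, together with a unique decomposition $g = m(g)\,s(\pi_{\mathbf E}g)$ of each $g \in \Gamma(\mathbf E)$ with $m(g) \in M$. The first task is to realise the map $j_{\mathbf E}$ of the exact sequence following Theorem \ref{1.2} at the cochain level: I would trace through the Hochschild--Serre spectral sequence of (\ref{1.3.1}) and exhibit, for each cocycle $b \in Z^1(\Gamma, M^\vee)$, an explicit 2-cocycle $\tilde b$ on $\Gamma(\mathbf E)$ representing $j_{\mathbf E}(b)$. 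Morally $\tilde b(g_1,g_2) = \langle b(\pi_{\mathbf E}g_1),\, \pi_{\mathbf E}(g_1)\,m(g_2)\rangle$, although a correction term involving $c$ may be needed to make this a genuine 2-cocycle.

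Next, an adelic solution $h = (h_x)$ corresponds, at each place, to a continuous homomorphic section $\sigma_x : \Gamma_x \to \Gamma(\mathbf E_x)$ of the local extension, which may be written as $\sigma_x(\gamma) = \alpha_x(\gamma)\,s(\iota_x \gamma)$ for a continuous 1-cochain $\alpha_x : \Gamma_x \to M$. The section condition $\sigma_x(\gamma_1\gamma_2) = \sigma_x(\gamma_1)\sigma_x(\gamma_2)$ forces $d\alpha_x = c|_{\Gamma_x}$, so $\alpha_x$ is a concrete local cochain trivialisation of $c_{\mathbf E}$. Pulling $\tilde b$ back along $\sigma_x$ yields a 2-cocycle on $\Gamma_x$ whose class in $\textrm{Br}(F_x)=H^2(\Gamma_x,\mathbb{Q}/\mathbb{Z})$ is the local cup product $b|_{\Gamma_x} \cup \alpha_x$, and summing local invariants gives
$$\b(b,c_{\mathbf E}) \;=\; \langle h, j_{\mathbf E}(b)\rangle \;=\; \sum_{x \in |F|} \textrm{inv}_x\bigl(b|_{\Gamma_x} \cup \alpha_x\bigr).$$

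Finally, I would recognise this sum as $-\tau(b,c_{\mathbf E})$. The standard cochain construction of the Poitou--Tate pairing on $\Sh^1 \times \Sh^2$ uses exactly the same data: local $0$-cochain lifts $\beta_x \in M^\vee$ of $b|_{\Gamma_x}$ together with the 1-cochain lifts $\alpha_x$ of $c|_{\Gamma_x}$ above. The combination $\beta_x \cup c_x - b|_{\Gamma_x} \cup \alpha_x$ is a 2-cocycle at each place, and the sum of its local invariants computes $\tau(b,c_{\mathbf E})$ up to a universal sign; the surplus term $\sum_x \textrm{inv}_x(\beta_x \cup c_x)$ vanishes by a reciprocity argument, since the $\beta_x$ are local lifts of a single global cocycle $b$. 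The main obstacle is the first step: writing down $j_{\mathbf E}(b)$ cleanly at the cochain level from the spectral sequence, and then tracking all the sign and $\Gamma$-action conventions carefully enough to land on $\b = -\tau$ rather than $\b = +\tau$. This bookkeeping appears to be exactly the purpose of the new cup-product theorem announced in the abstract, and once in place the comparison of the two sums is essentially formal.
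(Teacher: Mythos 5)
Your strategy is sound but genuinely different from the paper's. Section 7 of the paper repackages $H^2(\Gamma(\mathbf{E}), \overline F^*)$ as the hypercohomology $\HH^2(\Gamma, \mathbb{Z}B(\mathbf{E})_*^{\vee})$ of a complex built from the classifying space of $\textrm{Ker}(\mathbf{E})$ (Lemmas \ref{7.8a}--\ref{7.10}), invokes a Poitou--Tate duality for \emph{complexes} (Theorem \ref{l:Sha}, quoted from Jossen's thesis), and establishes a clean cup-product formula $b \cup [s] = s^*(b)$ in that setting (Lemma \ref{7.15}); the final comparison, Proposition \ref{7.20}, is a Milne-style cochain computation carried out over the bar resolution with values in $\tau_{>0}(\mathbb Z B(\mathbf{E})_*)$ and its dual. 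You instead propose to work directly with the group extension (\ref{1.3.1}), the Hochschild--Serre spectral sequence, and the classical Poitou--Tate pairing for modules. This is more elementary --- it bypasses the need for duality for complexes --- but it concentrates all the weight onto the explicit cocycle formula for $j_{\mathbf{E}}(b)$, exactly as you say. One small misattribution: the ``new cup-product theorem'' (Theorem \ref{6.3}, Corollary \ref{6.5}) is used only in the proof of Theorem \ref{1.2}, not Theorem \ref{1.4}; the latter rests on Theorem \ref{l:Sha} and the classifying-element formalism.

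Two points in your sketch need tightening, and both feed the final sign. First, writing $\sigma_x(\gamma) = \alpha_x(\gamma)\, s(\iota_x\gamma)$, the homomorphism property of $\sigma_x$ actually forces $d\alpha_x = -c|_{\Gamma_x}$ rather than $+c|_{\Gamma_x}$ with the usual coboundary conventions; since the entire theorem is a sign, this cannot be left to chance. Second, your displayed identity $\b(b,c_{\mathbf E}) = \sum_x \textrm{inv}_x(b|_{\Gamma_x}\cup\alpha_x)$ is not literally correct: the naive 2-cochain $\widetilde{b}(g_1,g_2) = \langle b(\pi_{\mathbf{E}} g_1), \pi_{\mathbf{E}}(g_1)\cdot m(g_2)\rangle$ satisfies $d\widetilde{b} = \pi_{\mathbf{E}}^*(b \cup c)$, so a genuine cocycle representative of $j_{\mathbf E}(b)$ is $\widetilde{b} - \pi_{\mathbf{E}}^*\gamma$ where $\gamma$ is a global $2$-cochain with $d\gamma = b\cup c$ (which exists because $H^3(\Gamma,\overline F^*)=0$). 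Pulling back along $\sigma_x$ gives the \emph{cocycle} $b|_{\Gamma_x}\cup\alpha_x - \gamma|_{\Gamma_x}$, and hence $\b(b,c_{\mathbf E}) = \sum_x\textrm{inv}_x(b|_{\Gamma_x}\cup\alpha_x-\gamma|_{\Gamma_x})$; the summand $b|_{\Gamma_x}\cup\alpha_x$ alone is not a cocycle and has no invariant. The global $\gamma$ is the same one that enters Milne's cochain formula for $\tau(b,c_{\mathbf E})$, and the comparison then reduces to the observation that the difference of the two local cocycles is $d(\beta_x\cup\alpha_x)$ for local $0$-cochain lifts $\beta_x$ of $b|_{\Gamma_x}$, hence a coboundary of zero invariant. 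Once these corrections are in, your route does go through and amounts to a derived-category-free version of the paper's Proposition \ref{7.20}.
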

In the paper \cite{HaSz} Harari and Szamuely gave a geometric interpretation of the duality pairing. Our result offers another geometric interpretation by relating it to the Brauer--Manin pairing. This result is the key ingredient of the cohomological proof of Theorem \ref{1.1}. The other auxiliary result which we mentioned above is a result on the cup product (Theorem \ref{6.3}) which plays a central role in the cohomological proof of Theorem \ref{1.2} through its group theoretical counterpart (Corollary \ref{6.5}). In the last chapter we also present an example (Example \ref{the_example}) which shows that the algebraic Brauer--Manin obstruction is not sufficient for weak approximation for embedding problems with non-abelian kernel. However it remains a very interesting challenge to construct an embedding problem where the failure of the local-global principle cannot be explained by the Brauer-Manin obstruction.
\begin{cont} In the next chapter we will define the Brauer group of embedding problems and study its structure. We will introduce the analogue of the Brauer-Manin obstruction in the third chapter. In the fourth chapter we show that the analogue of the algebraic Brauer-Manin obstruction is equivalent to the analogue of the abelian descent obstruction. This result and the closely related Theorem \ref{4.9} are relatively straightforward consequences of Theorems \ref{1.1} and \ref{1.2} once a suitable formalism is set up. We prove a local-global principle for cohomology classes using Poitou-Tate duality in the fifth chapter. In the sixth chapter we prove a theorem on the cup product in topology which we use to deduce a similar statement in group cohomology. We compare the Tate duality pairing with the Brauer--Manin paring in the seventh chapter. With the help of these results we prove Theorems \ref{1.1} and \ref{1.2} in the eighth chapter. We present a geometric construction and use it to give another proof of Theorem \ref{1.1} in the ninth chapter. In the last chapter we present a counter-example to weak approximation not explained by the algebraic Brauer-Manin obstruction.
\end{cont} 
\begin{ackn} The first author was partially supported by the EPSRC grants P19164 and P36794. The second author was partially supported by a Clore Fellowship.
\end{ackn} 

\section{The Brauer group of embedding problems}

\begin{defn}\label{2.1} Via the homomorphism $\pi_{\mathbf E}$ we may consider every discrete $\Gamma$-module $M$ a discrete $\Gamma(\mathbf E)$-module, too, which will be denoted also by $M$ by slight abuse of notation. For every $n\in\mathbb N$ and abelian group $M$ let $M[n]$ denote the $n$-torsion of $M$ and for every such $M$ let $M^{ct}$ denote the quotient of $M$ by its torsion. Let $\textrm{\textrm{Br}}(\mathbf E)$ denote the cokernel of the homomorphism
$$\pi_{\mathbf E}^*:H^2(\Gamma,\overline F^*)\rightarrow H^2(\Gamma(\mathbf E),\overline F^*).$$
For every finite discrete abelian $\Gamma$-module $M$ let $M^{\vee}$ denote the dual of $M$:
$$M^{\vee}=\textrm{Hom}(M,\overline F^*).$$
\end{defn}
Assume now that $F$ is either a local or a global field and continue to use the notation which we introduced above.
\begin{lemma}\label{2.2} We have the following short exact sequence:
$$0\rightarrow H^1(\Gamma,\textrm{\rm Ker}(\mathbf E)_{ab}^{\vee})
\mathop{\longrightarrow}^{j_{\mathbf E}}\textrm{\rm Br}(\mathbf E)\rightarrow
H^2(\textrm{\rm Ker}(\mathbf E),\!\!\!\!\bigoplus_{p\neq\textrm{\rm char}(F)}
\!\!\!\!\mathbb Q_p/\mathbb Z_p)^{\Gamma}
\rightarrow H^2(\Gamma,\textrm{\rm Ker}(\mathbf E)^{\vee}_{ab}),$$
where we equip $\bigoplus_{p\neq\textrm{\rm char}(F)}\mathbb Q_p/\mathbb Z_p$ with the trivial $\textrm{\rm Ker}(\mathbf E)$-action.
\end{lemma}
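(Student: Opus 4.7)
The approach is to apply the Hochschild--Serre spectral sequence
\[
E_2^{p,q} = H^p(\Gamma, H^q(K, \overline F^*)) \Rightarrow H^{p+q}(\Gamma(\mathbf E), \overline F^*)
\]
(writing $K := \textrm{Ker}(\mathbf E)$) attached to the group extension (\ref{1.3.1}). Since $\pi_{\mathbf E}$ sends $K$ to $1$, the induced action of $K$ on $\overline F^*$ is trivial, giving immediately $H^0(K, \overline F^*) = \overline F^*$ and $H^1(K, \overline F^*) = \Hom(K_{ab}, \overline F^*) = K_{ab}^{\vee}$ as a $\Gamma$-module, with the $\Gamma$-action on $K_{ab}$ coming from conjugation inside $\Gamma(\mathbf E)$ (well-defined because inner automorphisms of $K$ act trivially on $K_{ab}$). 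For $H^2(K, \overline F^*)$, I would use that $\overline F^*/\mu(\overline F)$ is uniquely divisible by every integer prime to $\textrm{char}(F)$ and hence cohomologically trivial for the finite group $K$, so $H^2(K, \overline F^*) \cong H^2(K, \mu(\overline F))$; the abstract isomorphism $\mu(\overline F) \cong \bigoplus_{p \ne \textrm{char}(F)} \mathbb Q_p/\mathbb Z_p$ of abelian groups then supplies the coefficient module featured in the lemma's third term.

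The image of $\pi_{\mathbf E}^* \colon H^2(\Gamma, \overline F^*) \to H^2(\Gamma(\mathbf E), \overline F^*)$ is exactly the bottom filtration piece $F^2 = E_\infty^{2,0}$ of the Hochschild--Serre filtration. Consequently the induced two-step filtration on $\textrm{Br}(\mathbf E) = H^2(\Gamma(\mathbf E), \overline F^*)/F^2$ produces a short exact sequence
\[
0 \to E_\infty^{1,1} \to \textrm{Br}(\mathbf E) \to E_\infty^{0,2} \to 0,
\]
with $j_{\mathbf E}$ defined as the inclusion of $E_\infty^{1,1}$, the third map of the lemma arising as the composition $\textrm{Br}(\mathbf E) \twoheadrightarrow E_\infty^{0,2} \hookrightarrow E_2^{0,2} = H^2(K, \overline F^*)^{\Gamma}$, and the fourth map being the differential $d_2^{0,2} \colon H^2(K, \overline F^*)^{\Gamma} \to E_2^{2,1} = H^2(\Gamma, K_{ab}^{\vee})$. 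Exactness at the third term is then tautological once one knows $E_\infty^{0,2} = \ker(d_2^{0,2})$.

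The crux is therefore to verify $E_\infty^{1,1} = H^1(\Gamma, K_{ab}^{\vee})$ (needing the transgression $d_2^{1,1} \colon H^1(\Gamma, K_{ab}^{\vee}) \to H^3(\Gamma, \overline F^*)$ to vanish) and $E_\infty^{0,2} = \ker(d_2^{0,2})$ (needing all higher differentials $d_r^{0,2}$ for $r \ge 3$ to be trivial); I expect this to be the main obstacle. The transgression $d_2^{1,1}$ admits an explicit description as cup product with the abelianized extension class $c_{ab} \in H^2(\Gamma, K_{ab})$ of the induced sequence $1 \to K_{ab} \to \Gamma(\mathbf E)/[K,K] \to \Gamma \to 1$, contracted along the evaluation pairing $K_{ab}^{\vee} \otimes K_{ab} \to \overline F^*$; its vanishing would be established by a cocycle-level manipulation exploiting Hilbert~90 ($H^1(\Gamma, \overline F^*) = 0$) to kill certain coboundaries. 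The higher $d_r^{0,2}$ vanishings would follow from cohomological dimension considerations together with the structural properties of $\overline F^*$ as a $\Gamma$-module for local or global $F$. Combining these vanishings with the short exact sequence above and splicing in the defining inclusion $\ker(d_2^{0,2}) \hookrightarrow H^2(K, \overline F^*)^{\Gamma}$ would then yield the four-term exact sequence of the lemma.
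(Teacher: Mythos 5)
Your spectral sequence setup is exactly the paper's, and the reduction to showing that $d_2^{1,1}$ vanishes and that $E_\infty^{0,2}=\ker(d_2^{0,2})$ is correct. You also correctly identify the target of the transgression as $H^3(\Gamma,\overline F^*)$. But the argument you then sketch for these vanishings does not work and overlooks the one fact that makes the whole lemma go through cleanly: for $F$ local or global, \emph{the group $H^3(\Gamma,\overline F^*)$ is itself zero} (Serre, \emph{Galois Cohomology}, II.\S5.3 Prop.~15 for the local case; Milne, \emph{Arithmetic Duality Theorems}, I Cor.~4.21 for the global case). Once this is in hand, $d_2^{1,1}$ vanishes because its codomain $E_2^{3,0}=H^3(\Gamma,\overline F^*)$ is zero, and likewise $d_3^{0,2}$ has target a subquotient of $E_2^{3,0}=0$, while $d_r^{0,2}$ for $r\geq 4$ vanishes for degree reasons; no further input is needed.

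The route you propose instead --- describing $d_2^{1,1}$ as a cup product against the abelianized extension class and then invoking Hilbert~90 to ``kill certain coboundaries'' --- cannot succeed as stated. Hilbert~90 gives $H^1(\Gamma,\overline F^*)=0$, which is a statement about degree $1$, whereas the transgression lands in degree $3$; knowing that $H^1$ vanishes does not control a map into $H^3$, and the cup-product description by itself only says the transgression factors through a pairing, not that it is zero. Similarly, ``cohomological dimension considerations'' alone do not settle the global case: for number fields $\Gamma$ does not have cohomological dimension $2$ in general (real places contribute $2$-torsion in arbitrarily high degree), and the vanishing of $H^3(\Gamma,\overline F^*)$ is a genuine theorem about the module $\overline F^*$, not a soft dimension bound. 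So the gap is not in your framework but in the crucial input: you need the specific vanishing $H^3(\Gamma,\overline F^*)=0$ for the relevant class of fields, and that is what should replace the Hilbert~90/dimension sketch.

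Two minor remarks: your identification $H^2(K,\overline F^*)\cong H^2(K,\mu(\overline F))$ via unique prime-to-$\mathrm{char}(F)$ divisibility of $\overline F^*/\mu(\overline F)$ is actually slightly more careful than the paper's phrasing and is fine. Also note that the quotient map $\textrm{Br}(\mathbf E)\twoheadrightarrow E_\infty^{0,2}$ composed with $E_\infty^{0,2}\hookrightarrow E_2^{0,2}$ is exactly how the third arrow of the lemma arises, so exactness at the third term really is ``tautological'' as you say once $E_\infty^{0,2}=\ker(d_2^{0,2})$ is known --- but establishing that equality is precisely where the missing $H^3$-vanishing is needed.
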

\begin{proof} The Hochschild-Serre spectral sequence:
\begin{equation}\label{2.2.1}
E^2_{p,q}=H^p(\Gamma,H^q(\textrm{Ker}(\mathbf E),\overline F^*))\Rightarrow
H^{p+q}(\Gamma(\mathbf E),\overline F^*)
\end{equation}
furnishes on $H^2(\Gamma(\mathbf E),\overline F^*)$ a filtration:
\begin{equation}\label{2.2.2}
0=E^2_3\subseteq E^2_2
\subseteq E^2_1\subseteq
E^2_0=H^2(\Gamma(\mathbf E),\overline F^*)
\end{equation}
such that
$$E^{\infty}_{p,2-p}\cong E^2_p/E^2_{p+1},\quad p=0,1,2.$$
Because $H^3(\Gamma,\overline F^*)=0$ (see Proposition 15 of \cite{Se} on page 93 when $F$ is a local field, and see Corollary 4.21 of \cite{Mi}, page 80 when $F$ is a global field), the coboundary map:
$$d^2_{1,1}:E^2_{1,1}=
H^1(\Gamma,H^1(\textrm{\rm Ker}(\mathbf E),\overline F^*))=
H^1(\Gamma,\textrm{\rm Ker}(\mathbf E)^{\vee}_{ab})
\rightarrow
H^3(\Gamma,\overline F^*)$$
is zero and therefore:
$$E^{\infty}_{1,1}=E^3_{1,1}=\textrm{Ker}(d^2_{1,1})=H^1(\Gamma,\textrm{\rm Ker}(\mathbf E)^{\vee}_{ab}).$$
We have the following short exact sequence of trivial Ker$(\mathbf E)$-modules:
$$0\longrightarrow\bigoplus_{p\neq\textrm{\rm char}(F)}\mathbb Q_p/\mathbb Z_p\longrightarrow^{\!\!\!\!\!\!\!\!\!u}\ \ \overline F^*
\longrightarrow(\overline F^*)^{ct}\longrightarrow0$$
where the image of the map $u$ is the module of roots of unity. The Ker$(\mathbf E)$-module $(\overline F^*)^{ct}$ is a vector space over $\mathbb Q$ so its higher cohomology groups vanish. Hence $H^k(\textrm{Ker}(\mathbf E),\bigoplus_{p\neq\textrm{\rm char}(F)}\mathbb Q_p/\mathbb Z_p)=H^k(\textrm{Ker}(\mathbf E),\overline F^*)$ for every positive integer $k$. Therefore
$$E^{\infty}_{0,2}=E^2_{0,2}=\textrm{Ker}(d^2_{0,2}),$$
where the coboundary map $d^2_{0,2}$ is a homomorphism:
$$E^2_{0,2}=H^2(\textrm{\rm Ker}(\mathbf E),\!\!\!\!
\bigoplus_{p\neq\textrm{\rm char}(F)}
\!\!\!\!\mathbb Q_p/\mathbb Z_p)^{\Gamma}
\rightarrow H^2(\Gamma,H^1(\textrm{\rm Ker}(\mathbf E),\overline F^*))=
H^2(\Gamma,\textrm{\rm Ker}(\mathbf E)^{\vee}_{ab}).$$
Because
$$E^{\infty}_{2,0}=E^3_{2,0}=\pi_{\mathbf E}^*(H^2(\Gamma,\overline F^*))
\subseteq H^2(\Gamma(\mathbf E),\overline F^*),$$
the claim is now clear.
\end{proof}
Assume now that $F$ is a global field. Note that for every $x \in |F|$ the following diagram:
$$\xymatrix{
H^2(\Gamma,\overline F^*)\ar[r]^{\iota_x^*}\ar[d]^{\pi_{\mathbf E}^*}
& H^2(\Gamma_x,\overline F^*)\ar[r]^{(\eta_x)_*}\ar[d]^{\pi_{\mathbf E_x}^*}
& H^2(\Gamma_x,\overline F_x^*)\ar[d]^{\pi_{\mathbf E_x}^*}\\
H^2(\Gamma(\mathbf E),\overline F^*)\ar[r]^{(\textrm{id}_{G_1}\times\iota_x)^*}
& H^2(\Gamma_x(\mathbf E_x),\overline F^*)\ar[r]^{(\eta_x)_*}&
H^2(\Gamma_x(\mathbf E_x),\overline F_x^*)}$$
is commutative, where the maps on the left are restriction maps in group cohomology, and hence it gives rise to a map:
$$j_x:\textrm{Br}(\mathbf E)\longrightarrow\textrm{Br}(\mathbf E_x).$$
\begin{defn}\label{2.3} Let $\B(\mathbf E)$ denote the intersection:
$$\B(\mathbf E)=\bigcap_{x\in|F|}\textrm{Ker}(j_x)\leq\textrm{Br}(\mathbf E).$$
Let $\textrm{Br}_1(\mathbf E)$ denote the kernel of the homomorphism:
$$\textrm{\rm Br}(\mathbf E)\longrightarrow
H^2(\textrm{\rm Ker}(\mathbf E),\bigoplus_{p\neq\textrm{\rm char}(F)}\mathbb Q_p/\mathbb Z_p)^{\Gamma}.$$
\end{defn}
\begin{lemma}\label{2.4} The group $\B(\mathbf E)$ is a subgroup of $\textrm{\rm Br}_1(\mathbf E)$.
\end{lemma}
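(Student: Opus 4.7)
The plan is to show that for any $\alpha \in \B(\mathbf E)$, the image $e(\alpha)$ under the edge map
$$e : \textrm{Br}(\mathbf E) \longrightarrow H^2(\textrm{Ker}(\mathbf E), \bigoplus_{p \neq \textrm{char}(F)} \mathbb Q_p/\mathbb Z_p)^{\Gamma}$$
supplied by Lemma \ref{2.2} vanishes, since by Definition \ref{2.3} that is precisely the condition $\alpha \in \textrm{Br}_1(\mathbf E)$. The strategy is to compare $e$ with the local version $e_x$ of the same edge map obtained by applying Lemma \ref{2.2} to $F_x$ and $\mathbf E_x$, and use the vanishing of $j_x(\alpha)$ at every place to force vanishing of $e(\alpha)$.

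First I would construct, for each $x \in |F|$, a commutative square whose left column is $j_x : \textrm{Br}(\mathbf E) \to \textrm{Br}(\mathbf E_x)$, whose top and bottom rows are $e$ and $e_x$ respectively, and whose right column is the natural map
$$H^2(\textrm{Ker}(\mathbf E), \bigoplus_{p \neq \textrm{char}(F)} \mathbb Q_p/\mathbb Z_p)^{\Gamma} \longrightarrow H^2(\textrm{Ker}(\mathbf E_x), \bigoplus_{p \neq \textrm{char}(F)} \mathbb Q_p/\mathbb Z_p)^{\Gamma_x}.$$
Such a square exists because the short exact sequence (\ref{1.3.1}) and its local counterpart fit into a morphism of extensions (the identity on $\textrm{Ker}(\mathbf E_x)=\textrm{Ker}(\mathbf E)$, the homomorphism $\iota_x$ on $\Gamma_x$, and the restriction of $\textrm{id}_{G_1}\times\iota_x$ on the middle terms), which together with the coefficient map $\overline F^* \to \overline F_x^*$ induced by $\eta_x$ gives a morphism of Hochschild-Serre spectral sequences (\ref{2.2.1}). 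The square is the induced map on the relevant edge piece of the filtration, passing to cokernels as in Definition \ref{2.1}.

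Next I would observe that the right vertical map is injective. Since $\textrm{Ker}(\mathbf E)=\textrm{Ker}(\mathbf E_x)$ as abstract groups and $\eta_x$ identifies the torsion subgroups of $\overline F^*$ and $\overline F_x^*$ (both canonically $\bigoplus_{p\neq \textrm{char}(F)}\mathbb Q_p/\mathbb Z_p$ as roots of unity), the map in question is the natural inclusion $A^{\Gamma}\hookrightarrow A^{\Gamma_x}$ on invariants of the common abelian group $A = H^2(\textrm{Ker}(\mathbf E),\bigoplus_{p\neq \textrm{char}(F)}\mathbb Q_p/\mathbb Z_p)$ along $\iota_x:\Gamma_x\hookrightarrow \Gamma$, and is therefore injective. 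To conclude, pick any place $x$: for $\alpha \in \B(\mathbf E)$ we have $j_x(\alpha)=0$, so $e_x(j_x(\alpha))=0$, and by commutativity the image of $e(\alpha)$ in $A^{\Gamma_x}$ vanishes, whence $e(\alpha)=0$ by injectivity, giving $\alpha\in \textrm{Br}_1(\mathbf E)$.

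The only slightly delicate step is the construction of the commutative square, which reduces to a standard but somewhat tedious naturality check for the Hochschild-Serre spectral sequence under a morphism of group extensions combined with a compatible change of coefficients; no substantive obstacle is anticipated.
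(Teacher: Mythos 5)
Your proof is correct and follows essentially the same route as the paper: both set up the naturality of the Hochschild--Serre spectral sequence under the morphism of extensions and coefficient change, identify the induced map on the $E_2^{0,2}$ edge quotients with the inclusion of $\Gamma$-invariants into $\Gamma_x$-invariants, and conclude from its injectivity. The only cosmetic difference is that you phrase the argument via a commutative square of edge maps on the Brauer groups themselves, while the paper works directly with the filtration pieces $E^2_p$ and $F^2_p$; these amount to the same thing.
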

\begin{proof} As we saw in the proof above the Hochschild-Serre spectral sequence
$$F^2_{p,q}=H^p(\Gamma_x,H^q(\textrm{Ker}(\mathbf E),\overline F_x^*))\Rightarrow
H^{p+q}(\Gamma_x(\mathbf E_x),\overline F_x^*),$$
furnishes on $H^2(\Gamma_x(\mathbf E_x),\overline F_x^*)$ a filtration:
\begin{equation}\label{2.4.1}
0=F^2_3\subseteq F^2_2
\subseteq F^2_1\subseteq
F^2_0=H^2(\Gamma_x(\mathbf E_x),\overline F_x^*)
\end{equation}
such that
$$F^{\infty}_{p,2-p}\cong F^2_p/F^2_{p+1},\quad p=0,1,2.$$
The homomorphism $(\textrm{id}_{G_1}\times\iota_x)^*\circ(\eta_x)_*:
H^2(\Gamma(\mathbf E),\overline F^*)
\rightarrow H^2(\Gamma_x(\mathbf E_x),\overline F_x^*)$ respects the filtrations (\ref{2.2.2}) and (\ref{2.4.1}) in the sense that $(\textrm{id}_{G_1}\times\iota_x)^*\circ(\eta_x)_*(E^2_p)\subseteq F^2_p$ for every $p=0,1,2$. Moreover the homomorphism:
$$E^{\infty}_{0,2}\cong E^2_0/E^2_1
\longrightarrow F^2_0/F^2_1\cong F^{\infty}_{0,2}$$
induced by $(\textrm{id}_{G_1}\times\iota_x)^*\circ(\eta_x)_*$ is the restriction of the map:
\begin{equation}\label{2.4.2}
H^2(\textrm{\rm Ker}(\mathbf E),
\!\!\!\!\bigoplus_{p\neq\textrm{\rm char}(F)}
\!\!\!\!\mathbb Q_p/\mathbb Z_p)^{\Gamma}
\longrightarrow H^2(\textrm{\rm Ker}(\mathbf E),
\!\!\!\!\bigoplus_{p\neq\textrm{\rm char}(F)}
\!\!\!\!\mathbb Q_p/\mathbb Z_p
)^{\Gamma_x}
\end{equation}
onto the kernel of the homomorphism
$$d^2_{0,2}:H^2(\textrm{\rm Ker}(\mathbf E),\!\!\!\!\bigoplus_{p\neq\textrm{\rm char}(F)}
\!\!\!\!\mathbb Q_p/\mathbb Z_p)^{\Gamma}
\rightarrow
H^2(\Gamma,\textrm{\rm Ker}(\mathbf E)^{\vee}).$$
Because the homomorphism in (\ref{2.4.2}) is injective, the claim is now clear.
\end{proof}
\begin{lemma}\label{2.5} The group $\textrm{\rm Br}_1(\mathbf E)$ is annihilated by $|\textrm{\rm Ker}(\mathbf E)|$. The group $\textrm{\rm Br}(\mathbf E)$ is annihilated by $|\textrm{\rm Ker}(\mathbf E)|^2$.
\end{lemma}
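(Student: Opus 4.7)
The plan is to exploit the exact sequence of Lemma \ref{2.2} together with the standard fact that, for a finite group $K$ of order $n$, the cohomology groups $H^i(K,M)$ are annihilated by $n$ for all $i \geq 1$ and all $K$-modules $M$. Write $n = |\textrm{Ker}(\mathbf E)|$ for brevity.

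For the first assertion, the exact sequence of Lemma \ref{2.2} identifies $\textrm{Br}_1(\mathbf E)$, defined as the kernel of the map into $H^2(\textrm{Ker}(\mathbf E),\bigoplus_{p\neq\textrm{char}(F)}\mathbb Q_p/\mathbb Z_p)^{\Gamma}$, with the image of $j_{\mathbf E}$, and hence with $H^1(\Gamma,\textrm{Ker}(\mathbf E)_{ab}^{\vee})$ since $j_{\mathbf E}$ is injective. Now $\textrm{Ker}(\mathbf E)_{ab}$ is a finite abelian group whose order divides $n$, so every character $\chi\in\textrm{Ker}(\mathbf E)_{ab}^{\vee}=\textrm{Hom}(\textrm{Ker}(\mathbf E)_{ab},\overline F^*)$ satisfies $\chi^n=1$; that is, the $\Gamma$-module $\textrm{Ker}(\mathbf E)_{ab}^{\vee}$ is annihilated by $n$. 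Therefore so is its $H^1$, which gives the first claim.

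For the second assertion, I would work directly with the filtration (\ref{2.2.2}) on $H^2(\Gamma(\mathbf E),\overline F^*)$ coming from (\ref{2.2.1}). Since the edge map $H^2(\Gamma,\overline F^*)\to H^2(\Gamma(\mathbf E),\overline F^*)$ is inflation, its image equals $E^{\infty}_{2,0}=E^2_2$ in the notation of the proof of Lemma \ref{2.2}. Thus
\[
\textrm{Br}(\mathbf E)=H^2(\Gamma(\mathbf E),\overline F^*)/E^2_2=E^2_0/E^2_2,
\]
which is an extension of $E^2_0/E^2_1\cong E^{\infty}_{0,2}$ by $E^2_1/E^2_2\cong E^{\infty}_{1,1}$. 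The first of these is a subquotient of $H^2(\textrm{Ker}(\mathbf E),\overline F^*)^{\Gamma}$, which is annihilated by $n$ by the general fact recalled above applied to the finite group $\textrm{Ker}(\mathbf E)$. The second is a subquotient of $H^1(\Gamma,\textrm{Ker}(\mathbf E)_{ab}^{\vee})$, which is annihilated by $n$ by the first part of the proof. An extension of an $n$-torsion group by an $n$-torsion group is annihilated by $n^2$, so $\textrm{Br}(\mathbf E)$ is annihilated by $n^2$, as required.

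There is no real obstacle here; the only point that warrants a sentence of justification is the identification of the image of $\pi_{\mathbf E}^*$ with $E^2_2$ (so that $\textrm{Br}(\mathbf E)$ really is $E^2_0/E^2_2$, not a further quotient), which is immediate from how the edge homomorphism sits inside the Hochschild--Serre filtration.
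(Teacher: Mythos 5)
Your proof is correct and follows essentially the same approach as the paper: identify $\textrm{Br}_1(\mathbf E)$ with $H^1(\Gamma,\textrm{Ker}(\mathbf E)_{ab}^{\vee})$ (annihilated by $n$ since the module is $n$-torsion), and observe that $\textrm{Br}(\mathbf E)/\textrm{Br}_1(\mathbf E)$ injects into $H^2(\textrm{Ker}(\mathbf E),\cdot)$, which is annihilated by $n$. The paper phrases the second step directly in terms of the quotient $\textrm{Br}(\mathbf E)/\textrm{Br}_1(\mathbf E)$ rather than re-deriving the two-step filtration, but the content is identical.
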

\begin{proof} The group $\textrm{\rm Br}_1(\mathbf E)$ is isomorphic to the cohomology group $H^1(\Gamma,\textrm{\rm Ker}(\mathbf E)_{ab}^{\vee})$ which is annihilated by $|\textrm{\rm Ker}(\mathbf E)|$ since the $\Gamma$-module $\textrm{\rm Ker}(\mathbf E)_{ab}^{\vee}$ is annihilated by by $|\textrm{\rm Ker}(\mathbf E)|$. For the second claim it will be enough to show that the quotient $\textrm{\rm Br}(\mathbf E)/\textrm{\rm Br}_1(\mathbf E)$ is annihilated by $|\textrm{\rm Ker}(\mathbf E)|$. This group is isomorphic to a subgroup of $H^2(\textrm{\rm Ker}(\mathbf E),\bigoplus_{p\neq\textrm{\rm char}(F)}\mathbb Q_p/\mathbb Z_p)$. For every $\textrm{\rm Ker}(\mathbf E)$-module $M$ the cohomology group $H^2(\textrm{\rm Ker}(\mathbf E),M)$ is annihilated by the order of the finite group $\textrm{\rm Ker}(\mathbf E)$. The claim is now clear.
\end{proof}
\begin{notn} For every $k\in\mathbb N$ and for finite discrete $\Gamma$-module $M$ let $\Sh^k(F,M)$ denote the subgroup:
$$\Sh^k(F,M)=\textrm{Ker}\left(
\prod_{x\in|F|}\iota_x^*:
H^k(\Gamma,M)\rightarrow
H^k(\Gamma_x,M)\right)\leq H^k(\Gamma,M).$$
\end{notn}
\begin{prop}\label{2.7} We have:
$$\B(\mathbf E)=\Sh^1(F,\textrm{\rm Ker}(\mathbf E)^{\vee}_{ab}).$$
\end{prop}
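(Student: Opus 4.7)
The plan is to identify $\B(\mathbf{E})$ with $\Sh^1(F, \textrm{Ker}(\mathbf{E})^\vee_{ab})$ via the injection $j_{\mathbf{E}}$ of Lemma \ref{2.2}. Since Lemma \ref{2.4} already shows $\B(\mathbf{E}) \subseteq \textrm{Br}_1(\mathbf{E})$, and $j_{\mathbf{E}}$ identifies $\textrm{Br}_1(\mathbf{E})$ with $H^1(\Gamma, \textrm{Ker}(\mathbf{E})^\vee_{ab})$, it suffices to show that under this identification, an element $\alpha = j_{\mathbf{E}}(\beta)$ lies in $\B(\mathbf{E})$ if and only if $\iota_x^*(\beta) = 0$ in $H^1(\Gamma_x, \textrm{Ker}(\mathbf{E})^\vee_{ab})$ for every $x \in |F|$.

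The key technical step is to establish the commutativity of the diagram
$$\xymatrix{
H^1(\Gamma, \textrm{Ker}(\mathbf{E})^\vee_{ab}) \ar[r]^-{j_{\mathbf{E}}} \ar[d]_{\iota_x^*} & \textrm{Br}_1(\mathbf{E}) \ar@{^{(}->}[r] \ar[d]^{j_x} & \textrm{Br}(\mathbf{E}) \ar[d]^{j_x} \\
H^1(\Gamma_x, \textrm{Ker}(\mathbf{E}_x)^\vee_{ab}) \ar[r]^-{j_{\mathbf{E}_x}} & \textrm{Br}_1(\mathbf{E}_x) \ar@{^{(}->}[r] & \textrm{Br}(\mathbf{E}_x)
}$$
where the left vertical arrow is the standard restriction in Galois cohomology (identifying the two dual modules via $(\eta_x)_*$), and where I first verify that $j_x$ preserves the subgroup $\textrm{Br}_1$. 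Both of these follow from the functoriality of the Hochschild-Serre spectral sequence (\ref{2.2.1}) with respect to the morphism of short exact sequences
$$\xymatrix{
1 \ar[r] & \textrm{Ker}(\mathbf{E}) \ar[r] \ar@{=}[d] & \Gamma_x(\mathbf{E}_x) \ar[r] \ar[d]^{\textrm{id}_{G_1} \times \iota_x} & \Gamma_x \ar[r] \ar[d]^{\iota_x} & 1 \\
1 \ar[r] & \textrm{Ker}(\mathbf{E}) \ar[r] & \Gamma(\mathbf{E}) \ar[r] & \Gamma \ar[r] & 1
}$$
combined with the square relating $\overline F^*$ and $\overline F_x^*$ via $(\eta_x)_*$. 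The induced map of filtrations respects the graded pieces, and on $E^\infty_{1,1}$ it recovers, after Lemma \ref{2.2}, precisely the Galois restriction map $\iota_x^*$.

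Once this square is established, the proof concludes quickly: since $j_{\mathbf{E}_x}$ is injective (Lemma \ref{2.2} applied over $F_x$), the relation $j_x(\alpha) = 0$ is equivalent to $\iota_x^*(\beta) = 0$, so
$$\alpha \in \B(\mathbf{E}) \iff j_x(\alpha) = 0 \ \forall x \iff \iota_x^*(\beta) = 0 \ \forall x \iff \beta \in \Sh^1(F, \textrm{Ker}(\mathbf{E})^\vee_{ab}),$$
and hence $j_{\mathbf{E}}$ restricts to the desired isomorphism $\Sh^1(F, \textrm{Ker}(\mathbf{E})^\vee_{ab}) \xrightarrow{\sim} \B(\mathbf{E})$.

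The main obstacle is purely bookkeeping: carefully tracking the identification of $\textrm{Hom}(\textrm{Ker}(\mathbf{E})_{ab}, \overline F^*)$ with $\textrm{Hom}(\textrm{Ker}(\mathbf{E})_{ab}, \overline F_x^*)$ as $\Gamma_x$-modules under $(\eta_x)_*$, and confirming that the restriction map in the Hochschild-Serre edge piece agrees with the usual Galois-cohomological restriction. No new ideas are required beyond the naturality already invoked in the proof of Lemma \ref{2.4}; the present statement is essentially the $(p,q) = (1,1)$ analogue of the injectivity argument used there on the $(0,2)$ piece.
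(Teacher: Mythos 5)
Your proposal is correct and takes essentially the same route as the paper: both rely on the functoriality of the Hochschild--Serre spectral sequence (\ref{2.2.1}) with respect to the map $(\textrm{id}_{G_1}\times\iota_x)^*\circ(\eta_x)_*$, identify the induced map on the $E^\infty_{1,1}$ graded piece with the Galois restriction $\iota_x^*$, and read off the result from the resulting commutative square. The paper's proof is somewhat more terse, but it leaves implicit exactly the same details (that $j_x$ preserves $\textrm{Br}_1$, and the injectivity of $j_{\mathbf E_x}$) that you spell out.
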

\begin{proof} As we already noted for every $x\in|F|$ the homomorphism
$$(\textrm{id}_{G_1}\times\iota_x)^*\circ(\eta_x)_*:
H^2(\Gamma(\mathbf E),\overline F^*)
\rightarrow H^2(\Gamma_x(\mathbf E_x),\overline F_x^*)$$
respects the filtrations (\ref{2.2.2}) and (\ref{2.4.1}), that is: $(\textrm{id}_{G_1}\times\iota_x)^*\circ(\eta_x)_*(E^2_p)\subseteq F^2_p$ for every $p=0,1,2$. Moreover the homomorphism:
$$E^{\infty}_{1,1}\cong E^2_1/E^2_2
\longrightarrow F^2_1/F^2_2\cong F^{\infty}_{1,1}$$
induced by $(\textrm{id}_{G_1}\times\iota_x)^*\circ(\eta_x)_*$ is the restriction homomorphism:
$$\iota_x^*:E^{\infty}_{1,1}=H^1(\Gamma,\textrm{\rm Ker}(\mathbf E)_{ab}^{\vee})
\longrightarrow
H^1(\Gamma_x,\textrm{\rm Ker}(\mathbf E)_{ab}^{\vee})
=F^{\infty}_{1,1}$$
(where we used that $H^3(\Gamma_x,\overline F_x^*)=0$). Hence under the isomorphism $\textrm{Br}_1(\mathbf E)\cong H^1(\Gamma,\textrm{\rm Ker}(\mathbf E)_{ab}^{\vee})$ furnished by the spectral sequence
(\ref{2.2.1}) the subgroup $\B(\mathbf E)$ is identified with the intersection of the kernels of the maps $\iota_x^*$ for all $x\in|F|$. The claim is now clear.
\end{proof}
\begin{cor} The group $\B(\mathbf E)$ is finite.
\end{cor}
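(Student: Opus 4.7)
The plan is very short: by Proposition \ref{2.7} we have an identification
$$\B(\mathbf E)=\Sh^1(F,\textrm{Ker}(\mathbf E)^{\vee}_{ab}),$$
so the finiteness assertion reduces to the classical finiteness of $\Sh^1(F,M)$ for a finite discrete $\Gamma$-module $M$, applied to $M=\textrm{Ker}(\mathbf E)^{\vee}_{ab}$. Since $\textrm{Ker}(\mathbf E)$ is a finite group, $\textrm{Ker}(\mathbf E)_{ab}$ is finite, hence so is its Pontryagin--style dual $\textrm{Ker}(\mathbf E)^{\vee}_{ab}$, and the reduction is legitimate.

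The standard argument for the finiteness of $\Sh^1(F,M)$ which I would recall (or cite from e.g. Milne's \emph{Arithmetic Duality Theorems}) runs as follows. Choose a finite Galois extension $L/F$ inside $\overline F$ such that $\Gamma_L:=\Gal(\overline F|L)$ acts trivially on $M$. The inflation--restriction sequence
$$0\longrightarrow H^1(\Gal(L/F),M)\longrightarrow H^1(\Gamma,M)\mathop{\longrightarrow}^{\rho}H^1(\Gamma_L,M)^{\Gal(L/F)}$$
is exact, and the target is simply $\Hom(\Gamma_L,M)^{\Gal(L/F)}$ because the action is trivial. Any class $\xi\in\Sh^1(F,M)$ restricts trivially at every place $x$ of $F$; picking places $x$ unramified in $L$ and such that their Frobenius conjugacy classes in $\Gal(L/F)$ exhaust $\Gal(L/F)$ (possible by Chebotarev's density theorem), the homomorphism $\rho(\xi)\in\Hom(\Gamma_L,M)$ vanishes on a dense subset of $\Gamma_L$, hence identically. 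Thus $\Sh^1(F,M)$ embeds into $H^1(\Gal(L/F),M)$, a cohomology group of a finite group with coefficients in a finite module, which is itself finite.

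The main (and only) conceptual step is the above Chebotarev/inflation--restriction argument; nothing further is required once Proposition \ref{2.7} is in hand. I do not expect any genuine obstacle, as this is routine, but one should be mildly careful that $L$ can indeed be chosen finite Galois (which is immediate since $\textrm{Ker}(\mathbf E)^{\vee}_{ab}$ is finite, so its $\Gamma$-action factors through a finite quotient of $\Gamma$).
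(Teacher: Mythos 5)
The proposal follows the paper exactly in reducing, via Proposition \ref{2.7}, to the finiteness of $\Sh^1(F,M)$ for the finite discrete $\Gamma$-module $M=\textrm{Ker}(\mathbf E)^{\vee}_{ab}$; the paper then simply cites Theorem 4.10 of \cite{Mi} for that finiteness, while you additionally sketch the standard inflation--restriction/Chebotarev proof of the cited ingredient. Logically, therefore, the two arguments are the same, and your sketch just fills in what the paper leaves to a reference.

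One caveat on the sketch itself: the Chebotarev step as written is slightly off. A finite collection of decomposition subgroups $\Gamma_w\subset\Gamma_L$ is a finite union of closed subgroups and so is certainly not dense in $\Gamma_L$, and the condition that the Frobenius classes of the chosen places exhaust $\Gal(L/F)$ is not the relevant one. The clean version is this: since $\rho(\xi)\in\Hom(\Gamma_L,M)^{\Gal(L/F)}$ factors through a finite abelian quotient $\Gal(K/L)$ of $\Gamma_L$, with $K/F$ Galois because $\rho(\xi)$ is $\Gal(L/F)$-equivariant, the local triviality of $\xi$ forces $\rho(\xi)|_{\Gamma_w}=0$ for every place $w$ of $L$, i.e.\ every place of $L$ splits completely in $K$; Chebotarev applied to $K/L$ then gives $K=L$ and hence $\rho(\xi)=0$. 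This small repair does not change your strategy, and the conclusion that $\Sh^1(F,M)$ injects into the finite group $H^1(\Gal(L/F),M)$ stands.
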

\begin{proof} The group $\Sh^1(F,\textrm{\rm Ker}(\mathbf E)^{\vee}_{ab})$ is known to be finite (see Theorem 4.10 of \cite{Mi}, page 70). The claim now follows from the proposition above.
\end{proof}

\section{The Brauer-Manin obstruction}

\begin{notn}\label{3.1} Let $F$ be again an arbitrary field with absolute Galois group $\Gamma$ and let $\mathbf E$ be an embedding problem given by the diagram (\ref{1.0.1}). Note that the map which assigns to every solution $h$ of $\mathbf E$ the homomorphism
$$s(h):\Gamma\rightarrow\Gamma(\mathbf E)\subseteq
 G_1\times\Gamma$$
given by the rule $g\mapsto(h(g),g)$ is a bijection between the solutions of $\mathbf E$ and continuous sections of the exact sequence (\ref{1.3.1}). This bijection induces a bijection between $\textrm{\rm Sol}(\mathbf E)$ and the conjugacy classes of sections of (\ref{1.3.1}). We will always identify these two pairs of sets under these bijections.
\end{notn}
Assume now that $F$ is a global field and let $\mathbf E$ be as above.
\begin{lemma}\label{3.2} The set $\textrm{\rm Sol}_{\mathbb A}(\mathbf E)$ is non-empty if and only if for every $x\in|F|$ there is no local obstruction to the embedding problem $\mathbf E$ at the place $x$.
\end{lemma}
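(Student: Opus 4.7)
The forward implication is immediate from the definitions: any $\prod_{x\in|F|}h_x\in\textrm{Sol}_{\mathbb A}(\mathbf E)$ provides a solution of each $\mathbf E_x$, so there is no local obstruction anywhere.

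For the converse the plan is constructive. Assume $\textrm{Sol}(\mathbf E_x)\neq\emptyset$ for every $x\in|F|$; the task is to choose these local solutions compatibly so that almost all are unramified. First, identify the finite Galois extension $L/F$ corresponding to the open normal subgroup $\textrm{Ker}(\psi)\subseteq\Gamma$ furnished by continuity of $\psi$ (since $G_2$ is finite); then $\psi$ factors through $\Gal(L/F)\hookrightarrow G_2$. For all but finitely many non-archimedean $x$, the extension $L/F$ is unramified at $x$, and for such $x$ the composition $\iota_x\circ\psi:\Gamma_x\to G_2$ vanishes on inertia and therefore factors as $\bar\psi_x\circ u_x$ for some continuous $\bar\psi_x:\widehat{\mathbb Z}\to G_2$.

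At such $x$ I would construct an unramified solution of $\mathbf E_x$ as follows. Since $\widehat{\mathbb Z}$ is topologically generated by $1$, $\bar\psi_x$ is determined by its value $g_0:=\bar\psi_x(1)\in G_2$. Using surjectivity of $\phi$, pick any lift $\tilde g_0\in\phi^{-1}(g_0)\subseteq G_1$. Because $G_1$ is finite, $\tilde g_0$ has finite order, so the assignment $1\mapsto\tilde g_0$ extends uniquely to a continuous homomorphism $\widetilde{\bar\psi}_x:\widehat{\mathbb Z}\to G_1$ lifting $\bar\psi_x$. Then $\widetilde{\psi}_x:=\widetilde{\bar\psi}_x\circ u_x$ is a solution of $\mathbf E_x$ that factors through $u_x$, i.e., an element of $\textrm{Sol}_{un}(\mathbf E_x)$. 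For the finitely many remaining places (the archimedean ones together with those non-archimedean $x$ ramified in $L$), pick an arbitrary $h_x\in\textrm{Sol}(\mathbf E_x)$ using the hypothesis. The resulting product is an element of $\textrm{Sol}_{\mathbb A}(\mathbf E)$.

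There is no real obstacle here; the argument is essentially bookkeeping. The only substantive point is the Frobenius-lifting step, which goes through because any continuous homomorphism from $\widehat{\mathbb Z}$ to a finite group is determined by, and is automatically continuous given, the image of the topological generator $1$.
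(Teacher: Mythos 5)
Your proof is correct and follows essentially the same route as the paper's: the paper's one-sentence argument is precisely that $\psi\circ\iota_x$ is unramified for almost all $x$ and that any continuous $\widehat{\mathbb Z}\to G_2$ lifts through the surjection $\phi$ to a continuous $\widehat{\mathbb Z}\to G_1$; you simply spell out the Frobenius-lifting step (pick $\tilde g_0\in\phi^{-1}(g_0)$ and use that any element of a finite group gives a continuous homomorphism from $\widehat{\mathbb Z}$) and the trivial handling of the finitely many bad places. (One typo: you wrote $\iota_x\circ\psi$ where $\psi\circ\iota_x$ is meant.)
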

\begin{proof} Note that for almost all $x\in|F|$ the homomorphism $\psi\circ\iota_x$ is unramified and for every such $x$ the set $\textrm{Sol}_{un}(\mathbf E_x)$ is non-empty since we can lift any continuous homomorphism $\widehat{\mathbb Z}\rightarrow G_2$ to a continuous homomorphism
$\widehat{\mathbb Z}\rightarrow G_1$ with respect to the surjective map $\phi$.
\end{proof}
For every $x\in|F|$ let
$$\textrm{inv}_x:\textrm{Br}(F_x)=H^2(\Gamma_x,\overline F_x^*)
\rightarrow\mathbb Q/\mathbb Z$$
denote the canonical invariant of the Brauer group Br$(F_x)$ of the local field $F_x$. Let Inf denote the inflation map in group cohomology, as usual.
\begin{lemma}\label{3.3} For every $c\in H^2(\Gamma(\mathbf E),\overline F^*)$ and for every ad\`elic solution $\prod_{x\in|F|}h_x$ of $\mathbf E$ the image of $c$ under the composition:
$$\CD
H^2(\Gamma(\mathbf E),\overline F^*)@>{(\textrm{\rm id}_{G_1}\times\iota_x)^*}>>H^2(\Gamma_x(\mathbf E_x),\overline F^*)
@>{(\eta_x)_*}>>H^2(\Gamma_x(\mathbf E_x),\overline F_x^*)
@>s(h_x)^*>>\endCD$$
$$\CD H^2(\Gamma_x,\overline F_x^*)
@>\textrm{\rm inv}_x>>\mathbb Q/\mathbb Z
\endCD$$
is zero for almost all $x\in|F|$.
\end{lemma}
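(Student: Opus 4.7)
The plan is to spread $c$ out to an \emph{integral unramified} cocycle at almost all places; the composition in the lemma then lifts to $H^2(\widehat{\mathbb Z},\mathcal O_{F_x^{un}}^*)$, which vanishes, forcing $\inv_x$ to be $0$. Here $F_x^{un}$ denotes the maximal unramified extension of $F_x$ inside $\overline F_x$, so $\widehat{\mathbb Z}\cong\Gal(F_x^{un}/F_x)$.

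First, since $\overline F^{*}=\varinjlim_L L^{*}$ over finite Galois $L/F$, and continuous cohomology of a profinite group with discrete coefficients commutes with filtered colimits, $c$ is inflated from a class in $H^2(G_1\times_{G_2}\Gal(L/F),L^{*})$, with $L$ chosen large enough that $\psi$ factors through $\Gal(L/F)$. Pick an explicit $2$-cocycle representative $\bar c$; its image is a finite subset $T\subset L^{*}$. Let $S\subset|F|$ be the (finite) set consisting of the archimedean places, the places where $L/F$ is ramified, the places of $F$ lying below any prime of $L$ at which some element of $T\cup T^{-1}$ fails to be integral, and the finitely many exceptional places at which $h_x$ is still ramified. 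For $x\notin S$ all the relevant data is unramified and integral.

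Next, for $x\notin S$ I verify: (i) $\bar\iota_x\colon\Gamma_x\xrightarrow{\iota_x}\Gamma\twoheadrightarrow\Gal(L/F)$ factors through $u_x$, because $L/F$ is unramified at the place determined by $\eta_x$; (ii) $h_x\colon\Gamma_x\to G_1$ factors through $u_x$, by unramifiedness of $h_x$. Hence the map $g\mapsto(h_x(g),\bar\iota_x(g))$ into $G_1\times_{G_2}\Gal(L/F)$ factors through $\widehat{\mathbb Z}$. Consequently the cocycle produced by the composition in the lemma,
$$c_x(g_1,g_2):=\eta_x\bigl(\bar c(s(h_x)(g_1),s(h_x)(g_2))\bigr),$$
is the inflation of a cocycle $\widetilde c_x\in Z^2(\widehat{\mathbb Z},(F_x^{un})^{*})$, using that $\eta_x(L)\subseteq F_x^{un}$. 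Because $T\subset\mathcal O_L^{*}$ at every prime above $x$, and $\eta_x$ sends integral units to integral units, in fact $\widetilde c_x\in Z^2(\widehat{\mathbb Z},\mathcal O_{F_x^{un}}^{*})$.

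Finally, the standard vanishing $H^2(\widehat{\mathbb Z},\mathcal O_{F_x^{un}}^{*})=0$---via the unit filtration, whose $1$-units are cohomologically trivial (Hensel/logarithm) and whose residue-field quotient has $H^2=\Br(\text{finite field})=0$---forces $[\widetilde c_x]=0$, hence $[c_x]=0$ in $\Br(F_x)$, hence $\inv_x[c_x]=0$ for all $x\notin S$. The main technical care is in step three: identifying the composite cocycle produced by the three successive operations $(\textrm{id}_{G_1}\times\iota_x)^{*}$, $(\eta_x)_{*}$, and $s(h_x)^{*}$, and then verifying simultaneously that it becomes unramified (factors through $\widehat{\mathbb Z}$) and takes integral-unit values. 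Once that bookkeeping is settled, the cohomological conclusion is automatic.
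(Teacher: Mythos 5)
Your proposal is correct and takes essentially the same approach as the paper: inflate $c$ from a finite quotient, observe that for almost all $x$ the composite section and the residual Galois action are both unramified, and then conclude via the cohomology of $\widehat{\mathbb Z}$ with unit coefficients. The paper phrases the final step as the image of the class under the valuation isomorphism $H^2(\widehat{\mathbb Z},(F_x^{un})^*)\cong H^2(\widehat{\mathbb Z},\mathbb Z)$ being zero, whereas you invoke directly the vanishing $H^2(\widehat{\mathbb Z},\mathcal O_{F_x^{un}}^*)=0$; these are two sides of the same coin.
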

\begin{proof} Note that there is an open normal subgroup $U\triangleleft\Gamma(\mathbf E)$ such that $c$ is the image of a cohomology class $\widetilde c\in H^2(\Gamma(\mathbf E)/U,(\overline F^*)^U)$ with respect to the inflation map $H^2(\Gamma(\mathbf E)/U,(\overline F^*)^U)\rightarrow H^2(\Gamma(\mathbf E),\overline F^*)$. Let $G=\Gamma(\mathbf E)/U$ be the quotient by $U$. Moreover let $\rho:\Gamma(\mathbf E)\rightarrow G$ denote the quotient map and let $\overline{\rho}:\Gamma\rightarrow G/\rho(\textrm{Ker}(\mathbf E))$ be the unique continuous homomorphism such that $\overline{\rho}\circ\pi_{\mathbf E}$ is the composition of $\rho$ and the quotient map $G\rightarrow G/\rho(\textrm{Ker}(\mathbf E))$. We may assume without the loss of generality that $\overline{\rho}\circ\iota_x$ and $h_x$ are unramified. In this the case the homomorphism $\rho\circ(\textrm{\rm id}_{G_1}\times\iota_x)\circ s(h_x)$ is also unramified. Therefore the cohomology class
$$\widetilde c_x\stackrel{\textrm{def}}{=}
(\eta_x)_*(\rho\circ(\textrm{\rm id}_{G_1}\times\iota_x)\circ s(h_x))^*(\widetilde c)\in H^2(\Gamma_x,\overline F_x^*)$$
lies in the image of the inflation map $H^2(\widehat{\mathbb Z},\mathbb (\overline F_x^*)^{I_x})\rightarrow H^2(\Gamma_x,\overline F_x^*)$ where $I_x\triangleleft\Gamma_x$ is the inertia subgroup. Note that for all but finitely many $x$ the image of $\widetilde c_x$ under the map $H^2(\widehat{\mathbb Z},\mathbb (\overline F_x^*)^{I_x})\rightarrow H^2(\widehat{\mathbb Z},\mathbb Z)$ induced by $x$ is zero (for example because the valuation of the values of a fixed cocycle representing $\widetilde c$ with respect to $x$ is zero for all but finitely many $x$). Because the map $H^2(\widehat{\mathbb Z},\mathbb (\overline F_x^*)^{I_x})\rightarrow H^2(\widehat{\mathbb Z},\mathbb Z)$ is an isomorphism (see Theorem 2 on page 130 in \cite{Se0}), the claim is now clear.
\end{proof}
Note that for every $x\in|F|$ and $h_x$ as above the map $s(h_x)^*$ only depends on the conjugacy class of $h_x$, therefore the pairing:
$$(\cdot,\cdot):\textrm{Sol}_{\mathbb A}(\mathbf E)\times
H^2(\Gamma(\mathbf E),\overline F^*)\rightarrow\mathbb Q/\mathbb Z$$
given by the rule
$$(\prod_{x\in|F|}h_x,c)=\sum_{x\in|F|}
\textrm{\rm inv}_x(s(h_x)^*((\eta_x)_*(
(\textrm{id}_{G_1}\times\iota_x)^*( c))))$$
is well-defined, because all but finitely many of the summands are zero by the lemma above.
\begin{lemma}\label{3.4} The image of group $H^2(\Gamma,\overline F^*)$ with respect to the homomorphism $\pi_{\mathbf E}^*$ is annihilated by the pairing $(\cdot,\cdot)$.
\end{lemma}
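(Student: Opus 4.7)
The plan is to show that for every $c \in H^2(\Gamma,\overline F^*)$ the inner expression defining $(\prod_x h_x, \pi_{\mathbf E}^*(c))$ collapses, for each place $x$, to the local invariant of the restriction of $c$ to $F_x$, so that the sum is governed purely by the reciprocity law for $\textrm{Br}(F)$.

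First I would record the naturality identity coming from the commutative square of topological groups
$$\begin{CD}
\Gamma_x(\mathbf E_x) @>\textrm{id}_{G_1}\times\iota_x>> \Gamma(\mathbf E)\\
@V\pi_{\mathbf E_x}VV @VV\pi_{\mathbf E}V\\
\Gamma_x @>\iota_x>> \Gamma
\end{CD}$$
which is immediate from the construction of $\Gamma(\mathbf E)$ as a fibre product. Passing to cohomology, this gives
$$(\textrm{id}_{G_1}\times\iota_x)^*\circ\pi_{\mathbf E}^* = \pi_{\mathbf E_x}^*\circ\iota_x^*,$$
and combined with the (trivial) naturality of $(\eta_x)_*$ in the coefficient change $\overline F^*\to\overline F_x^*$, we get
$$(\eta_x)_*\bigl((\textrm{id}_{G_1}\times\iota_x)^*(\pi_{\mathbf E}^*(c))\bigr)=\pi_{\mathbf E_x}^*\bigl((\eta_x)_*(\iota_x^*(c))\bigr).$$

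Next I would use the defining property of $s(h_x)$: since $s(h_x)(g)=(h_x(g),g)$ and $\pi_{\mathbf E_x}$ is projection to the second factor, we have $\pi_{\mathbf E_x}\circ s(h_x)=\textrm{id}_{\Gamma_x}$, hence $s(h_x)^*\circ\pi_{\mathbf E_x}^*$ is the identity on $H^2(\Gamma_x,\overline F_x^*)$. Substituting into the expression defining the pairing, the $x$-th term simplifies to
$$\textrm{inv}_x\bigl((\eta_x)_*(\iota_x^*(c))\bigr),$$
which is precisely the local invariant at $x$ of the image of $c$ under the localisation map $\textrm{Br}(F)\to\textrm{Br}(F_x)$. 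In particular the dependence on the chosen solution $h_x$ has dropped out.

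Finally, the Brauer--Manin type sum becomes
$$\bigl(\textstyle\prod_x h_x,\pi_{\mathbf E}^*(c)\bigr)=\sum_{x\in|F|}\textrm{inv}_x\bigl((\eta_x)_*(\iota_x^*(c))\bigr),$$
and by the reciprocity law (the fundamental exact sequence of global class field theory), this sum vanishes for every $c\in\textrm{Br}(F)=H^2(\Gamma,\overline F^*)$. The only step that requires a moment of care is the naturality identity $(\textrm{id}_{G_1}\times\iota_x)^*\circ\pi_{\mathbf E}^*=\pi_{\mathbf E_x}^*\circ\iota_x^*$ together with the compatibility of $(\eta_x)_*$ with these pullbacks; once this is in place, everything else is formal. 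No genuine obstacle arises, which is natural since Lemma \ref{3.4} is the standard verification that pulling back from the base kills the obstruction pairing, exactly as in the classical Brauer--Manin setup for rational points.
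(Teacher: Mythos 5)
Your proof is correct and follows essentially the same route as the paper: the paper collapses your two steps (the fibre-product naturality $(\textrm{id}_{G_1}\times\iota_x)^*\circ\pi_{\mathbf E}^*=\pi_{\mathbf E_x}^*\circ\iota_x^*$ and the section identity $\pi_{\mathbf E_x}\circ s(h_x)=\textrm{id}_{\Gamma_x}$) into the single observation that $\pi_{\mathbf E}\circ(\textrm{id}_{G_1}\times\iota_x)\circ s=\iota_x$, and then likewise invokes reciprocity for $\textrm{Br}(F)$.
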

\begin{proof} Note that for every $x\in|F|$ and for every section $s:\Gamma_x\rightarrow\Gamma_x(\mathbf E_x)$ of the short exact sequence (\ref{1.3.1}) we have $\pi_E\circ(\textrm{\rm id}_{G_1}\times\iota_x)\circ s=\iota_x$. Hence for every $c\in H^2(\Gamma,\overline F^*)$ and  every ad\`elic solution $h=\prod_{x\in|F|}h_x$ of $\mathbf E$ we have:
\begin{eqnarray}
(h,\pi_{\mathbf E}^*( c ))&=&\sum_{x\in|F|}
\textrm{\rm inv}_x(s(h_x)^*((\eta_x)_*(
(\textrm{id}_{G_1}\times\iota_x)^*( \pi_{\mathbf E}^*(c )))))
\nonumber\\
&=&
\sum_{x\in|F|}
\textrm{\rm inv}_x((\eta_x)_*\iota_x^*(c))=0\nonumber
\end{eqnarray}
by the reciprocity law for Brauer groups over global fields.
\end{proof}
\begin{defn}\label{3.5} By the lemma above we have a pairing:
$$\langle\cdot,\cdot\rangle:\textrm{Sol}_{\mathbb A}(\mathbf E)\times\textrm{\textrm{Br}}(\mathbf E)\rightarrow\mathbb Q/\mathbb Z$$
such that for every $h\in\textrm{Sol}_{\mathbb A}(\mathbf E)$ and $c\in H^2(\Gamma(\mathbf E),\overline F^*)$ we have:
$$(h,c)=\langle h,\sigma_{\mathbf E}(h)\rangle$$
where $\sigma_{\mathbf E}:H^2(\Gamma(\mathbf E),\overline F^*)\rightarrow\textrm{\textrm{Br}}(\mathbf E)$ is the tautological surjection. For every subset $X\subseteq\textrm{Br}(\mathbf E)$ let $\textrm{Sol}_{\mathbb A}^X(\mathbf E)$ denote the set:
$$\textrm{Sol}_{\mathbb A}^X(\mathbf E)=\{h\in\textrm{\rm Sol}_{\mathbb A}(\mathbf E)|\langle h,c\rangle=0\
(\forall c\in X)\}.$$
In the special case when $X=\textrm{Br}(\mathbf E),\textrm{Br}_1(\mathbf E)$ or $\B(\mathbf E)$ we will use the shorter superscripts $\textrm{Br},\textrm{Br}_1$ and $\B$, respectively. Clearly we have the inclusions:
$$\textrm{Sol}_{\mathbb A}^{\textrm{Br}}(\mathbf E)\subseteq\textrm{Sol}_{\mathbb A}^{\textrm{Br}_1}(\mathbf E)\subseteq
\textrm{Sol}_{\mathbb A}^{\Bi}(\mathbf E)\subseteq\textrm{Sol}_{\mathbb A}(\mathbf E).$$
\end{defn}
\begin{lemma}\label{3.6} We have $r(\textrm{\rm Sol}(\mathbf E))\subseteq
\textrm{\rm Sol}_{\mathbb A}^{\textrm{\rm Br}}(\mathbf E)$.
\end{lemma}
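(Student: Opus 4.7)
The plan is to unwind the definition of the Brauer--Manin pairing and reduce the statement to the Albert--Brauer--Hasse--Noether reciprocity law for the Brauer group of a global field. Fix a global solution $\widetilde\psi:\Gamma\to G_1$ of $\mathbf E$ and write $h_x=\widetilde\psi\circ\iota_x$ for its component at $x\in|F|$. First I would verify that $r(\widetilde\psi)=\prod_x h_x$ indeed lies in $\textrm{Sol}_{\mathbb A}(\mathbf E)$: since $\widetilde\psi$ is continuous with finite image, its kernel cuts out a finite Galois extension of $F$ which is unramified away from a finite set of places, and at every place outside this set $h_x$ factors through $u_x$. This is a routine verification but should be recorded.

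The key observation is a compatibility of sections. Using the description of solutions as sections in Notation \ref{3.1}, a direct computation on elements shows that the diagram
$$\CD
\Gamma_x @>s(h_x)>> \Gamma_x(\mathbf E_x)\\
@V\iota_x VV @VV\textrm{id}_{G_1}\times\iota_x V\\
\Gamma @>s(\widetilde\psi)>> \Gamma(\mathbf E)
\endCD$$
commutes, because both composites send $g\in\Gamma_x$ to $(\widetilde\psi(\iota_x(g)),\iota_x(g))\in G_1\times\Gamma$. Passing to cohomology with $\overline F^*$ coefficients (and applying $(\eta_x)_*$ to move to $\overline F_x^*$), this yields for every $c\in H^2(\Gamma(\mathbf E),\overline F^*)$ the identity
$$(\eta_x)_*\,s(h_x)^*\,(\textrm{id}_{G_1}\times\iota_x)^*(c)=(\eta_x)_*\,\iota_x^*\bigl(s(\widetilde\psi)^*(c)\bigr)$$
in $H^2(\Gamma_x,\overline F_x^*)=\textrm{Br}(F_x)$.

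Setting $c'=s(\widetilde\psi)^*(c)\in H^2(\Gamma,\overline F^*)=\textrm{Br}(F)$, the definition of the Brauer--Manin pairing in Definition \ref{3.5} then gives
$$\bigl(r(\widetilde\psi),c\bigr)=\sum_{x\in|F|}\textrm{inv}_x\bigl((\eta_x)_*\,\iota_x^*(c')\bigr),$$
and this sum vanishes by the reciprocity law $\sum_x\textrm{inv}_x\circ\textrm{loc}_x=0$ on $\textrm{Br}(F)$ (the same ingredient already used at the end of Lemma \ref{3.4}). Since every element of $\textrm{Br}(\mathbf E)$ is of the form $\sigma_{\mathbf E}(c)$, this shows $\langle r(\widetilde\psi),\sigma_{\mathbf E}(c)\rangle=0$ for all $c$, which is exactly $r(\widetilde\psi)\in\textrm{Sol}_{\mathbb A}^{\textrm{Br}}(\mathbf E)$.

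There is no real obstacle here: the argument is structurally identical to the proof that the Brauer--Manin pairing annihilates rational points of a variety. The one point that requires a (brief) check, rather than pure formalism, is the compatibility of sections displayed above; everything else is either a definition-chase or an invocation of Hasse reciprocity.
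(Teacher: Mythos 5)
Your proof is correct and follows essentially the same route as the paper's: the key step in both is the compatibility of sections $(\textrm{id}_{G_1}\times\iota_x)\circ s(\iota_x\circ\widetilde\psi)=s(\widetilde\psi)\circ\iota_x$, which reduces the pairing to $\sum_x\textrm{inv}_x\bigl((\eta_x)_*\iota_x^*(s(\widetilde\psi)^*(c))\bigr)$ and then to Hasse reciprocity on $\textrm{Br}(F)$. The only addition is your preliminary check that $r(\widetilde\psi)$ lands in $\textrm{Sol}_{\mathbb A}(\mathbf E)$, which the paper implicitly assumes from the discussion preceding Lemma \ref{3.6}.
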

\begin{proof} Let $h$ be a solution of $\mathbf E$. Then for every $x\in|F|$ we have
$$(\textrm{id}_{G_1}\times\iota_x)\circ s(\iota_x\circ h)
= s(h)\circ\iota_x.$$
Hence for every $c\in H^2(\Gamma(\mathbf E),\overline F^*)$ we have:
\begin{eqnarray}
(h,c)&=&\sum_{x\in|F|}
\textrm{\rm inv}_x(s(\iota_x\circ h)^*((\eta_x)_*(
(\textrm{id}_{G_1}\times\iota_x)^*( c))))\nonumber
\\
&=&
\sum_{x\in|F|}\textrm{\rm inv}_x((\eta_x)_*\iota_x^*(s(h)^*( c)))=0
\nonumber
\end{eqnarray}
by the reciprocity law for Brauer groups over global fields applied to the cohomology class $s(h)^*(c)\in H^2(\Gamma,\overline F^*)$.
\end{proof}
Now we can formulate an improved version of the local-global principle for embedding problems:
\begin{prin} The set $\textrm{\rm Sol}(\mathbf E)$ is non-empty if and only if the set $\textrm{\rm Sol}^{\textrm{\rm Br}}_{\mathbb A}(\mathbf E)$ is non-empty.
\end{prin}
We will say that the Brauer-Manin obstruction is the only one to the local-global principle for $\mathbf E$ if the claim above is true for $\mathbf E$. Theorem \ref{1.1} implies that this is the case when Ker$(\mathbf E)$ is abelian and its order is not divisible by char$(F)$.
\begin{defn}\label{4.8} Let $\mathbf E$ be an embedding problem over $F$ such that the set $\textrm{Sol}_{\mathbb A}(\mathbf E)$ is non-empty and let $b$ be an element of $\B(\mathbf E)$. Choose a $b'\in H^2(\Gamma(\mathbf E),\overline F^*)$ such that $\sigma_{\mathbf E}(b')=b$. By definition for every $x\in|F|$ there is a $b_x\in H^2(\Gamma_x,\overline F_x^*)$ such that $(\eta_x)_*(\textrm{id}_{G_1}\times\iota_x)^*(b')=\pi_{\mathbf E_x}^*(b_x)$. Therefore for every solution $h_x$ of $\mathbf E_x$ we have:
$$s(h_x)^*((\eta_x)_*(
(\textrm{id}_{G_1}\times\iota_x)^*( b')))=
s(h_x)^*(\pi_{\mathbf E_x}^*(b_x))=b_x,$$
and hence the value of
$$\b_{\mathbf E}(b)=\langle h,b\rangle=\sum_{x\in|F|}\textrm{inv}_x(b_x)$$
does not depend on the choice of the ad\`elic solution $h=\prod_{x\in|F|}h_x$ of $\mathbf E$. Let
$$\b_{\mathbf E}:\B(\mathbf E)=\Sh^1(F,\textrm{Ker}(\mathbf E)^{\vee}_{ab})\rightarrow\mathbb Q/\mathbb Z$$
denote the function defined by the formula above.
\end{defn}
\begin{rem}\label{3.8} Note that for every $\prod_{x\in|F|}h_x\in\prod_{x\in|F|}
\textrm{Sol}(\mathbf E_x)$ and every $b$, $b'$ and $b_x$ as above the infinite sum
$$\sum_{x\in|F|}\textrm{\rm inv}_x(s(h_x)^*((\eta_x)_*(
(\textrm{id}_{G_1}\times\iota_x)^*( b'))))=\sum_{x\in|F|}\textrm{inv}_x(b_x)$$
is actually finite, and it is equal to the value of $\b_{\mathbf E}(b)$. We will use this observation in the geometric proof of Theorem \ref{1.1} in section 9.
\end{rem}

\section{Descent for embedding problems}

\begin{defn} Let $\mathbf E$ and $\mathbf E'$ be two embedding problems over an arbitrary field $F$ given by the diagrams
$$\CD @.\Gamma
@.\quad\quad\quad\quad@.
@.\Gamma\\
@.@V\psi VV\textrm{and}@.
@.@V\psi' VV\\
G_1@>\phi>>G_2
@.\quad\quad\quad\quad@.
G_1'@>\phi'>>G_2',\endCD$$
respectively. A map $g:\mathbf E\rightarrow\mathbf E'$ from the embedding problem $\mathbf E$ and to the embedding problem $\mathbf E'$ is a pair of group homomorphisms $g_1:G_1\rightarrow G_1'$ and $g_2:G_2\rightarrow G_2'$ such that the diagrams
$$\CD G_1@>\phi>>G_2@.
\quad\quad\quad\quad@.
\Gamma@=\Gamma\\
@Vg_1VV@Vg_2VV\textrm{and}@.
@VV\psi V@V\psi' VV\\
G_1'@>\phi'>>G_2'@.
\quad\quad\quad\quad@.
G_2@>g_2>>G_2',\endCD$$
are commutative. Let $\textrm{Prob}(F)$ denote the category whose objects are embedding problems over $F$ and whose morphisms are maps between them as defined above.
\end{defn}
\begin{defn} Let Sol denote the functor from $\textrm{Prob}(F)$ to the category of sets which maps every object $\mathbf E$ of $\textrm{Prob}(F)$ to Sol$(\mathbf E)$ and every morphism $g:\mathbf E\rightarrow\mathbf E'$ to the map Sol$(g):\textrm{Sol}(\mathbf E)\rightarrow\textrm{Sol}(\mathbf E')$ furnished by the rule $\widetilde{\psi}\mapsto g_1\circ\widetilde{\psi}$ where $g_1$ is as above. There is a similar functor $\textrm{Sol}_{\mathbb A}$ from $\textrm{Prob}(F)$ to the category of sets that maps every object $\mathbf E$ of $\textrm{Prob}(F)$ to $\textrm{Sol}_{\mathbb A}(\mathbf E)$ when $F$ is a global field. Moreover there is a unique functor $\Gamma(\cdot)$ from $\textrm{Prob}(F)$ to the category of profinite groups which maps every object $\mathbf E$ of $\textrm{Prob}(F)$ to $\Gamma(\mathbf E)$ and for every morphism $g:\mathbf E\rightarrow\mathbf E'$ the diagram:
$$\CD\Gamma(\mathbf E)@>>>G_1\times\Gamma\\
@V\Gamma(g)VV@V{g_1\times\textrm {id}_{\Gamma}}VV\\
\Gamma(\mathbf E')@>>>G_1'\times\Gamma\endCD$$
commutes.
\end{defn}
\begin{lemma} For every morphism $g:\mathbf E\rightarrow\mathbf E'$ in $\textrm{\rm Prob}(F)$ the homomorphism:
$$\Gamma(g)^*:H^2(\Gamma(\mathbf E'),\overline F^*)\longrightarrow
H^2(\Gamma(\mathbf E),\overline F^*)$$
respects the filtrations on $H^2(\Gamma(\mathbf E'),\overline F^*)$ and $
H^2(\Gamma(\mathbf E),\overline F^*)$ induced by the Hoch\-schild-Serre spectral sequence (\ref{2.2.1}) for $\mathbf E'$ and $\mathbf E$, respectively.
\end{lemma}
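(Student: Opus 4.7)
The plan is to realise $\Gamma(g)$ as part of a morphism of short exact sequences of profinite groups and then invoke the naturality of the Hochschild--Serre spectral sequence. From the defining square of $\Gamma(g)$ in the previous lemma and the fact that the horizontal arrows $\Gamma(\mathbf E)\hookrightarrow G_1\times\Gamma$ and $\Gamma(\mathbf E')\hookrightarrow G_1'\times\Gamma$ are inclusions, $\Gamma(g)$ must act on points by the rule $(a,b)\mapsto (g_1(a),b)$. From this, two things follow. First, $\Gamma(g)$ sends $i_{\mathbf E}(\textrm{Ker}(\mathbf E))$ into $i_{\mathbf E'}(\textrm{Ker}(\mathbf E'))$: if $a\in\textrm{Ker}(\phi)$, then the commutativity of the first square in the definition of a morphism of embedding problems yields $\phi'(g_1(a))=g_2(\phi(a))=1$, so $g_1(a)\in\textrm{Ker}(\phi')$. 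Second, $\pi_{\mathbf E'}\circ\Gamma(g)=\pi_{\mathbf E}$, since both maps are the projection onto the $\Gamma$-factor. Therefore the diagram
$$\CD
1 @>>> \textrm{Ker}(\mathbf E) @>i_{\mathbf E}>> \Gamma(\mathbf E) @>\pi_{\mathbf E}>> \Gamma @>>> 1 \\
@. @VVV @VV\Gamma(g)V @| @. \\
1 @>>> \textrm{Ker}(\mathbf E') @>i_{\mathbf E'}>> \Gamma(\mathbf E') @>\pi_{\mathbf E'}>> \Gamma @>>> 1
\endCD$$
commutes and has exact rows.

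Next I would observe that the $\Gamma(\mathbf E)$- and $\Gamma(\mathbf E')$-module structures on $\overline F^*$ are both pulled back from the same $\Gamma$-module structure via $\pi_{\mathbf E}$ and $\pi_{\mathbf E'}$, and hence are compatible along $\Gamma(g)$ by the second fact above. The displayed commutative diagram of extensions together with this compatibility of coefficients is precisely the input that produces a map of the two associated Hochschild--Serre spectral sequences (\ref{2.2.1}). A morphism of spectral sequences automatically respects the induced filtrations on the abutments; applied in total degree $2$, this is exactly the assertion that $\Gamma(g)^*$ maps $E^2_p$ for $\mathbf E'$ into $E^2_p$ for $\mathbf E$ for every $p=0,1,2$.

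There is no genuine obstacle: the content of the lemma is the identification of $\Gamma(g)$ as an honest morphism of group extensions, after which the conclusion is an instance of the standard naturality of the Grothendieck--Hochschild--Serre spectral sequence. The only point requiring any care is the verification that the formula $\Gamma(g)(a,b)=(g_1(a),b)$ actually defines a map $\Gamma(\mathbf E)\to\Gamma(\mathbf E')$, which amounts once more to the relation $\phi'\circ g_1=g_2\circ\phi$ built into the definition of a morphism of embedding problems.
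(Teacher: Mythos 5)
Your proof is correct and follows essentially the same approach as the paper's: establishing the morphism of short exact sequences (which the paper simply asserts and you spell out by computing $\Gamma(g)(a,b)=(g_1(a),b)$ and checking compatibility) and then invoking the naturality of the Hochschild--Serre spectral sequence.
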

\begin{proof} Note that the diagram:
$$\CD1@>>>\textrm{Ker}(\mathbf E)@>i_{\mathbf E}>>\Gamma(\mathbf E)
@>\pi_{\mathbf E}>>\Gamma@>>>1\\
@.@V{g_1|_{\textrm{Ker}(\mathbf E)}}VV@V{\Gamma(g)}VV@|@.\\
1@>>>\textrm{Ker}(\mathbf E')@>i_{\mathbf E'}
>>\Gamma(\mathbf E')
@>\pi_{\mathbf E'}>>\Gamma@>>>1\endCD$$
commutes. The claim is now clear.
\end{proof}
\begin{notn} By the above there is a unique functor from $\textrm{Prob}(F)$ to the category of abelian groups which will be denoted by Br and which maps every object $\mathbf E$ of $\textrm{Prob}(F)$ to $\textrm{Br}(\mathbf E)$ and for every morphism $g:\mathbf E\rightarrow\mathbf E'$ the diagram:
$$\CD H^2(\Gamma(\mathbf E'),\overline F^*)
@>\Gamma(g)^*>>
H^2(\Gamma(\mathbf E),\overline F^*)\\
@V\sigma_{\mathbf E'}VV@V\sigma_{\mathbf E}VV\\
\textrm{\rm Br}(\mathbf E')@>\textrm{\rm Br}(g)>>
\textrm{\rm Br}(\mathbf E)\endCD$$
is commutative (where the vertical maps were introduced in Definition \ref{3.5}).
\end{notn}
Assume now that $F$ is either a local or a global field. An immediate consequence of the lemma above is the following
\begin{cor}\label{4.6} For every morphism $g:\mathbf E\rightarrow\mathbf E'$ in $\textrm{\rm Prob}(F)$ the diagram:
$$\CD H^1(\Gamma,\textrm{\rm Ker}(\mathbf E')_{ab}^{\vee})
@>{(g^{ab}_1)^{\vee}_*}>>
H^1(\Gamma,\textrm{\rm Ker}(\mathbf E)_{ab}^{\vee})\\
@VVV@VVV\\
\textrm{\rm Br}_1(\mathbf E')@>\textrm{\rm Br}(g)|_{\textrm{\rm Br}_1(\mathbf E')}>>
\textrm{\rm Br}_1(\mathbf E)\endCD$$
is commutative, where $(g_1^{ab})^{\vee}:\textrm{\rm Ker}(\mathbf E')_{ab}^{\vee}\rightarrow \textrm{\rm Ker}(\mathbf E)_{ab}^{\vee}$ is the dual of the abelianization of $g_1$ and the vertical arrows are furnished by the exact sequence in Lemma \ref{2.2}.\qed
\end{cor}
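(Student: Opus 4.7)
The strategy is to deduce this from naturality of the Hochschild--Serre spectral sequence (\ref{2.2.1}) applied to the morphism of short exact sequences
$$\begin{CD}
1 @>>> \textrm{Ker}(\mathbf E) @>i_{\mathbf E}>> \Gamma(\mathbf E) @>\pi_{\mathbf E}>> \Gamma @>>> 1 \\
@. @V{g_1|_{\textrm{Ker}(\mathbf E)}}VV @V{\Gamma(g)}VV @| @. \\
1 @>>> \textrm{Ker}(\mathbf E') @>i_{\mathbf E'}>> \Gamma(\mathbf E') @>\pi_{\mathbf E'}>> \Gamma @>>> 1
\end{CD}$$
whose commutativity is exactly the diagram verified in the proof of the previous lemma. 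Functoriality of Hochschild--Serre produces a morphism (in the direction opposite to $g$) from the spectral sequence for $\mathbf E'$ to the one for $\mathbf E$; on the $E_2^{1,1}$ term the induced map is $H^1(\Gamma,-)$ applied to the coefficient pullback along $g_1|_{\textrm{Ker}(\mathbf E)}$. Under the canonical identification $H^1(\textrm{Ker}(\mathbf E),\overline F^*) = \textrm{Ker}(\mathbf E)_{ab}^{\vee}$ (and similarly for $\mathbf E'$) this pullback is precisely the dual $(g_1^{ab})^{\vee}$, so the induced map on $E_2^{1,1}$, hence on $E^{\infty}_{1,1}$, is $(g_1^{ab})^{\vee}_*$.

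Next, by the previous lemma $\Gamma(g)^*$ respects the two filtrations (\ref{2.2.2}) for $\mathbf E'$ and $\mathbf E$, so it induces well-defined maps between their associated gradeds, and by naturality of the spectral sequence the map on the $(1,1)$-graded piece coincides with the map $(g_1^{ab})^{\vee}_*$ identified above. Unwinding the construction of $j_{\mathbf E}$ in the proof of Lemma \ref{2.2}, the left vertical arrow in the corollary factors as
$$H^1(\Gamma,\textrm{Ker}(\mathbf E)_{ab}^{\vee}) \cong E^{\infty}_{1,1} \cong E^2_1/E^2_2 \hookrightarrow H^2(\Gamma(\mathbf E),\overline F^*)/E^2_2 \twoheadrightarrow \textrm{Br}(\mathbf E),$$
landing in $\textrm{Br}_1(\mathbf E)$ since $E^2_1$ maps to zero in $E^{\infty}_{0,2}$; an analogous description applies to $\mathbf E'$. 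On the other hand $\textrm{Br}(g)$ is by definition induced by $\Gamma(g)^*$ modulo $\pi^*H^2(\Gamma,\overline F^*)$. Stringing these identifications together gives the asserted commutativity.

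The only real obstacle is bookkeeping; the substantive input, naturality of the Lyndon--Hochschild--Serre spectral sequence with respect to morphisms of group extensions, is entirely standard, so this proof should be quite short.
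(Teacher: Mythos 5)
Your proof is correct and takes essentially the same approach the paper has in mind: the corollary is stated as an immediate consequence of the preceding lemma, whose proof exhibits precisely the morphism of short exact sequences that your argument feeds into the naturality of the Hochschild--Serre spectral sequence, and your identifications of the filtration quotients with $H^1(\Gamma,\textrm{Ker}(\cdot)_{ab}^{\vee})$, $\textrm{Br}_1(\cdot)$ and $\textrm{Br}(\cdot)$ are the ones the paper implicitly uses.
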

Assume now that $F$ is a global field.
\begin{lemma}\label{4.7} For every morphism $g:\mathbf E\rightarrow\mathbf E'$ in $\textrm{\rm Prob}(F)$ and for every subset $X\subseteq\textrm{\rm Br}(\mathbf E)$ we have:
$$\textrm{\rm Sol}_{\mathbb A}(g)^{-1}\big(\textrm{\rm Sol}_{\mathbb A}^X(\mathbf E')\big)=
\textrm{\rm Sol}_{\mathbb A}^{\textrm{\rm Br}(g)(X)}(\mathbf E).$$
\end{lemma}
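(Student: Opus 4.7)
The plan is to deduce the lemma from the naturality of the Brauer--Manin pairing with respect to morphisms in $\textrm{Prob}(F)$. More precisely, I will establish the adjunction formula
$$\langle \textrm{Sol}_{\mathbb A}(g)(h), c' \rangle_{\mathbf E'} \;=\; \langle h, \textrm{Br}(g)(c') \rangle_{\mathbf E}$$
for every $h \in \textrm{Sol}_{\mathbb A}(\mathbf E)$ and every $c' \in \textrm{Br}(\mathbf E')$ (interpreting $X \subseteq \textrm{Br}(\mathbf E')$, so that $\textrm{Br}(g)(X) \subseteq \textrm{Br}(\mathbf E)$). Once this is proved, the lemma follows by chasing definitions: $h$ lies in $\textrm{Sol}_{\mathbb A}(g)^{-1}(\textrm{Sol}_{\mathbb A}^X(\mathbf E'))$ iff the left-hand side vanishes for every $c' \in X$, iff the right-hand side vanishes, iff $h \in \textrm{Sol}_{\mathbb A}^{\textrm{Br}(g)(X)}(\mathbf E)$.

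To prove the adjunction formula I lift $c'$ to a class $\widetilde{c}' \in H^2(\Gamma(\mathbf E'), \overline F^*)$, so that $\Gamma(g)^*(\widetilde c')$ is a lift of $\textrm{Br}(g)(c')$ by the very definition of the functor $\textrm{Br}$. Writing $h = \prod_{x} h_x$ and $\textrm{Sol}_{\mathbb A}(g)(h) = \prod_{x} (g_1 \circ h_x)$, both sides become place-by-place sums of local invariants, and it suffices to check that at each $x \in |F|$ the composition
$$\Gamma_x \xrightarrow{s(h_x)} \Gamma_x(\mathbf E_x) \xrightarrow{\textrm{id}_{G_1}\times\iota_x} \Gamma(\mathbf E) \xrightarrow{\Gamma(g)} \Gamma(\mathbf E')$$
equals the composition
$$\Gamma_x \xrightarrow{s(g_1 \circ h_x)} \Gamma_x(\mathbf E'_x) \xrightarrow{\textrm{id}_{G'_1}\times\iota_x} \Gamma(\mathbf E').$$
This is a direct element-wise verification: both maps send $\gamma \in \Gamma_x$ to $(g_1(h_x(\gamma)), \iota_x(\gamma))$, using the definition of $\Gamma(g)$ as the restriction of $g_1 \times \textrm{id}_{\Gamma}$. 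The naturality of the $(\eta_x)_*$ coefficient change and of the invariant $\textrm{inv}_x$ then gives the equality of the $x$-th summands on both sides.

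There is essentially no substantial obstacle; the argument is a bookkeeping exercise in the functoriality of group cohomology combined with the definition of the section $s(h_x)$ in Notation \ref{3.1}. The only delicate point is the summability of both series, but this is already guaranteed by Lemma \ref{3.3} applied to $\widetilde c'$ and to $\Gamma(g)^*(\widetilde c')$ (together with the observation that $g_1 \circ h_x$ is unramified whenever $h_x$ is, so that $\textrm{Sol}_{\mathbb A}(g)$ does indeed land in $\textrm{Sol}_{\mathbb A}(\mathbf E')$).
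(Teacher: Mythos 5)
Your proposal is correct and takes essentially the same approach as the paper: the paper's proof consists of asserting the compatibility formula $(h,\Gamma(g)^*(c))=(\textrm{Sol}_{\mathbb A}(g)(h),c)$ and declaring the rest clear, while you flesh out exactly the element-wise verification of that formula (via the equality of the two compositions $\Gamma_x\to\Gamma(\mathbf E')$) and the bookkeeping that deduces the lemma from it. You also correctly note that the statement's $X\subseteq\textrm{Br}(\mathbf E)$ should read $X\subseteq\textrm{Br}(\mathbf E')$, which is evidently a typo in the paper.
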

\begin{proof} Since for every $h\in\textrm{\rm Sol}_{\mathbb A}(\mathbf E)$ and $c\in H^2(\Gamma(\mathbf E'),\overline F^*)$ we have:
$$(h,\Gamma(g)^*(c))=(\textrm{Sol}_{\mathbb A}(g)(h),c)$$
the claim is clear.
\end{proof}
\begin{defn}\label{4.5} Let $\mathbf E^{ab}$ be the embedding problem given by the diagram:
$$\CD @.\Gamma\\
@.@V\psi VV\\
G_1/[\textrm{Ker}(\mathbf E),\textrm{Ker}(\mathbf E)]
@>\overline{\phi}>>G_2\endCD$$
where $\overline{\phi}$ is the unique homomorphism such that the composition of  the quotient map $e:G_1\rightarrow G_1/[\textrm{Ker}(\mathbf E),\textrm{Ker}(\mathbf E)]$ and $\overline{\phi}$ is $\phi$ and we assume that $\mathbf E$ was given by the diagram (\ref{1.0.1}). 
\end{defn}
\begin{thm}\label{4.9} Assume that $\textrm{\rm char}(F)$ does not divide the order of $\textrm{\rm Ker}(\mathbf E)_{ab}$. Then the following claims are equivalent:
$$\textrm{\rm Sol}(\mathbf E^{ab})\neq0\textrm{ and }
\textrm{\rm Sol}_{\mathbb A}(\mathbf E)\neq0
\Leftrightarrow
\textrm{\rm Sol}_{\mathbb A}^{\Bi}(\mathbf E)\neq0.$$
\end{thm}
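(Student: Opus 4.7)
The plan is to exploit functoriality with respect to the canonical map $e:\mathbf E\to\mathbf E^{ab}$, whose first component is the quotient $e_1:G_1\to G_1/[\textrm{Ker}(\mathbf E),\textrm{Ker}(\mathbf E)]$ and whose second component is the identity on $G_2$. The key observation I will use is that $\textrm{Ker}(\mathbf E^{ab})=\textrm{Ker}(\mathbf E)_{ab}$ by the very construction of $\mathbf E^{ab}$, so the induced map $(e_1)^{ab}:\textrm{Ker}(\mathbf E)_{ab}\to\textrm{Ker}(\mathbf E^{ab})_{ab}$ on kernel abelianizations is simply the identity. Combined with Corollary \ref{4.6} (applied globally and at every place), this will imply that $\textrm{Br}(e)$ restricts to an isomorphism $\B(\mathbf E^{ab})\xrightarrow{\sim}\B(\mathbf E)$: injectivity of the map on $\textrm{Br}_1$-groups is immediate from Corollary \ref{4.6}, and to see surjectivity I will argue that any $c'\in\textrm{Br}_1(\mathbf E^{ab})$ with $\textrm{Br}(e)(c')\in\B(\mathbf E)$ already lies in $\B(\mathbf E^{ab})$, since the local version of Corollary \ref{4.6} applied at each place $x$ shows $\textrm{Br}(e_x)$ is injective on $\textrm{Br}_1(\mathbf E_x^{ab})$.

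Given this identification, Lemma \ref{4.7} will directly yield the crucial equivalence
$$h\in\textrm{Sol}_{\mathbb A}^{\Bi}(\mathbf E)\ \Longleftrightarrow\ \textrm{Sol}_{\mathbb A}(e)(h)\in\textrm{Sol}_{\mathbb A}^{\Bi}(\mathbf E^{ab})$$
for every $h\in\textrm{Sol}_{\mathbb A}(\mathbf E)$. Both directions of the theorem will then reduce to manipulations on the $\mathbf E^{ab}$ side, where Theorem \ref{1.1} is applicable because $\textrm{Ker}(\mathbf E^{ab})=\textrm{Ker}(\mathbf E)_{ab}$ is abelian of order coprime to $\textrm{char}(F)$ by hypothesis.

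For the direction $(\Leftarrow)$, given $h\in\textrm{Sol}_{\mathbb A}^{\Bi}(\mathbf E)$ I will push it forward to $\textrm{Sol}_{\mathbb A}(e)(h)\in\textrm{Sol}_{\mathbb A}^{\Bi}(\mathbf E^{ab})$ via the equivalence, and then invoke Theorem \ref{1.1} on $\mathbf E^{ab}$ to conclude $\textrm{Sol}(\mathbf E^{ab})\neq\emptyset$; non-emptiness of $\textrm{Sol}_{\mathbb A}(\mathbf E)$ is tautological.

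For the direction $(\Rightarrow)$, I will pick any $h\in\textrm{Sol}_{\mathbb A}(\mathbf E)$ (non-empty by hypothesis) and argue that its push-forward automatically lies in $\textrm{Sol}_{\mathbb A}^{\Bi}(\mathbf E^{ab})$. The point is that by Definition \ref{4.8} the value $\langle\textrm{Sol}_{\mathbb A}(e)(h),b\rangle=\beta_{\mathbf E^{ab}}(b)$ is independent of the chosen adelic solution, so it may instead be evaluated on $r(\widetilde\psi)$ for any global solution $\widetilde\psi$ of $\mathbf E^{ab}$, which exists by hypothesis. Lemma \ref{3.6} then forces $r(\widetilde\psi)\in\textrm{Sol}_{\mathbb A}^{\textrm{Br}}(\mathbf E^{ab})\subseteq\textrm{Sol}_{\mathbb A}^{\Bi}(\mathbf E^{ab})$, so $\beta_{\mathbf E^{ab}}(b)=0$ for every $b\in\B(\mathbf E^{ab})$, and the equivalence above concludes $h\in\textrm{Sol}_{\mathbb A}^{\Bi}(\mathbf E)$. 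The only delicate step in the whole argument is verifying that $\textrm{Br}(e)$ genuinely identifies $\B(\mathbf E^{ab})$ with $\B(\mathbf E)$ rather than merely mapping one into the other; once this surjectivity is in place, everything else is formal.
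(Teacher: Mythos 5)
Your proposal is correct and is essentially the paper's own argument: use the morphism $\mathbf e:\mathbf E\to\mathbf E^{ab}$, invoke Corollary~\ref{4.6} to see that $\textrm{Br}(\mathbf e)$ restricts to an isomorphism $\B(\mathbf E^{ab})\to\B(\mathbf E)$, feed this into Lemma~\ref{4.7} to get $\textrm{Sol}_{\mathbb A}^{\Bi}(\mathbf E)=\textrm{Sol}_{\mathbb A}(\mathbf e)^{-1}\big(\textrm{Sol}_{\mathbb A}^{\Bi}(\mathbf E^{ab})\big)$, and then apply Theorem~\ref{1.1} to $\mathbf E^{ab}$ in one direction and the vanishing of $\b_{\mathbf E^{ab}}$ (when $\textrm{Sol}(\mathbf E^{ab})\neq\emptyset$) in the other. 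The only cosmetic difference is that you deduce the identification $\B(\mathbf E^{ab})\cong\B(\mathbf E)$ via a somewhat roundabout local injectivity argument, whereas the cleanest justification is simply that Corollary~\ref{4.6} identifies $\textrm{Br}(\mathbf e)|_{\textrm{Br}_1(\mathbf E^{ab})}$ with the identity on $H^1(\Gamma,\textrm{Ker}(\mathbf E)_{ab}^{\vee})$ (since $(e_1^{ab})^{\vee}=\textrm{id}$), which by Proposition~\ref{2.7} carries $\Sh^1$ to $\Sh^1$, i.e.\ $\B(\mathbf E^{ab})$ to $\B(\mathbf E)$, bijectively.
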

\begin{proof} Let $\mathbf e:\mathbf E\rightarrow\mathbf E^{ab}$ be the map given by the homomorphism $e:G_1\rightarrow G_1/[\textrm{Ker}(\mathbf E),\textrm{Ker}(\mathbf E)]$ (introduced in Definition \ref{4.5}) and the identity map $G_2\rightarrow G_2$. By Corollary \ref{4.6} the map $\textrm{\rm Br}(\mathbf e)|_{\Bi(\mathbf E^{ab})}:\B(\mathbf E^{ab})\rightarrow\B(\mathbf E)$ is an isomorphism. Hence Lemma \ref{4.7} implies that
\begin{equation}\label{4.9.1}
\textrm{\rm Sol}_{\mathbb A}^{\Bi}(\mathbf E)=
\textrm{\rm Sol}_{\mathbb A}(\mathbf e)^{-1}
\big(\textrm{\rm Sol}_{\mathbb A}^{\Bi}(\mathbf E^{ab})\big).
\end{equation}
First assume that $\textrm{\rm Sol}_{\mathbb A}^{\Bi}(\mathbf E)\neq\emptyset$. Obviously in this case $\textrm{\rm Sol}_{\mathbb A}(\mathbf E)\neq\emptyset$ and by (\ref{4.9.1}) we have:
$$\emptyset\neq\textrm{\rm Sol}_{\mathbb A}(\mathbf e)
\big(\textrm{\rm Sol}_{\mathbb A}^{\Bi}(\mathbf E)\big)\subseteq
\textrm{\rm Sol}_{\mathbb A}^{\Bi}(\mathbf E^{ab})$$
so Theorem \ref{1.1} implies that the set $\textrm{\rm Sol}(\mathbf E^{ab})$ is non-empty, too. Now assume that $\textrm{\rm Sol}_{\mathbb A}(\mathbf E)\neq\emptyset$ and $\textrm{\rm Sol}(\mathbf E^{ab})\neq\emptyset$. Then the homomorphism $\b_{\mathbf E^{ab}}$ is zero hence $\textrm{\rm Sol}_{\mathbb A}^{\Bi}(\mathbf E^{ab})=\textrm{\rm Sol}_{\mathbb A}(\mathbf E^{ab})$. So (\ref{4.9.1}) implies that
$$\textrm{\rm Sol}_{\mathbb A}^{\Bi}(\mathbf E)=
\textrm{\rm Sol}_{\mathbb A}(\mathbf e)^{-1}
\big(\textrm{\rm Sol}_{\mathbb A}(\mathbf E^{ab})\big)=
\textrm{\rm Sol}_{\mathbb A}(\mathbf E)\neq\emptyset\textrm{.}$$
\end{proof}
\begin{defn} Let $\overline G_1$ denote the quotient group $G_1/[\textrm{Ker}(\mathbf E),\textrm{Ker}(\mathbf E)]$ where we continue to use the notation above. For every $\pi\in\textrm{\rm Sol}(\mathbf E^{ab})$ let $\mathbf E_{\pi}$ be the embedding problem given by the diagram:
$$\CD @.\Gamma\\
@.@V\pi VV\\
G_1
@>e>>\overline G_1\endCD$$
Moreover let $r_{\pi}:\textrm{\rm Sol}_{\mathbb A}(\mathbf E_{\pi})\rightarrow
\textrm{\rm Sol}_{\mathbb A}(\mathbf E)$ denote the tautological inclusion. The following result is analogous to the statement (see \cite{CS}) that the descent obstruction for abelian covers is equivalent to the algebraic Brauer-Manin obstruction:
\end{defn}
\begin{thm}\label{4.10} Assume that $\textrm{\rm char}(F)$ does not divide the order of $\textrm{\rm Ker}(\mathbf E)_{ab}$. Then we have:
$$\bigcup_{\pi\in\textrm{\rm Sol}(\mathbf E^{ab})}
r_{\pi}\big(\textrm{\rm Sol}_{\mathbb A}(\mathbf E_{\pi})\big)=
\textrm{\rm Sol}_{\mathbb A}^{\textrm{\rm Br}_1}(\mathbf E).$$
\end{thm}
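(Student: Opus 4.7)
The plan is to reduce the theorem to an application of Theorem \ref{1.2} to the abelianized problem $\mathbf E^{ab}$, leveraging the functoriality established in Lemma \ref{4.7} and Corollary \ref{4.6}. First I would reinterpret the left-hand side as a union of fibres of $\textrm{\rm Sol}_{\mathbb A}(\mathbf e)$ over the image of $r$. By construction, an adelic solution of $\mathbf E_\pi$ is an adelic collection $(h_x)$ of local solutions of $\mathbf E$ such that $e\circ h_x = \iota_x\circ\pi$ for every $x\in|F|$, and $r_\pi$ sends this collection to itself viewed inside $\textrm{\rm Sol}_{\mathbb A}(\mathbf E)$. Consequently
$$\bigcup_{\pi\in\textrm{\rm Sol}(\mathbf E^{ab})} r_\pi\bigl(\textrm{\rm Sol}_{\mathbb A}(\mathbf E_\pi)\bigr) = \textrm{\rm Sol}_{\mathbb A}(\mathbf e)^{-1}\bigl(r(\textrm{\rm Sol}(\mathbf E^{ab}))\bigr),$$
where $\mathbf e:\mathbf E\to\mathbf E^{ab}$ is the morphism used in the proof of Theorem \ref{4.9}.

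Since $\textrm{\rm Ker}(\mathbf E^{ab}) = \textrm{\rm Ker}(\mathbf E)_{ab}$ is abelian and has order coprime to $\textrm{\rm char}(F)$ by hypothesis, Theorem \ref{1.2} applies to $\mathbf E^{ab}$ and gives $r(\textrm{\rm Sol}(\mathbf E^{ab})) = \textrm{\rm Sol}_{\mathbb A}^{\textrm{\rm Br}_1}(\mathbf E^{ab})$. Combining this with Lemma \ref{4.7}, I would rewrite the right-hand side above as
$$\textrm{\rm Sol}_{\mathbb A}(\mathbf e)^{-1}\bigl(\textrm{\rm Sol}_{\mathbb A}^{\textrm{\rm Br}_1}(\mathbf E^{ab})\bigr) = \textrm{\rm Sol}_{\mathbb A}^{\textrm{\rm Br}(\mathbf e)(\textrm{\rm Br}_1(\mathbf E^{ab}))}(\mathbf E),$$
so the theorem reduces to proving the equality $\textrm{\rm Br}(\mathbf e)(\textrm{\rm Br}_1(\mathbf E^{ab})) = \textrm{\rm Br}_1(\mathbf E)$ of subgroups of $\textrm{\rm Br}(\mathbf E)$.

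To establish this last equality I would invoke Corollary \ref{4.6}. Under the isomorphisms of Lemma \ref{2.2}, the restriction $\textrm{\rm Br}(\mathbf e)|_{\textrm{\rm Br}_1(\mathbf E^{ab})}$ is identified with the map on $H^1(\Gamma,-)$ induced by the dual of the abelianization of $e|_{\textrm{\rm Ker}(\mathbf E)}:\textrm{\rm Ker}(\mathbf E)\to\textrm{\rm Ker}(\mathbf E^{ab})$. But $\textrm{\rm Ker}(\mathbf E^{ab}) = \textrm{\rm Ker}(\mathbf E)_{ab}$ is already abelian, so this abelianized map is tautologically the identity $\textrm{\rm Ker}(\mathbf E)_{ab}\to\textrm{\rm Ker}(\mathbf E)_{ab}$; hence both its dual and the resulting map on cohomology are also the identity, and $\textrm{\rm Br}(\mathbf e)$ restricts to an isomorphism $\textrm{\rm Br}_1(\mathbf E^{ab})\to\textrm{\rm Br}_1(\mathbf E)$. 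The main obstacle in this argument is purely organizational: one must verify that the identifications from Lemma \ref{2.2} for $\mathbf E$ and $\mathbf E^{ab}$ are compatible in the way predicted by Corollary \ref{4.6}, but this naturality is exactly the content of that corollary, so the proof concludes without further analytic or cohomological input.
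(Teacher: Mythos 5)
Your proposal is correct and follows essentially the same route as the paper's proof: reduce the left-hand side to $\textrm{\rm Sol}_{\mathbb A}(\mathbf e)^{-1}\bigl(r(\textrm{\rm Sol}(\mathbf E^{ab}))\bigr)$, apply Theorem \ref{1.2} to $\mathbf E^{ab}$ (valid since $\textrm{\rm Ker}(\mathbf E^{ab})=\textrm{\rm Ker}(\mathbf E)_{ab}$ is abelian of order coprime to $\textrm{\rm char}(F)$), and then use Lemma \ref{4.7} together with the fact from Corollary \ref{4.6} that $\textrm{\rm Br}(\mathbf e)$ restricts to an isomorphism $\textrm{\rm Br}_1(\mathbf E^{ab})\to\textrm{\rm Br}_1(\mathbf E)$. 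The only minor difference is one of presentation (you start from the LHS and rewrite it as a preimage, whereas the paper starts from the RHS), and your closing phrase that the theorem ``reduces to'' the equality $\textrm{\rm Br}(\mathbf e)(\textrm{\rm Br}_1(\mathbf E^{ab}))=\textrm{\rm Br}_1(\mathbf E)$ is slightly overstated (that equality is sufficient, not a priori necessary), but since you then prove it, the argument is complete.
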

\begin{proof} The homomorphism $\textrm{\rm Br}(\mathbf e)|_{\textrm{Br}_1(\mathbf E^{ab})}:\textrm{Br}_1(\mathbf E^{ab})\rightarrow
\textrm{Br}_1(\mathbf E)$ is an isomorphism by Corollary \ref{4.6}, where $\mathbf e:\mathbf E\rightarrow\mathbf E^{ab}$ is the map introduced in the proof above. Therefore Lemma \ref{4.7} implies that
\begin{equation}\label{4.10.1}
\textrm{\rm Sol}_{\mathbb A}^{\textrm{Br}_1}(\mathbf E)=
\textrm{\rm Sol}_{\mathbb A}(\mathbf e)^{-1}
\big(\textrm{\rm Sol}_{\mathbb A}^{\textrm{Br}_1}(\mathbf E^{ab})\big)=
\textrm{\rm Sol}_{\mathbb A}(\mathbf e)^{-1}
\big(r(\textrm{\rm Sol}(\mathbf E^{ab}))\big),
\end{equation}
where in the second equation we used Theorem \ref{1.2}. For every $\pi\in\textrm{Sol}(\mathbf E^{ab})$ we have $\textrm{\rm Sol}_{\mathbb A}(\mathbf e)^{-1}
(r(\pi))=r_{\pi}\big(\textrm{\rm Sol}_{\mathbb A}(\mathbf E_{\pi})\big)$, and hence (\ref{4.10.1}) implies that
$$\textrm{\rm Sol}_{\mathbb A}^{\textrm{\rm Br}_1}(\mathbf E)=
\bigcup_{\pi\in\textrm{\rm Sol}(\mathbf E^{ab})}
r_{\pi}\big(\textrm{\rm Sol}_{\mathbb A}(\mathbf E_{\pi})\big)\textrm{.}$$
\end{proof}

\section{A local-global principle for cohomology classes}

\begin{notn} Let $M$ be a discrete finite abelian $\Gamma$-module whose order is not divisible by char$(F)$. For every $k\in\mathbb N$ and for every $x\in|F|$ where $M$ is unramified let $H^k_{un}(\Gamma_x,M)$ denote the image of the inflation map $H^k(\widehat{\mathbb Z},M)\rightarrow
H^k(\Gamma_x,M)$. Moreover for every $k$ as above let $H^k_{\mathbb A}(F,M)$ denote the subgroup:
$$H^k_{\mathbb A}(F,M)=\{\prod_{x\in|F|}\!\!c_x|\textrm{$c_x\in H^k_{un}(\Gamma_x,M)$ for almost all $x\in|F|$}\}\leq
\prod_{x\in|F|}H^k(\Gamma_x,M).$$
Let $\{\cdot,\cdot\}$ denote the duality pairing:
$$\CD H^1(\Gamma_x,M)\times H^1(\Gamma_x,M^{\vee})
@>\cup>>H^2(\Gamma_x,M\otimes M^{\vee})@>\textrm{ev}_*>>
H^2(\Gamma_x,\overline F_x^*)\endCD$$
of local class field theory where $\textrm{ev}_*$ is the map induced by the evaluation map $\textrm{ev}:M\otimes M^{\vee}
\rightarrow\overline F_x^*$.
\end{notn}
\begin{lemma} For every $\prod_{x\in|F|}c_x\in H^1_{\mathbb A}(F,M)$ and $\prod_{x\in|F|}d_x\in H^1_{\mathbb A}(F,M^{\vee})$ we have:
$$\{c_x,d_x\}=0$$
for almost all $x\in|F|$.
\end{lemma}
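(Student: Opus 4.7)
The plan is to produce a finite set $S \subseteq |F|$ outside of which $\{c_x, d_x\}$ vanishes identically, and to deduce this vanishing from the standard fact that $\widehat{\mathbb Z}$ has strict cohomological dimension one on torsion coefficients. Concretely, let $S$ consist of the archimedean places together with the finitely many non-archimedean $x$ where at least one of the following fails: (i) $M$ is unramified at $x$; (ii) the residue characteristic at $x$ is coprime to $|M|$; (iii) $c_x \in H^1_{un}(\Gamma_x,M)$; (iv) $d_x \in H^1_{un}(\Gamma_x,M^{\vee})$. The set $S$ is finite: (i) holds for all but finitely many $x$ since $M$ is a finite $\Gamma$-module, (ii) fails at only finitely many places, and (iii)--(iv) are the defining constraints of $H^1_{\mathbb A}(F,M)$ and $H^1_{\mathbb A}(F,M^{\vee})$.

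Fix now $x \notin S$. Because $M$ is unramified at $x$ and the residue characteristic at $x$ does not divide $|M|$, every root of unity of order dividing $\exp(M)$ lies in $F_x^{un}$. A direct check then shows that $I_x$ acts trivially on $M^{\vee} = \textrm{Hom}(M,\overline F^*)$, and hence on $M \otimes M^{\vee}$ as well. So $M$, $M^{\vee}$ and $M \otimes M^{\vee}$ are all unramified at $x$, and by (iii), (iv) we may write $c_x = \textrm{Inf}(\tilde c_x)$ and $d_x = \textrm{Inf}(\tilde d_x)$ for classes $\tilde c_x \in H^1(\widehat{\mathbb Z},M)$ and $\tilde d_x \in H^1(\widehat{\mathbb Z},M^{\vee})$, where $\textrm{Inf}$ denotes inflation along $\Gamma_x \twoheadrightarrow \widehat{\mathbb Z} = \Gamma_x/I_x$.

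Using functoriality of the cup product with respect to inflation we then obtain
$$c_x \cup d_x = \textrm{Inf}(\tilde c_x \cup \tilde d_x) \in H^2(\Gamma_x, M \otimes M^{\vee}).$$
However $\tilde c_x \cup \tilde d_x$ lives in $H^2(\widehat{\mathbb Z}, M \otimes M^{\vee})$, which vanishes because $\widehat{\mathbb Z}$ has cohomological dimension one on any torsion module and $M \otimes M^{\vee}$ is finite. Hence $c_x \cup d_x = 0$, and applying $\textrm{ev}_*$ we conclude $\{c_x, d_x\} = 0$ for every $x \notin S$. The only mild obstacle in executing this plan is the bookkeeping regarding which places need to be excluded; no input beyond the vanishing of $H^2(\widehat{\mathbb Z}, N)$ for finite $N$ and functoriality of cup product is needed.
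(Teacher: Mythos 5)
Your proof is correct and follows the same basic reduction the paper uses: trim to a finite exceptional set $S$ of places, observe that outside $S$ both $c_x$ and $d_x$ are inflated from $\widehat{\mathbb Z}$, and invoke a vanishing statement for an $H^2$ of $\widehat{\mathbb Z}$. The only real difference is \emph{where} you apply the vanishing. You kill the cup product $c_x\cup d_x$ already in $H^2(\widehat{\mathbb Z},M\otimes M^{\vee})=0$ (using that $\widehat{\mathbb Z}$ has cohomological dimension $1$ on torsion coefficients) and only afterwards push forward by $\mathrm{ev}_*$. The paper instead pushes forward first and asserts that $\{c_x,d_x\}$ lies in the image of the inflation $H^2(\widehat{\mathbb Z},(\overline F_x^*)^{I_x})\to H^2(\Gamma_x,\overline F_x^*)$, which it then declares to be trivial. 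That last assertion is actually off: $H^2(\widehat{\mathbb Z},(\overline F_x^*)^{I_x})$ is the unramified Brauer group of $F_x$, isomorphic to $\mathbb Q/\mathbb Z$, not $0$. The group the paper really wants there is $H^2(\widehat{\mathbb Z},\mathcal O_{F_x^{un}}^*)=0$, which works because the image of $\mathrm{ev}$ consists of roots of unity of order prime to the residue characteristic, hence lands in the unramified units. Your version of the argument is cleaner precisely because applying the vanishing before $\mathrm{ev}_*$ sidesteps this bookkeeping entirely; it also makes explicit the step the paper leaves implicit, namely that $M^{\vee}$ and $M\otimes M^{\vee}$ are unramified outside $S$, via $\mu_{\exp(M)}\subset F_x^{un}$ when the residue characteristic is coprime to $|M|$.
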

\begin{proof} The proof of the claim above is essentially the same as the proof of Lemma \ref{3.3}. We may assume without the loss of generality that at $x$ both $M$ and $M^{\vee}$ are unramified and the cohomology classes $c_x,d_x$ are in $H^1_{un}(\Gamma_x,M)$ and in $H^1_{un}(\Gamma_x,M^{\vee})$, respectively. In this case the cohomology class $\{c_x,d_d\}$ lies in the image of the inflation map $H^2(\widehat{\mathbb Z},(\overline F_x^*)^{I_x})\rightarrow H^2(\Gamma_x,\overline F_x^*)$ where $I_x\triangleleft\Gamma_x$ is the inertia subgroup. Because the group $H^2(\widehat{\mathbb Z},(\overline F_x^*)^{I_x})$ is trivial, the claim is now clear.
\end{proof}
\begin{notn} By the lemma above the pairing:
$$[\cdot,\cdot]:H^1_{\mathbb A}(F,M)\times H^1_{\mathbb A}(F,M^{\vee})\longrightarrow\mathbb Q/\mathbb Z$$
given by the rule
$$[\prod_{x\in|F|}c_x,\prod_{x\in|F|}d_x]=\sum_{x\in|F|}\textrm{\rm inv}_x(\{c_x,d_x\})$$
is well-defined because all but finitely many of the summands are zero. For every $k\in\mathbb N$ and for every $M$ as above let
$$\widetilde H^k(F,M)=\textrm{Im}\left(
\prod_{x\in|F|}\iota_x^*:H^k(\Gamma,M)\rightarrow
\prod_{x\in|F|}H^k(\Gamma_x,M)\right)\leq H^k_{\mathbb A}(F,M).$$
\end{notn}
\begin{thm}\label{5.4} Under the pairing the $[\cdot,\cdot]$ annulator of $\widetilde H^1(F,M^{\vee})$ is $\widetilde H^1(F,M)$.
\end{thm}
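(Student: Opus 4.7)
The plan is to recognize Theorem \ref{5.4} as a repackaging of the central exactness statement in Poitou-Tate duality. The pairing $[\cdot,\cdot]$ on $H^1_{\mathbb A}(F,M)\times H^1_{\mathbb A}(F,M^{\vee})$ is assembled from the local Tate duality pairings $\{\cdot,\cdot\}$, so we need to show that the image of the diagonal restriction $\prod_x\iota_x^*:H^1(\Gamma,M)\to\prod_x H^1(\Gamma_x,M)$ and its counterpart for $M^{\vee}$ are exact annihilators of each other under $[\cdot,\cdot]$. Since $\mathrm{char}(F)\nmid|M|$, the module $M^{\vee}$ is defined and local Tate duality identifies $H^1(\Gamma_x,M)$ with the Pontryagin dual of $H^1(\Gamma_x,M^{\vee})$ via $\{\cdot,\cdot\}$ (and similarly swaps the unramified subgroups at good $x$), so the target duality makes sense.

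First I would treat the easy inclusion $\widetilde H^1(F,M)\subseteq\widetilde H^1(F,M^{\vee})^{\perp}$. Given $c\in H^1(\Gamma,M)$ and $d\in H^1(\Gamma,M^{\vee})$, form the global cup product $c\cup d\in H^2(\Gamma,M\otimes M^{\vee})$ and push it forward by $\mathrm{ev}_*$ to obtain a global Brauer class $\beta\in H^2(\Gamma,\overline F^*)$. Because cup product is natural under restriction, $\iota_x^*(\beta)=\mathrm{ev}_*(\iota_x^*(c)\cup\iota_x^*(d))=\{\iota_x^*(c),\iota_x^*(d)\}$ for every $x$. Applying the reciprocity law $\sum_x\mathrm{inv}_x\iota_x^*=0$ on $H^2(\Gamma,\overline F^*)$ yields $[\prod_x\iota_x^*(c),\prod_x\iota_x^*(d)]=0$, exactly as in the proof of Lemma \ref{3.4}.

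For the reverse inclusion $\widetilde H^1(F,M^{\vee})^{\perp}\subseteq\widetilde H^1(F,M)$ I would invoke the nine-term Poitou-Tate exact sequence (e.g.\ Milne \cite{Mi}, Theorem 4.10 and the ensuing duality theorem), whose middle portion reads
$$H^1(\Gamma,M)\mathop{\longrightarrow}^{\prod\iota_x^*} H^1_{\mathbb A}(F,M)\longrightarrow H^1(\Gamma,M^{\vee})^*,$$
where the second arrow is dual to $\prod\iota_x^*:H^1(\Gamma,M^{\vee})\to H^1_{\mathbb A}(F,M^{\vee})$ composed with the local duality identification $H^1_{\mathbb A}(F,M)\cong H^1_{\mathbb A}(F,M^{\vee})^*$ provided by $[\cdot,\cdot]$. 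The kernel of the second arrow is by definition $\widetilde H^1(F,M^{\vee})^{\perp}$, and exactness at the middle identifies this kernel with the image $\widetilde H^1(F,M)$ of the first arrow. This gives the desired equality.

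The substantive content lies entirely in the second step, namely the deep Poitou-Tate duality theorem; the only real subtlety is to confirm that the restricted product $H^1_{\mathbb A}(F,M)$, defined here with respect to the subgroups $H^1_{un}(\Gamma_x,M)$ at non-archimedean places of good reduction, coincides with the restricted product used in the classical formulation of Poitou-Tate, and that the local pairing $\{\cdot,\cdot\}$ matches the local Tate pairing making the unramified subgroups exact annihilators of each other for almost all $x$. Both facts are standard once $\mathrm{char}(F)\nmid|M|$, so the proof reduces cleanly to citing the known result.
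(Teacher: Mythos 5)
Your proof is correct and rests on the same underlying result (Poitou--Tate duality), but pitched at a higher level of abstraction than the paper. The easy inclusion via the reciprocity law is identical. For the reverse inclusion, you cite the global nine-term Poitou--Tate exact sequence over all places of $F$ and read off the equality $\widetilde H^1(F,M^{\vee})^{\perp}=\widetilde H^1(F,M)$ from exactness at the middle $\prod'_x H^1(\Gamma_x,M)$ term. The paper instead cites only the $S$-version (Milne, Theorem I.4.10(c), which is phrased for $\mathcal G_S=\Gal(F_S/F)$ and a finite set $S$) as Lemma \ref{5.6}, then performs the passage to the limit over $S$ explicitly: Lemma \ref{5.7} provides an $S$ large enough that $\Sh^1_S(F,M)$ has trivial image in $H^1_{\mathbb A}(F,M)$, which is exactly what guarantees that the classes $c_R$ for various $R\supseteq S$ glue to a single well-defined preimage. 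This limit argument is precisely what the literature does to deduce the all-places nine-term sequence from the $S$-version, so your citation is legitimate but sweeps that step under the rug. One small caution: the reference you give (``Milne, Theorem 4.10 and the ensuing duality theorem'') is itself the $S$-version; if you want to cite the all-places sequence cleanly you should point to a source that states it in that form, or acknowledge that the passage from $S$ to all places is the content of the paper's Lemma \ref{5.7} and the concluding limit argument. Your closing paragraph correctly identifies the remaining compatibility checks (matching restricted products and local pairings, and the unramified subgroups being exact annihilators almost everywhere), so no gap in substance -- the paper simply makes self-contained what you delegate to a citation.
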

\begin{proof} Let $c\in H^1(\Gamma,M)$ and $d\in H^1(\Gamma,M^{\vee})$. for every $x\in|F|$ we have:
$$\{\iota_x^*( c ),\iota_x^*(d)\}=(\eta_x)_*\iota_x^*(\textrm{ev}_*(c\cup d))$$
where we let $\textrm{ev}_*$ also denote the map $H^2(\Gamma,M\otimes M^{\vee})\rightarrow H^2(\Gamma,\overline F^*)$ induced by the evaluation map $\textrm{ev}:M\otimes M^{\vee}\rightarrow\overline F^*$. Hence
$$[\prod_{x\in|F|}\iota_x^*(c),\prod_{x\in|F|}\iota_x^*(d)]=\sum_{x\in|F|}\textrm{inv}_x((\eta_x)_*\iota_x^*(\textrm{ev}_*(c\cup d)))=0$$
by the reciprocity law for Brauer groups over global fields applied to the cohomology class $\textrm{ev}_*(c\cup d)\in H^2(\Gamma,\overline F^*)$. So $\widetilde H^1(F,M)$ is contained by the annulator of $\widetilde H^1(F,M^{\vee})$ with respect to $[\cdot,\cdot]$. In order to continue our proof we need to introduce some notation.\let\qed\relax
\end{proof}
\begin{defn} Let $M$ be as above and let $S\subset|F|$ be a finite non-empty set that contains every archimedean place of $F$ and every place where $M$ is ramified. Let $\mathcal O_{F,S}\subset F$ denote the ring of $S$-integers in $F$. Because char$(F)$ does not divide the order of $M$ we may also assume that the latter is a unit in $\mathcal O_{F,S}$ by enlarging $S$, if it is necessary. For every $k\in\mathbb N$ and for every $M$ as above let
$$H^k_S(F,M)=\{c\in H^k(\Gamma,M)|
\iota_x^*(c)\in H^k_{un}(\Gamma_x,M)\ (\forall x\not\in S)\}\leq
H^k(\Gamma,M).$$
Let $\widetilde H^k_S(F,M)$ denote the image of $H^k_S(F,M)$ in $\prod_{x\in S}H^k(\Gamma_x,M)$ with respect to $\prod_{x\in S}\iota_x^*$. Finally let
$$[\cdot,\cdot]_S:\prod_{x\in S}H^k(\Gamma_x,M)\times\prod_{x\in S}H^k(\Gamma_x,M^{\vee})\longrightarrow\mathbb Q/\mathbb Z$$
denote the pairing given by the rule
$$[\prod_{x\in S}c_x,\prod_{x\in S}d_x]\mapsto\sum_{x\in S}\textrm{\rm inv}_x(\{c_x,d_x\}).$$
\end{defn}
\begin{lemma}\label{5.6} Under the pairing $[\cdot,\cdot]_S$ the annulator of $\widetilde H^1_S(\Gamma,M^{\vee})$ is $\widetilde H^1_S(\Gamma,M)$.
\end{lemma}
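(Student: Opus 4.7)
The plan is to derive Lemma \ref{5.6} from the middle segment of the Poitou--Tate nine-term exact sequence. First I would identify $H^k_S(F,M)$ with the Galois cohomology $H^k(\Gamma_S,M)$, where $\Gamma_S = \Gal(F_S/F)$ and $F_S \subseteq \overline F$ denotes the maximal subextension unramified outside $S$; since $M$ is unramified outside $S$ and $|M|$ is invertible in $\mathcal O_{F,S}$ by our choice of $S$, this identification is standard (see, e.g., \cite{Mi}, Ch.\ I). The analogous identification applies to $M^{\vee}$.

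Next I would invoke the relevant segment of the Poitou--Tate sequence,
\begin{equation*}
H^1(\Gamma_S,M) \stackrel{\alpha}{\longrightarrow} \prod_{x\in S} H^1(F_x,M) \stackrel{\beta}{\longrightarrow} H^1(\Gamma_S,M^{\vee})^{*},
\end{equation*}
where $\alpha = \prod_{x\in S}\iota_x^{*}$ and $\beta$ sends a family $(c_x)_{x\in S}$ to the functional $d \mapsto \sum_{x\in S}\textrm{inv}_x(\{c_x,\iota_x^{*}(d)\})$ on $H^1(\Gamma_S,M^{\vee})$. By the very definitions of $\widetilde H^1_S(F,M)$ and of $[\cdot,\cdot]_S$, the image of $\alpha$ is $\widetilde H^1_S(F,M)$, and the kernel of $\beta$ is precisely the $[\cdot,\cdot]_S$-annulator of $\widetilde H^1_S(F,M^{\vee})$. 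Exactness at $\prod_{x\in S}H^1(F_x,M)$ therefore delivers the claimed equality in both directions at once.

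The argument is in essence a citation of Poitou--Tate duality, so no serious conceptual obstacle arises. The one piece of bookkeeping worth flagging is the matching of the definition of $H^1_S(F,M)$, namely global classes with unramified local restrictions outside $S$, with the intrinsic Galois cohomology $H^1(\Gamma_S,M)$; this step is where the hypotheses that $\mathrm{char}(F)\nmid |M|$ and that $S$ contains every archimedean place together with every place of ramification of $M$ are genuinely used. Once these identifications are in place, the desired perfect-duality statement is a direct reading of the nine-term sequence.
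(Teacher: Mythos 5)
Your proof is correct and follows essentially the same route as the paper's: the paper cites part $(c)$ of Theorem 4.10 in Milne's \emph{Arithmetic Duality Theorems}, which is exactly the Poitou--Tate nine-term exact sequence whose middle three terms you write out, and reads off the lemma from exactness at $\prod_{x\in S}H^1(\Gamma_x,M)$. You are slightly more explicit than the paper about the identification of $H^1_S(F,M)$ (classes unramified outside $S$) with $H^1(\Gamma_S,M)$, which the paper's citation of Milne implicitly relies on, but the argument is the same.
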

\begin{proof} By part $(c)$ of Theorem 4.10 of \cite{Mi} on pages 70-71 the sequence:
$$\CD H^1_S(F,M)@>\prod_{x\in S}\iota_x^*>>\prod_{x\in S}H^1(\Gamma_x,M)
@>{c\mapsto[c,\cdot]_S}>>H^1_S(F,M^{\vee})^*\endCD$$
is exact where the superscript $*$ denotes the dual Hom$(\cdot,\mathbb Q/\mathbb Z)$.
\end{proof}
Let $\Sh^1_S(F,M)$ denote the kernel of $\prod_{x\in S}\iota_x^*$ in $H^1_S(F,M)$.
\begin{lemma}\label{5.7} There is a finite subset $S\subset|F|$ of the type considered above such that the image of $\Sh^1_S(F,M)$ in $H^1_{\mathbb A}(F,M)$ with respect to $\prod_{x\in|F|}\iota_x^*$ is trivial.
\end{lemma}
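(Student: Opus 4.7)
The plan is to enlarge the given $S$ so that $\Sh^1_S(F,M)$ collapses onto the genuine Tate--Shafarevich group $\Sh^1(F,M) = \ker\bigl(H^1(\Gamma,M) \to \prod_{v \in |F|} H^1(\Gamma_v,M)\bigr)$. Once this is achieved, the conclusion is immediate, since $\Sh^1(F,M)$ has trivial image in $H^1_{\mathbb A}(F,M)$ by definition.

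The first step, which is the conceptual heart of the argument, is a monotonicity statement: if $S_0 \subseteq T$ are both finite sets of the type considered in the previous definition, then $\Sh^1_T(F,M) \subseteq \Sh^1_{S_0}(F,M)$. To verify this, take any $c \in \Sh^1_T(F,M)$; it is unramified outside $T$ (by $c \in H^1_T(F,M)$) and vanishes at every $v \in T$. At each $v \in T \setminus S_0$ the vanishing $\iota_v^*(c) = 0$ in particular lies in $H^1_{un}(\Gamma_v,M)$, so $c$ is unramified at $v$ as well. Hence $c$ is unramified outside $S_0$, i.e. $c \in H^1_{S_0}(F,M)$, and the vanishing at all $v \in S_0 \subseteq T$ comes for free, giving $c \in \Sh^1_{S_0}(F,M)$.

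The second step uses finiteness. The group $H^1_{S_0}(F,M)$ is finite (a standard Hermite--Minkowski style result, since $M$ is finite of order invertible on $\mathcal O_{F,S_0}$), so $\Sh^1_{S_0}(F,M)$ is finite as well. For each $c \in \Sh^1_{S_0}(F,M) \setminus \Sh^1(F,M)$, pick a place $v_c \in |F|$ with $\iota_{v_c}^*(c) \neq 0$; because $c$ is killed at every place of $S_0$, one must have $v_c \notin S_0$. Set $S := S_0 \cup \{v_c : c \in \Sh^1_{S_0}(F,M) \setminus \Sh^1(F,M)\}$, a finite enlargement of $S_0$ still of the required type.

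To finish, observe that every $c \in \Sh^1_{S_0}(F,M) \setminus \Sh^1(F,M)$ is excluded from $\Sh^1_S(F,M)$ because $v_c \in S$ and $\iota_{v_c}^*(c) \neq 0$. Combined with the monotonicity $\Sh^1_S(F,M) \subseteq \Sh^1_{S_0}(F,M)$ from the first step, this forces $\Sh^1_S(F,M) \subseteq \Sh^1(F,M)$; the reverse inclusion is automatic. The only ingredient that is not pure bookkeeping is the monotonicity, and the only place where something could fail is if the choice of $v_c$ were impossible --- but this is precisely what it means for $c$ to lie outside $\Sh^1(F,M)$. I therefore do not anticipate any serious obstacle.
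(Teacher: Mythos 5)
Your proof is correct and follows essentially the same strategy as the paper's: establish the monotonicity $\Sh^1_T(F,M)\subseteq\Sh^1_{S_0}(F,M)$ for $S_0\subseteq T$, and combine it with a finiteness result to find a finite $S\supseteq S_0$ such that every element of $\Sh^1_{S_0}(F,M)$ not already in $\Sh^1(F,M)$ is detected at some place of $S$. The paper invokes finiteness of $\Sh^1_{S_0}(F,M)$ (Milne, Theorem 4.10(a)) directly and phrases the enlargement as stabilization of decreasing intersections of kernels, whereas you appeal to finiteness of the larger group $H^1_{S_0}(F,M)$ and pick a detecting place $v_c$ for each of the finitely many offending classes $c$ --- the same idea in slightly different clothing.
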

\begin{proof} Let $R\subset|F|$ be a set of the type considered above. By part $(a)$ of Theorem 4.10 of \cite{Mi} on pages 70-71 the group $\Sh^1_R(F,M)$ is finite. Therefore there is a finite set $S\subset|F|$ containing $R$ such that
$$\bigcap_{x\in|F|}\textrm{Ker}(\iota_x^*|_{\Shi^1_R(F,M)})=
\bigcap_{x\in S}\textrm{Ker}(\iota_x^*|_{\Shi^1_R(F,M)}).$$
Because $\Sh^1_S(F,M)\leq\Sh^1_R(F,M)$ the claim is now clear.
\end{proof}
\begin{proof}[End of the proof of Theorem \ref{5.4}] By the above we only have to show that every $c=\prod_{x\in|F|}c_x\in H^1_{\mathbb A}(F,M)$ annihilated by $\widetilde H^1(F,M^{\vee})$ with respect to $[\cdot,\cdot]$ actually lies in $\widetilde H^1(F,M)$. By definition there is a finite subset $S\subset|F|$ of the type considered above such that $c_x\in H^1_{un}(\Gamma_x,M)$ for every $x\in|F|-S$. By Lemma \ref{5.7} we may also assume that the image of $\Sh^1_S(F,M)$ with respect to $\prod_{x\in|F|}\iota_x^*$ is trivial by enlarging $S$ if it is necessary. Let $R\subset|F|$ be finite subset containing $S$. Note that for every $d\in H^1_R(F,M^{\vee})$ we have:
$$[c,d]=[\prod_{x\in R}c_x,\prod_{x\in R}\iota^*_x(d)]_R$$
since for every $x\in|F|-R$ the cohomology classes $c_x$ and $\iota^*_x(d)$ are $H^1_{un}(\Gamma_x,M)$ and in $H^1_{un}(\Gamma_x,M^{\vee})$, respectively. Hence the element $\prod_{x\in R}c_x\in\prod_{x\in S}H^1(\Gamma_x,M)$ is annihilated by $\widetilde H^1_R(F,M^{\vee})$ with respect to the pairing $[\cdot,\cdot]_R$. Since $\mathcal O_{F,R}\supseteq\mathcal O_{F,S}$ the order of $M$ is invertible in $\mathcal O_{F,R}$ and hence there is a $c_R\in H^1_R(F,M)$ such that
\begin{equation}\label{5.7.1}
\iota^*_x(c_R)=c_x\quad(\forall x\in R)
\end{equation}
by Lemma \ref{5.6}. Now for any pair of finite subsets $R_1,R_2\subset|F|$ containing $S$ we have $c_{R_1}-c_{R_2}\in\Sh^1_S(F,M)$ by (\ref{5.7.1}). Therefore we get that for every $R\subset|F|$ the image of $c_R$ with respect to $\prod_{x\in|F|}\iota^*_x$ is independent of the choice of $R$ and it is actually equal to $c$, again by (\ref{5.7.1}).
\end{proof}

\section{A topological theorem on the cup product}

\begin{defn}\label{6.1} In this chapter all topological spaces are Hausdorff and locally contractible.  For every topological space $T$ and abelian group $A$ let $A_T$ denote the constant sheaf $A$ on $T$. Let $p:X\rightarrow Y$ be a fibre bundle with a connected fibre $F$. Let $r:Y\rightarrow X$ and $s:Y\rightarrow X$ be sections of the fibration $p$. Let $p_!$ denote the derived left adjoint of the pull-back functor $p^*$ from the category of complexes of sheaves on $Y$ to category of complexes of sheaves on $X$. (The adjoint $p_!$ exists because $p^*$ commutes with arbitrary limits, since we assumed $p$ to be a fibre bundle, and so we may apply Freyd's adjoint functor theorem.) By functoriality both $r$ and $s$ induces maps $r^h,s^h\in [\mathbb Z_Y,p_!(\mathbb Z_X)]$ in the derived category of complexes of sheaves on $Y$. Let $\deg\in[p_!(\mathbb Z_X),\mathbb Z_Y]$ denote the map from $p_!(\mathbb Z_X)$ onto its $0$-th homology. (The latter is $\mathbb Z_Y$ because we assumed that the fibres are connected.) Since both $r$ and $s$ are sections, the compositions of $r^h,s^h$ with $\deg$ are both the identity map in $[\mathbb Z_Y,\mathbb Z_Y]$. Let $\tau_{>0}(p_!(\mathbb Z_X))$ be the fibre of the map $\deg$ in the derived category and let
$$\CD\tau_{>0}(p_!(\mathbb Z_X))@>f_0>>p_!(\mathbb Z_X)@>{\deg}>>
\mathbb Z_Y@>>>\tau_{>0}(p_!(\mathbb Z_X))[1]\endCD$$
be the corresponding distinguished triangle. Therefore their difference $r^h-s^h\in [\mathbb Z_Y,p_!(\mathbb Z_X)]$ is the image of a map $[r-s]\in[\mathbb Z_Y,\tau_{>0}(p_!(\mathbb Z_X))]$ such that $r^h-s^h=f_0\circ[r-s]$. This map is unique since $[\mathbb Z_Y,\mathbb Z_Y[-1]]$ is zero. Let $\mathcal B$ be the first homology of the complex $\tau_{>0}(p_!(\mathbb Z_X))$, let $h_1:\tau_{>0}(p_!(\mathbb Z_X))\to\mathcal B[1]$ be the Postnikov truncation, and consider the distinguished triangle:
$$\CD\tau_{>1}(p_!(\mathbb Z_X))@>f_1>>\tau_{>0}(p_!(\mathbb Z_X))
@>{h_1}>>\mathcal B[1]@>>>\tau_{>1}(p_!(\mathbb Z_X))[1].\endCD$$
Let $\Delta(r,s)\in[\mathbb Z_Y,\mathcal B[1]]=H^1(Y,\mathcal B)$ denote $h_1\circ[r-s]$. 
\end{defn}
\begin{defn}\label{6.2} For every $y\in Y$ let $X_y=p^{-1}(y)$ be the fibre of $p$ over $y$ and let $i_y:X_y\rightarrow X$ denote the inclusion map. Let $\mathcal A$ be a locally constant sheaf of abelian groups on the base $Y$ and let $H^2(X,p^*(\mathcal A))_0$ denote the intersection of the kernels of the maps:
$$i_x^*:H^2(X,p^*(\mathcal A))\longrightarrow H^2(X_y,p^*(\mathcal A)|_{X_y})$$
for every $y\in Y$.  Note that
$$H^2(X,p^*(\mathcal A))_0=\bigcap_{y\in S}\textrm{ker}(i_y^*)$$
where $S\subseteq Y$ is any set such that for every connected component $C\subseteq Y$ there is a $y\in C\cap S$. Note that $H^2(X,p^*(\mathcal A))_0$ is the kernel of the edge homomorphism:
$$\epsilon:H^2(X,p^*(\mathcal A))\longrightarrow H^0(Y,R^2p_*(p^*(\mathcal A)))
$$
furnished by the Leray spectral sequence:
$$H^p(Y,R^qp_*(p^*(\mathcal A)))\Rightarrow
H^{p+q}(X,p^*(\mathcal A)).$$
So the higher edge homomorphism of this spectal sequence is a homomorphism:
$$\delta:H^2(X,p^*(\mathcal A))_0\longrightarrow H^1(Y,R^1p_*(p^*(\mathcal A)))
=H^1(Y,\textrm{Hom}(\mathcal B,\mathcal A))$$
where we used that there is a natural isomorphism:
$$R^1p_*(p^*(\mathcal A))=\textrm{Hom}(\mathcal B,\mathcal A).$$
Let
$$\cup:H^1(Y,\mathcal B)\times H^1(Y,\textrm{Hom}(\mathcal B,\mathcal A))\longrightarrow
H^2(Y,\mathcal A)$$
be the cup product furnished by the evaluation map:
$$\mathcal B\otimes\textrm{Hom}(\mathcal B,\mathcal A)\longrightarrow\mathcal A.$$
Note that every section $s$ of $p$ induces a homomorphism:
$$s^*:H^i(X,p^*(\mathcal A))\longrightarrow H^i(Y,s^*(p^*(\mathcal A)))=
H^i(Y,\mathcal A)\quad(\forall i\in\mathbb N).$$
\end{defn}
\begin{thm}\label{6.3} For every $c\in H^2(X,p^*(\mathcal A))_0$ and for every pair of sections $r,s$ of the fibration $p$ we have:
$$r^*(c)-s^*(c)=\Delta(r,s)\cup\delta(c)\in H^2(Y,\mathcal A).$$
\end{thm}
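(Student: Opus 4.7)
My plan is to reinterpret both sides of the equality as morphisms in the derived category $D(Y)$ of sheaves of abelian groups on $Y$ and reduce the claim to the factorization of a single morphism through the Postnikov tower of $p_!(\mathbb Z_X)$. Since $p_!$ is by construction the derived left adjoint of $p^*$, the adjunction gives a canonical identification
$$H^2(X, p^*(\mathcal A)) \;=\; [\mathbb Z_X, p^*(\mathcal A)[2]] \;\cong\; [p_!(\mathbb Z_X), \mathcal A[2]],$$
under which $c$ corresponds to a morphism $\widetilde c : p_!(\mathbb Z_X) \to \mathcal A[2]$ in $D(Y)$. For any section $t$ of $p$, the morphism $t^h : \mathbb Z_Y \to p_!(\mathbb Z_X)$ is the pullback along $t$ of the unit map $\mathbb Z_X \to p^*p_!(\mathbb Z_X)$, and a direct manipulation of the adjunction identifies the pullback $t^*(c) \in H^2(Y, \mathcal A)$ with the composition $\widetilde c \circ t^h$. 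Invoking the relation $r^h - s^h = f_0 \circ [r-s]$ built into Definition \ref{6.1} then yields
$$r^*(c) - s^*(c) \;=\; \widetilde c \circ (r^h - s^h) \;=\; \widetilde c \circ f_0 \circ [r-s].$$

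Next I use the hypothesis $c \in H^2(X, p^*(\mathcal A))_0$ to factor the morphism $\widetilde c \circ f_0 : \tau_{>0}(p_!(\mathbb Z_X)) \to \mathcal A[2]$ through $h_1$. The distinguished triangle
$$\tau_{>1}(p_!(\mathbb Z_X)) \stackrel{f_1}{\longrightarrow} \tau_{>0}(p_!(\mathbb Z_X)) \stackrel{h_1}{\longrightarrow} \mathcal B[1] \longrightarrow \tau_{>1}(p_!(\mathbb Z_X))[1]$$
induces a long exact sequence in which the obstruction to such a lift lives in $[\tau_{>1}(p_!(\mathbb Z_X)), \mathcal A[2]]$. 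Using the stalkwise description of $p_!(\mathbb Z_X)$ in terms of fibrewise homology together with proper base change, one checks that this obstruction is detected precisely by the collection of fibre restrictions $i_y^*(c) \in H^2(X_y, (p^*\mathcal A)|_{X_y})$ for $y \in Y$, each of which vanishes by the very definition of $H^2(X, p^*(\mathcal A))_0$. Hence there exists a lift $\widetilde\delta : \mathcal B[1] \to \mathcal A[2]$ with $\widetilde c \circ f_0 = \widetilde\delta \circ h_1$, and a comparison of the local-to-global $\Ext$ spectral sequence with the Leray spectral sequence identifies the class of $\widetilde\delta$ in $\Ext^1(\mathcal B, \mathcal A) = [\mathcal B[1], \mathcal A[2]]$ with the image of $\delta(c) \in H^1(Y, \Hom(\mathcal B, \mathcal A))$ under the natural edge homomorphism.

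Combining the two steps gives
$$r^*(c) - s^*(c) \;=\; \widetilde\delta \circ h_1 \circ [r-s] \;=\; \widetilde\delta \circ \Delta(r,s),$$
and it remains to identify this Yoneda composition with the cup product $\Delta(r,s) \cup \delta(c)$. This is the standard compatibility between Yoneda composition in the derived category and the cup product on cohomology: both products are induced by the canonical pairing $\mathcal B \otimes^L \mathcal R\Hom(\mathcal B, \mathcal A) \to \mathcal A$ in $D(Y)$, and the identification can be verified either abstractly from this observation or by a direct \v{C}ech cocycle calculation using an injective resolution of $\mathcal A$.

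The principal obstacle is the middle paragraph: pinning down precisely why the vanishing of the fibre restrictions $i_y^*(c)$ forces the obstruction in $[\tau_{>1}(p_!\mathbb Z_X), \mathcal A[2]]$ to vanish. This requires a careful stalkwise analysis of the Postnikov truncations of $p_!(\mathbb Z_X)$, together with proper base change, to match the cohomological conditions on $c$ with the relevant morphism-space in $D(Y)$. A secondary delicate point worth noting is that the lift $\widetilde\delta$ is only canonical modulo the image coming from the first stage of the Postnikov tower, i.e.\ from $[\mathbb Z_Y, \mathcal A[2]] = H^2(Y, \mathcal A)$; this ambiguity matches exactly the ambiguity in the definition of $\delta(c)$ modulo $H^2(Y, \mathcal A)$ arising from the Leray filtration, so it does not affect the validity of the final identity.
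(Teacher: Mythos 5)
Your proposal follows essentially the same route as the paper's proof: translate both sides into morphisms in the derived category via the adjunction $p_!\dashv p^*$, write $r^*(c)-s^*(c)=\widetilde c\circ f_0\circ[r-s]$, factor $\widetilde c\circ f_0$ through the Postnikov truncation $h_1$, and identify the resulting Yoneda composition with the cup product. Two corrections to the points you flag as obstacles are in order, though. First, the factorization through $h_1$ does not require a stalkwise or proper-base-change argument as you suggest: once one observes (as the paper does) that under the identification $[\tau_{>1}(p_!\mathbb Z_X),\mathcal A[2]]\cong H^0(Y,R^2p_*(p^*\mathcal A))$ the composition map $(f_1\circ f_0)\circ\colon H^2(X,p^*\mathcal A)\to[\tau_{>1}(p_!\mathbb Z_X),\mathcal A[2]]$ is precisely the Leray edge homomorphism $\epsilon$ of Definition \ref{6.2}, the vanishing of the obstruction for $c\in H^2(X,p^*\mathcal A)_0$ becomes tautological, since $H^2(X,p^*\mathcal A)_0=\textrm{Ker}(\epsilon)$ by definition. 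Second, the ``secondary delicate point'' you raise about $\widetilde\delta$ being canonical only modulo $H^2(Y,\mathcal A)$ is not actually present: since $[\tau_{>1}(p_!\mathbb Z_X)[1],\mathcal A[2]]=[\tau_{>1}(p_!\mathbb Z_X),\mathcal A[1]]=0$ (the source is concentrated in homological degrees $\geq 2$ while $\mathcal A[1]$ sits in degree $1$), the lift $\widetilde\delta$ is unique. This uniqueness is exactly what permits its identification with the higher edge homomorphism $\delta$ of Definition \ref{6.2}, which is itself a well-defined homomorphism to $H^1(Y,\Hom(\mathcal B,\mathcal A))$ carrying no such ambiguity.
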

\begin{proof} For every topological space $T$ and complexes of sheaves $\mathcal C,\mathcal D$ and $\mathcal E$ on $T$ let
$$m(\mathcal C,\mathcal D,\mathcal E):[\mathcal C,\mathcal D]\times [\mathcal D,\mathcal E]\longrightarrow
[\mathcal C,\mathcal E]$$
denote the map given by the rule $(f,g)\mapsto f\circ g$. Moreover for every $f\in [\mathcal C,\mathcal D]$ let 
$$f\circ:[\mathcal D,\mathcal E]\longrightarrow
[\mathcal C,\mathcal E]$$
denote the map given by the rule $g\mapsto f\circ g$, and similarly for every 
$g\in [\mathcal D,\mathcal E]$ let 
$$\circ g:[\mathcal C,\mathcal D]\longrightarrow
[\mathcal C,\mathcal E]$$
denote the map given by the rule $f\mapsto f\circ g$. Then we have the following commutative diagram:
$$\!\!\xymatrix{
[\mathbb Z_Y,\mathcal B[1]] \times [\mathcal B[1],\mathcal A[2]]
\ar@<7ex>_{h_1\circ}[d]
\ar@<1ex>^{\ \ \ m(\mathbb Z_Y,\mathcal B[1],\mathcal A[2])}[rrrrrd] & & & & & \\
[\mathbb Z_Y,\tau_{>0}(p_!(\mathbb Z_X))]
\times [\tau_{>0}(p_!(\mathbb Z_X)),\mathcal A[2]]
\ar@<7ex>^{\circ h_1}[u]
\ar@<-7ex>_{\circ f_0}[d] 
\ar^{\ \ \ \ \ \ m(\mathbb Z_Y,\tau_{>0}(p_!(\mathbb Z_X)),\mathcal A[2])}[rrrrr]& & & & & [\mathbb Z_Y,\mathcal A[2]].
 \\
[\mathbb Z_Y,p_!(\mathbb Z_X)] \times [p_!(\mathbb Z_X),\mathcal A[2]] \ar@<-7ex>^{f_0\circ}[u] \ar@<-1ex>_{\ \ \ m(\mathbb Z_Y,p_!(\mathbb Z_X),\mathcal A[2])}[rrrrru]& & & & & }$$
For every section $t:Y\to X$ of $p$ let $t^h\in[\mathbb Z_Y,p_!(\mathbb Z_X)]$ be the map induced by $t$, similarly to the notation we introduced in Definition \ref{6.1}. Note that $[p_!(\mathbb Z_X),\mathcal A[2]] =H^2(X,\mathcal A)$ and under this identification for every section $t:Y\to X$ as above and $c\in H^2(X,\mathcal A)$ we have $t^h\circ c=t^*(c)$. Therefore
$$r^*(c)-s^*(c)=r^h\circ c-s^h\circ c=(r^h-s^h)\circ c=[r-s]\circ(f_0\circ c)$$
by the commutativity of the diagram above. Note that
$$[\mathcal B[1],\mathcal A[2]]=[\mathcal B,\mathcal A[1]]=H^1(Y,\textrm{Hom}(\mathcal B,\mathcal A)),$$ 
and under this identification $\delta(c)\in [\mathcal B[1],\mathcal A[2]]$ is such that $h_1\circ\delta(c)=f_0\circ c$. Indeed
$$[\tau_{>1}(p_!(\mathbb Z_X)),\mathcal A[2])]=
H^0(Y,R^2p_*(p^*(\mathcal A))$$
and under this identification
$$(f_1\circ f_0)\circ:[p_!(\mathbb Z_X),\mathcal A[2])]\longrightarrow
[\tau_{>1}(p_!(\mathbb Z_X)),\mathcal A[2])]$$
is the edge homomorphism $\epsilon$ in Definition \ref{6.2}. In particular the kernel of $(f_1\circ f_0)\circ$ is $H^2(X,\mathcal A)_0$. The second distinguished triangle in Definition \ref{6.1} induces a long exact sequence:
$$\CD [\tau_{>1}(p_!(\mathbb Z_X))[1],\mathcal A[2])]@>>>
[\mathcal B[1],\mathcal A[2]]@>{h_1\circ}>>
[\tau_{>0}(p_!(\mathbb Z_X)),\mathcal A[2])]\endCD$$
$$\CD@>{f_1\circ}>>
[\tau_{>1}(p_!(\mathbb Z_X)),\mathcal A[2])].\endCD$$
Since
$$[\tau_{>1}(p_!(\mathbb Z_X))[1],\mathcal A[2])]=
[\tau_{>1}(p_!(\mathbb Z_X)),\mathcal A[1])]=0,$$
there is a unique homomorphism:
$$\partial:H^2(X,\mathcal A)_0\longrightarrow[\mathcal B[1],\mathcal A[2]]=H^1(Y,\textrm{Hom}(\mathcal B,\mathcal A))$$
such that the diagram
$$\xymatrix{H^2(X,\mathcal A)_0
\ar[d]^{\partial}\ar[dr]^{f_0\circ|_{H^2(X,\mathcal A)_0}} &  \\
[\mathcal B[1],\mathcal A[2]]\ar[r]^{\!\!\!\!\!\!\!\!\!\!\!\!\!\!\!\!h_1\circ} &
[\tau_{>0}(p_!(\mathbb Z_X)),\mathcal A[2])]}$$
is commutative. The map $\partial$ is actually the higher edge homomorphism
$\delta$ in Definition \ref{6.2}. Now the relation $h_1\circ\delta(c)=f_0\circ c$ is clear. Now $[\mathbb Z_Y,\mathcal B[1]]=H^1(Y,\mathcal B)$ and $[\mathbb Z_Y,\mathcal B[2]]=H^2(Y,\mathcal B)$, and under these identifications $m(\mathbb Z_Y,\mathcal B[1],\mathcal A[2])$ is the cup product:
$$\cup:H^1(Y,\mathcal B)\times H^1(Y,\textrm{Hom}(\mathcal B,\mathcal A))\longrightarrow
H^2(Y,\mathcal A)$$
in Definition \ref{6.2} above. So by using the commutativity of the diagram above again we get that
$$[r-s]\circ(f_0\circ c)=\Delta(r,s)\circ\delta(c)=
\Delta(r,s)\cup\delta(c),$$
and the theorem follows.
\end{proof}
\begin{defn}\label{6.4} Let $M$ a finite abelian group and let
\begin{equation}\label{6.6.1}
\CD1@>>>M@>>>\Omega
@>>>\Pi@>>>1\endCD
\end{equation}
be an exact sequence in the category of prodiscrete groups. Let $A$ be a discrete $\Pi$-module and let $H^2(\Omega,A)_0$ denote the kernel of the restriction map:
$$H^2(\Omega,A)\longrightarrow H^2(M,A).$$
Moreover let
$$\delta:H^2(\Omega,A)_0\longrightarrow
H^1(\Pi,\textrm{Hom}(M,A))=
H^1(\Pi,H^1(M,A))$$
be the homomorphism furnished by the Hochschild-Serre spectral sequence:
$$H^p(\Pi,H^q(M,A))\Rightarrow
H^{p+q}(\Omega,A)$$
where we equip $M$ with its $\Pi$-module structure induced by the exact sequence (\ref{6.6.1}). For every pair of sections $s_1,s_2:\Pi\rightarrow\Omega$ of the exact sequence (\ref{6.6.1}) the $1$-cochain in $C^1(\Pi,M)$ given by the rule $g\mapsto s_1(g)s_2(g)^{-1}$ is actually is a cocycle. Let $[s_1-s_2]\in H^1(\Pi,M)$ be cohomology class represented by this cocycle. Finally let
$$\cup:H^1(\Pi,M)\times H^1(\Pi,\textrm{\rm Hom}(M,A))\longrightarrow
 H^2(\Pi,A)$$
 be the cup product induced by the evaluation map $M\otimes \textrm{Hom}(M,A)\rightarrow A$.
\end{defn}
\begin{cor}\label{6.5} For every $c\in H^2(\Omega,A)_0$ and for every pair of sections $s_1,s_2$ of the exact sequence {\rm (6.6.1)} we have:
$$s_1^*(c)-s_2^*(c)=[s_1-s_2]\cup\delta(c)\in H^2(\Pi,A).$$
\end{cor}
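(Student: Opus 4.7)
The plan is to deduce Corollary \ref{6.5} directly from Theorem \ref{6.3} via the classifying space dictionary between continuous group cohomology and sheaf cohomology. First, I would apply a classifying space construction (for instance the bar construction) to the exact sequence (\ref{6.6.1}) to produce a fibration $p\colon B\Omega\to B\Pi$ with fiber $BM=K(M,1)$. Since $M$ is discrete, $BM$ is connected, so the connectedness hypothesis of Theorem \ref{6.3} is satisfied; the $\Pi$-module $A$ defines a locally constant sheaf $\mathcal A$ on $B\Pi$. Since all groups in sight are prodiscrete and the coefficients are a discrete finite-torsion module (after reducing to finitely generated pieces of $A$ by continuity), one has the standard identification $H^i(\Omega,A)=H^i(B\Omega,p^*\mathcal A)$ and analogously for $\Pi$ and $M$.

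The next step is to match the building blocks in Definition \ref{6.1}--\ref{6.2} with those of Definition \ref{6.4}. The local system $\mathcal B$, being the fiberwise $H_1$, is the sheaf on $B\Pi$ associated to $H_1(BM,\mathbb Z)=M_{ab}=M$ with its natural $\Pi$-action induced by (\ref{6.6.1}); consequently $R^1p_*(p^*\mathcal A)=\mathrm{Hom}(M,A)$ as local systems, and the higher edge map $\delta$ of the Leray spectral sequence in Definition \ref{6.2} is the same as the higher edge map of the Hochschild--Serre spectral sequence in Definition \ref{6.4}. The subspace $H^2(\Omega,A)_0$ is the kernel of restriction to the fiber, which matches $H^2(B\Omega,p^*\mathcal A)_0$. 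Moreover, a continuous set-theoretic section $s_i\colon\Pi\to\Omega$ of (\ref{6.6.1}) yields a section $r_i$ of $p$ at the level of classifying spaces.

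The main technical point is the identification of the class $[s_1-s_2]\in H^1(\Pi,M)$ of Definition \ref{6.4} with the class $\Delta(r_1,r_2)\in H^1(B\Pi,\mathcal B)$ of Definition \ref{6.1}. I would do this by unwinding both constructions at the chain level: the cocycle $g\mapsto s_1(g)s_2(g)^{-1}$ exactly computes the obstruction to lifting the difference $r_1^h-r_2^h$ across the degree map $\deg\colon p_!(\mathbb Z_{B\Omega})\to\mathbb Z_{B\Pi}$ and then projecting via the Postnikov truncation $h_1$. Equivalently, both classes are characterized as the unique element of $H^1(\Pi,M)$ whose image under the connecting map for the extension $1\to M\to\Omega\to\Pi\to 1$ equals the difference of the two sections, so they must coincide.

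Once these identifications are in place, Corollary \ref{6.5} follows by applying Theorem \ref{6.3} verbatim and transporting the equality $r_1^*(c)-r_2^*(c)=\Delta(r_1,r_2)\cup\delta(c)$ along the dictionary, since the evaluation pairing $M\otimes\mathrm{Hom}(M,A)\to A$ used to form the cup product in Definition \ref{6.4} agrees with the evaluation pairing of Definition \ref{6.2} on the corresponding local systems. The hard part will be the bookkeeping to ensure the classifying space model is legitimate in the prodiscrete setting and that the matching of $[s_1-s_2]$ with $\Delta(r_1,r_2)$ is carried out with the correct sign conventions; the rest is a formal translation.
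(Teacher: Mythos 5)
Your proposal is correct and takes essentially the same route as the paper: both pass to the Serre fibration of classifying spaces $p\colon B\Omega\to B\Pi$, identify $[s_1-s_2]$ with $\Delta(r_1,r_2)$ and the Hochschild--Serre edge map with the Leray edge map, and then invoke Theorem \ref{6.3}. The only small difference is in how the reduction to an honest (non-prodiscrete) setting is phrased — the paper reduces to $\Pi$ finite by a limit argument, whereas you reduce the coefficients — but this is the same kind of bookkeeping and does not change the argument.
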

\begin{proof} We may assume that $\Pi$ is actually finite by applying the usual limit argument. The proof will be based on giving topological interpretation to both sides of the equation. The homomorphism $\Omega\to\Pi$ furnishes a Serre-fibration of classifying spaces $p:B\Omega\to B\Pi$ with fibre $BM$. The sections $s_1,s_2$ induce sections of the fibration $p$ which we will denote by the same symbols by abuse of notation. Let $\mathcal B$ denote the locally constant sheaf on $B\Pi$ corresponding to the $\Pi$-module $A$. Then $H^1(\Omega,M)=H^1(B\Omega,\mathcal B)$ and the cohomolology classes denoted by $[s_1-s_2]$ in Definitions \ref{6.1} and \ref{6.4} correspond to each other. There is a locally constant sheaf $\mathcal A$ on $B\Pi$ corresponding to the $\Pi$-module $A$. Note that $H^2(\Omega,A)=H^2(B\Omega,\mathcal A)$, and also $H^2(\Omega,A)_0=H^2(B\Omega,\mathcal A)_0$, where we use the notation of Definition \ref{6.2} for the fibration $p$. Moreover $H^1(\Omega,A)=H^2(B\Omega,\mathcal A)$, and the edge homomorphisms 
$$H^2(\Omega,A)_0\longrightarrow H^1(\Pi,\textrm{Hom}(M,A))
\textrm{ and } 
H^2(B\Omega,\mathcal A)_0\longrightarrow H^1(B\Pi,\textrm{Hom}(\mathcal B,\mathcal A))$$
correspond to each other under these identifications. Consequently the cohomology classes denoted by $\delta(c)$ in Definitions \ref{6.2} and \ref{6.4} also correspond to each other.  The claim now follows immediately from Theorem \ref{6.3}.
\end{proof}

\section{The Tate duality pairing and the Brauer-Manin pairing}

\begin{defn} By a continuous (or discrete) module over a pro-finite group
$\Delta$ we mean a $\Delta$-module $M$ such that the action of $\Delta$ is continuous with respect to the discrete topology on $M$. For every pro-finite group $\Delta$ let $\mathcal M(\Delta),\mathcal C(\Delta),\mathcal C^{+}(\Delta),\mathcal C^{-}(\Delta)$, and $\mathcal C^{\pm}(\Delta)$ denote the category of continuous $\Delta$-modules, the category of complexes of continuous $\Delta$-modules, the category of complexes in $\mathcal C(\Delta)$ bounded from above, the category of complexes in $\mathcal C(\Delta)$ bounded from below, and the category of complexes in $\mathcal C(\Delta)$ which are either bounded from above or below, respectively. For every object $C$ of $\mathcal C(\Delta)$ let $H_n(C)$ denote the $n$-th homology group of $C$. When $\Delta$ is the absolute Galois group $\Gamma=\textrm{Gal}(\overline F|F)$ of a field $F$, for every complex $C$
$$\CD\cdots@<<<C_{-1}@<<<C_0@<<<C_1@<<<\cdots\endCD$$
in $\mathcal C(\Delta)$ let $C^{\vee}$ denote the dual complex:
$$\xymatrix{\cdots&\textrm{Hom}(C_{1},\overline F^*)
\ar[l]&\textrm{Hom}(C_0,\overline F^*)\ar[l]&\textrm{Hom}(C_{-1},\overline F^*)\ar[l]&\cdots\ar[l]}$$
where Hom denotes the group of continuous group homomorphisms (and we equip $\overline F^*$ with the discrete topology). 
\end{defn}
\begin{defn} Note that for every pro-finite group $\Delta$ the category $\mathcal M(\Delta)$ has enough injectives, so right exact functors from $\mathcal M(\Delta)$ has derived functors. For every complex $C$ in $\mathcal C^{\pm}(\Delta)$ let $\HH^i(\Delta,C)$ denote its hypercohomology with respect to the functor of $\Delta$-invariants. Similarly for any object $C$ in
$\mathcal C^{\pm}(\Delta)$ let $\textrm{Ext}_{\Delta}^n(C,\cdot)$ denote the $n$-th derived functor of $\textrm{Hom}_{\mathcal C(\Delta)}(C,\cdot)$. When $\Delta=\Gamma=\textrm{Gal}(\overline F|F)$, as above, we will use the notation $\HH^i(F,C)$ for $\HH^i(\Gamma,C)$. When $F$ is a global field let
$$\HH^i_{\Pi}(F,C)=\prod_{x\in|F|}\HH^i(F_x,C)$$
where for every $x\in|F|$ we consider $C$ as an object of $\mathcal C(\Gamma_x)$ via the embedding $\iota_x:\Gamma_x\to\Gamma$, and we interpret $\HH^i(F_x,C)$ accordingly. For every such $x$ there is a pull-back map $i_x^*:\HH^i(F,C)\to\HH^i(F_x,C)$. Let 
$$\Sh^i(F,C)=\text{Ker}\big(\prod_{x\in|F|}\iota_x^*:
\HH^i(F, C)\to\HH^i_{\Pi}(F,C)\big).$$
\end{defn}
\begin{thm}\label{l:Sha} Let $F$ be a global field and let $C$ be a complex in $\mathcal C^{\pm}(\Gamma)$ such that $H_n(C)$ is finite for every $n$ and not divisible by the characteristic of $F$. Then there is a perfect pairing
$$\langle\cdot,\cdot\rangle:\Sh^i(F,C)\times\Sh^{3-i}(F,C^{\vee})\longrightarrow\mathbb Q/\mathbb Z.$$
\end{thm}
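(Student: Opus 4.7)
The plan is d\'evissage on the amplitude of $C$, reducing to classical Poitou--Tate duality for single finite Galois modules. First I would construct the pairing. The evaluation map $\mathrm{ev}\colon C \otimes^{L} C^{\vee} \to \overline{F}^{*}[0]$ induces cup-product maps $\HH^{i}(F,C) \otimes \HH^{3-i}(F,C^{\vee}) \to \HH^{3}(F,\overline{F}^{*})$. Although $H^{3}(\Gamma,\overline{F}^{*})$ and each $H^{3}(\Gamma_{x},\overline{F}_{x}^{*})$ vanish globally and locally, one extracts on the Sha subgroups a pairing with values in $\mathbb{Q}/\mathbb{Z}$ by the mechanism familiar from the classical Tate--Poitou construction: for $\alpha \in \Sh^{i}(F,C)$ and $\beta \in \Sh^{3-i}(F,C^{\vee})$, use the local triviality of $\alpha|_{F_{x}}$ to lift the cup product at each place to a class in $H^{2}(F_{x},\overline{F}_{x}^{*})$ and sum the local invariants; the sum is finite by a support argument parallel to Lemma \ref{3.3}, and independent of the lifting choices by reciprocity for the global Brauer group.

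Next I would reduce to bounded complexes. Because $H_{n}(C)$ is finite of order prime to $\textrm{char}(F)$ in every degree, the hypercohomology spectral sequence together with the (local and global) vanishing of $H^{p}(\Gamma,M)$ for $p$ sufficiently large on a finite $M$ of order invertible in $F$ show that $\Sh^{i}(F,C)$ and $\Sh^{3-i}(F,C^{\vee})$ depend only on a finite window of the homology of $C$. Truncating outside this window, we may assume $C$ bounded. Now induct on the number $k$ of degrees $n$ with $H_{n}(C) \neq 0$. In the base case $k=1$ the complex $C$ is quasi-isomorphic to a shift $M[n]$ of a single finite $\Gamma$-module $M$, so $C^{\vee} \simeq M^{\vee}[-n]$, and after the shift cancels the claim reduces to perfectness of $\Sh^{j}(F,M) \times \Sh^{3-j}(F,M^{\vee}) \to \mathbb{Q}/\mathbb{Z}$ --- the classical Poitou--Tate theorem, whose only nontrivial cases are $(j,3-j) \in \{(1,2),(2,1)\}$, since $\Sh^{0} = 0$ from $M^{\Gamma} \hookrightarrow M^{\Gamma_{x}}$ and $\Sh^{3} = 0$ is itself part of Poitou--Tate.

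For the inductive step, letting $n_{0}$ be the smallest degree with $H_{n_{0}}(C) \neq 0$, I would use the truncation distinguished triangle $H_{n_{0}}(C)[n_{0}] \to C \to C' \to H_{n_{0}}(C)[n_{0}+1]$, where $C'$ has amplitude $k-1$, together with its dual triangle for $C^{\vee}$. These triangles produce long exact sequences in $\HH^{*}(F,-)$ and $\HH^{*}_{\Pi}(F,-)$ compatible via the restriction maps $\iota_{x}^{*}$; extracting kernels yields an exact sequence involving the $\Sh^{*}$ terms. The cup-product pairing provides a ladder between this sequence and the Pontryagin dual of the corresponding sequence for $C^{\vee}$, with perfectness on the outer rungs by the induction hypothesis, so the five-lemma in finite abelian groups (where Pontryagin duality is exact) concludes. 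The hard part will be the careful homological algebra of this last step: $\Sh^{i}$ is not the cohomology of a functorial complex, so one does not automatically get a full long exact sequence of Sha groups, only a four- or six-term fragment. One must therefore set up a sufficiently symmetric exact sequence (in the style of the nine-term Poitou--Tate sequence) that is self-dual under Pontryagin duality, and verify that the cup-product pairing is compatible with the connecting homomorphisms of both triangles; only then does the five-lemma close.
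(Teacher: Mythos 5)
Your proposal and the paper take genuinely different routes: the paper's entire proof is a citation to Theorem 3.5.9 of Jossen's thesis (with a remark that the function field case is analogous), whereas you sketch a from-scratch d\'evissage to the classical Poitou--Tate theorem. The overall shape is the right one for such results, your reduction to bounded complexes is fine (the hypercohomology spectral sequence together with cohomological dimension bounds does localize $\Sh^i$ to a finite window), and the base case does collapse to classical Poitou--Tate once one checks the shift conventions: with the paper's homological indexing $\HH^i(F,M[n])=H^{i+n}(F,M)$ and $(M[n])^\vee = M^\vee[-n]$, so the two shifts cancel.

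The genuine gap is in the inductive step, and it is exactly the one you flag yourself. Taking kernels of the vertical localization maps in the ladder of long exact sequences associated to the two distinguished triangles does \emph{not} produce a long exact (or even a short five-term exact) sequence of $\Sh^i$ groups: $\Sh^i$ is defined as a kernel, and kernels of a morphism of long exact sequences form only a complex, not an exact sequence. There is no connecting map $\Sh^i(F,C')\to\Sh^{i+1}(F,H_{n_0}(C)[n_0])$ in general, because the connecting map on $\HH^*$ need not send locally trivial classes to locally trivial classes. The standard remedy --- which is what Jossen actually carries out --- is to embed the $\Sh$ groups in a larger self-dual exact sequence of Poitou--Tate type built from $\HH^i(F,-)$, the \emph{restricted} product of local hypercohomology groups (not the full product $\HH^i_\Pi$ the paper uses merely to define $\Sh$), and a compactly-supported or ``completed'' hypercohomology, prove perfectness of the duality on that whole sequence, and only then extract the $\Sh^i\times\Sh^{3-i}$ duality as a corollary. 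For complexes this requires real bookkeeping: one must define the restricted product with respect to unramified classes for hypercohomology, verify that the cup-product pairing is compatible with all the connecting maps including those coming from the truncation triangle, and check that the finiteness hypothesis on $H_n(C)$ propagates to the finiteness statements needed for Pontryagin duality to be exact. You observe that ``one must therefore set up a sufficiently symmetric exact sequence \dots and verify that the cup-product pairing is compatible with the connecting homomorphisms,'' which is an accurate description of the missing work, but that description \emph{is} the theorem rather than a step toward it. As written, the five-lemma cannot close because the sequences it would be applied to have not been shown to exist.
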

\begin{proof}
This is exactly Theorem 3.5.9 from ~\cite{Jos09}, when $F$ is a number field. The function field case can be proved exactly the same way. It is also important to note that this pairing specializes to the usual Poitou--Tate pairing for $\Gamma$-modules, i.e.~for complexes concentrated in degree zero.
\end{proof}
\begin{defn} Let $\mathbf E$ be an embedding problem over an arbitrary field $F$ given by the diagram (\ref{1.0.1}). Let $\mathbf E_*$ denote the contractible simplicial set freely generated by $G_1$. At the level of sets $\mathbf E_i=G_1^{i+1}$. The diagonal right-action of $G_1$ on each $\mathbf E_i$ induces a free right action of $G_1$ on $\mathbf E_*$, and therefore a free right action of Ker$(\mathbf E)$ on $\mathbf E_*$, too. Then we have a left action of $G_2$ on $\mathbf E_*/\textrm{Ker}(\mathbf E)$ and thus by pulling back with respect to $\phi$ a left action of $\Gamma$ on $\mathbf E_*/\textrm{Ker}(\mathbf E)$. Let $B(\mathbf E)_*$ denote this simplicial object in the category of $\Gamma$-sets. Let $\mathbb Z B(\mathbf E)_*$ denote the complex where $\mathbb Z B(\mathbf E)_n$ is the free abelian group generated by $B(\mathbf E)_n$ and the differential is the usual alternating sum. Equipped with the induced $\Gamma$-action this complex is an object of $\mathcal C^{+}(\Gamma)$. 
\end{defn}
\begin{defn} As a simplicial set $B(\mathbf E)_*$ is weakly equivalent to the Eilenberg--MacLane space $B\textrm{Ker}(\mathbf E)$, and hence
$$H_n(\mathbb Z B(\mathbf{E})_*)\cong
H_n(\textrm{Ker}(\mathbf E),\mathbb Z).$$
Let $\deg\in[\mathbb Z B(\mathbf{E})_*,\mathbb Z]$ denote the map from
$\mathbb Z B(\mathbf{E})_*$ onto its $0$-th homology, let
$\tau_{>0}(\mathbb Z B(\mathbf{E})_*)$ be the fibre of the map $\deg$ in the derived category, and let
$$\CD\tau_{>0}(\mathbb Z B(\mathbf{E})_*)@>f_0>>\mathbb Z B(\mathbf{E})_*@>{\deg}>>
\mathbb Z@>>>\tau_{>0}(\mathbb Z B(\mathbf{E})_*)[1]\endCD$$
be the corresponding distinguished triangle. By construction
$$H_n(\tau_{>0}(\mathbb Z B(\mathbf{E})_*))\cong 
\left\{\begin{array}{ll}
H_n(\textrm{Ker}(\mathbf E),\mathbb Z),&\text{if $n\neq0$,}
\\
0,&\text{if $n=0$.}\end{array}\right.$$
In particular when $\textrm{Ker}(\mathbf E)$ is abelian we have $H_1(\tau_{>0}(\mathbb Z B(\mathbf{E})_*))\cong H_1(\textrm{Ker}(\mathbf E),\mathbb Z)\cong\textrm{Ker}(\mathbf E)$. Let $h_1:\tau_{>0}(\mathbb Z B(\mathbf{E})_*)\to\textrm{Ker}(\mathbf E)[1]$ be the Postnikov truncation in this case.
 \end{defn}
\begin{defn}  Let
$$\pi^*_{\mathbf{E}}:\mathcal C(\Gamma)\to
\mathcal C(\Gamma(\mathbf E))$$
denote the functor which we get by pulling back with respect to the surjective homomorphism $\pi_{\mathbf E}:\Gamma(\mathbf E)\to\Gamma$. For any object $M$ of $\mathcal M(\Gamma(\mathbf E))$ let $\pi_{\mathbf{E}!}(M)$ denote the $\textrm{Ker}(\mathbf E)$-coinvariants of $M$, that is, the quotient of $M$ by the subgroup generated by the set:
$$\{x-\gamma(x)|x\in M,\gamma\in\textrm{Ker}(\mathbf E)\}.$$
Since the latter is a $\Gamma(\mathbf E)$-submodule, there is a natural action of $\Gamma$ on
$\pi_{\mathbf{E}!}(M)$, and hence we get a functor
$\pi_{\mathbf E}:\mathcal M(\Gamma(\mathbf E))\to
\mathcal M(\Gamma)$ which in turn induces a functor:
$$\pi_{\mathbf{E}!}:\mathcal C(\Gamma(\mathbf E))\longrightarrow
\mathcal C(\Gamma).$$
It can be easily seen that this functor is the left adjoint of
$\pi^*_{\mathbf E}$.
\end{defn}
\begin{defn} Let $\mathbb Z\mathbf E_*$ denote the chain complex of the contractible simplicial set $\mathbf E_*$. For every object $C$ of $\mathcal C(\Gamma(\mathbf E))$ we may take (the total complex of) the tensor product $C\otimes\mathbb Z\mathbf E_*$ in the category of complexes of $\mathbb Z$-modules and equip it with the diagonal $\Gamma(\mathbf E)$-action; this makes
$C\otimes\mathbb Z\mathbf E_*$ an object of $\mathcal C(\Gamma(\mathbf E))$. Let $\mathbb L\pi_{\mathbf{E}!}(C)$ denote $\pi_{\mathbf{E}!}(C\otimes\mathbb Z\mathbf E_*)$. As we will shortly see, the functor $\mathbb L\pi_{\mathbf{E}!}$ is the left derived functor of $\pi_{\mathbf{E}!}$ in a suitable interpretation, although the latter is not defined in the sense of classical homological algebra, as $\mathcal M(\Gamma(\mathbf E))$ does not have enough projectives. 
\end{defn}
\begin{lemma}\label{7.8a} There is an isomorphism:
$$\mathbb L\pi_{\mathbf{E}!}(\mathbb Z)\cong\mathbb Z B(\mathbf{E})_*.$$
\end{lemma}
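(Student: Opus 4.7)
The plan is to prove the isomorphism essentially by unfolding the definitions; the identification will be entirely canonical once we exploit the freeness of the right $\textrm{Ker}(\mathbf E)$-action on $\mathbf E_*$. By construction,
$$\mathbb L\pi_{\mathbf{E}!}(\mathbb Z)=\pi_{\mathbf{E}!}(\mathbb Z\otimes\mathbb Z\mathbf E_*),$$
where $\mathbb Z$ carries the trivial $\Gamma(\mathbf E)$-action. The diagonal action on $\mathbb Z\otimes\mathbb Z\mathbf E_*$ therefore reduces to the original $\Gamma(\mathbf E)$-action on $\mathbb Z\mathbf E_*$, yielding a canonical identification $\mathbb Z\otimes\mathbb Z\mathbf E_*\cong\mathbb Z\mathbf E_*$ in $\mathcal C(\Gamma(\mathbf E))$. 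Hence the lemma reduces to constructing a natural isomorphism $\pi_{\mathbf{E}!}(\mathbb Z\mathbf E_*)\cong\mathbb Z B(\mathbf E)_*$ in $\mathcal C(\Gamma)$.

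The second step is the core observation: for any free $\textrm{Ker}(\mathbf E)$-set $T$, the $\textrm{Ker}(\mathbf E)$-coinvariants of the permutation module $\mathbb Z T$ are canonically $\mathbb Z[T/\textrm{Ker}(\mathbf E)]$, because the quotient by the subgroup generated by $\{t-\gamma\cdot t\}$ just identifies elements of the same orbit and kills nothing else on a free orbit. Applying this term-wise to $T=\mathbf E_n=G_1^{n+1}$, which carries a free diagonal $\textrm{Ker}(\mathbf E)$-action, yields isomorphisms $\pi_{\mathbf{E}!}(\mathbb Z\mathbf E_n)\cong\mathbb Z[\mathbf E_n/\textrm{Ker}(\mathbf E)]=\mathbb Z B(\mathbf E)_n$. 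Since the face and degeneracy maps of $\mathbf E_*$ are $\textrm{Ker}(\mathbf E)$-equivariant by construction, they descend to those of $B(\mathbf E)_*$, and these term-wise isomorphisms commute with the differentials, assembling into an isomorphism of chain complexes.

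It remains to check that the residual $\Gamma$-action on $\pi_{\mathbf{E}!}(\mathbb Z\mathbf E_*)$ induced by the quotient $\Gamma(\mathbf E)/\textrm{Ker}(\mathbf E)\cong\Gamma$ coincides with the $\Gamma$-action on $\mathbb Z B(\mathbf E)_*$ defined via $\psi\colon\Gamma\to G_2$. Given $b\in\Gamma$, surjectivity of $\phi$ lets us lift $\psi(b)$ to some $a\in G_1$ so that $(a,b)\in\Gamma(\mathbf E)$. On the quotient $b\cdot[x]=[a\cdot x]$, which is independent of the choice of lift because two lifts differ by an element of $\textrm{Ker}(\mathbf E)$; this is exactly the action of $\psi(b)\in G_2$ on $B(\mathbf E)_n=\mathbf E_n/\textrm{Ker}(\mathbf E)$ pulled back along $\psi$. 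The only mild obstacle I foresee is reconciling the left/right conventions (the $G_1$-action is written as a right action, while $\pi_{\mathbf{E}!}$ is phrased via a left action, so one passes back and forth via $\gamma\mapsto\gamma^{-1}$), but this is purely bookkeeping and does not affect the structure of the argument.
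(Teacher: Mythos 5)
Your proof is correct and follows the same route as the paper, which simply asserts the two key facts (``Clearly $\mathbb Z\otimes\mathbb Z\mathbf E_*\cong\mathbb Z\mathbf E_*$ and $\pi_{\mathbf{E}!}(\mathbb Z\mathbf{E}_*)\cong\mathbb Z B(\mathbf{E})_*$'') without elaboration; you unpack both, identify coinvariants of a free permutation module with the quotient by the $\textrm{Ker}(\mathbf E)$-action, and correctly flag that the residual $\Gamma$-action must be checked to agree with the one induced by $\psi$. The left/right bookkeeping you mention at the end is indeed the only real subtlety, and your handling of it is sound.
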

\begin{proof} Clearly $\mathbb Z\otimes\mathbb Z\mathbf E_*
\cong\mathbb Z\mathbf E_*$ and $\pi_{\mathbf{E}!}(\mathbb Z\mathbf{E}_*)\cong\mathbb Z B(\mathbf{E})_*$.
\end{proof}
\begin{defn} Let $\Delta$ be any pro-finite group, as above, and for any pair $M,N$ of continuous $\Delta$-modules let
$\Hom_{\Delta}(M,N)$ denote the group of $\Delta$-module homomorphisms from $M$ to $N$. Now let $A=\{A_n\}_{n\in\mathbb Z},B=\{B_n\}_{n\in\mathbb Z}$ be two complexes in $\mathcal C(\Delta)$. Let $\underline{\Hom}_{\Delta}(A,B)=
\{\underline{\Hom}^n_{\Delta}(A,B)\}_{n\in\mathbb Z}$ be the equivariant mapping complex from $A$ to $B$, where
$$\underline{\Hom}^n_{\Delta}(A,B)=\prod_{i\in\mathbb Z}
\Hom_{\Delta}(A_i,B_{i-n}),$$
and the differential
$$d:\underline{\Hom}^n_{\Delta}(A,B)\lrar\underline{\Hom}^{n+1}_{\Delta}(A,B)$$
for any $f=\prod_{i\in\mathbb Z}f_i\in\prod_{i\in\mathbb Z}
\Hom_{\Delta}(A_i,B_{i-n})$ is given by
$$ d(f)_i=f_{i-1}\circ d_i^A+(-1)^nd_{i-n}^B\circ f_i,$$
where $d_i^A:A_i\to A_{i-1}$ and $d_i^B:B_i\to B_{i-1}$ are differentials of $A$ and $B$, respectively. We denote the kernel of $d$ by $\mathcal Z^n(A,B)\subseteq\underline{\Hom}^n_{\Delta}(A,B)$. Note that $\mathcal Z^n(A,B)$ consists of exactly those elements of $\underline{\Hom}^n_{\Delta}(A,B)$ which are maps of complexes of degree $n$ from $A$ to $B$.
\end{defn}
\begin{lemma}\label{adjointness} There are natural isomorphisms:
$$\Ext^n_{\Gamma}(\mathbb L\pi_{\mathbf E_!}(C),D)\cong
\Ext^n_{\Gamma(\mathbf E)}
(C,\pi^*_{\mathbf E}(D))
\quad(\forall n\in\mathbb N),$$
for every $C$ in $\mathcal C(\Gamma(\mathbf E))$ and $D$ in $\mathcal C(\Gamma)$.
\end{lemma}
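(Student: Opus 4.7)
My approach is to realize both $\Ext$ groups as the cohomology of a single bicomplex via the module-level adjunction $(\pi_{\mathbf E!},\pi^*_{\mathbf E})$, followed by a ``derive on both sides'' argument. At the module level, $\pi_{\mathbf E!}$ is the $\textrm{Ker}(\mathbf E)$-coinvariants functor, $\pi^*_{\mathbf E}$ is the exact inflation along $\Gamma(\mathbf E)\twoheadrightarrow\Gamma$, and the standard adjunction
$$\Hom_\Gamma(\pi_{\mathbf E!}M,N)\cong\Hom_{\Gamma(\mathbf E)}(M,\pi^*_{\mathbf E}N)$$
extends degreewise to an isomorphism of Hom-bicomplexes. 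I would fix an injective resolution $D\to J^\bullet$ in $\mathcal{M}(\Gamma)$ and form
$$K^{p,q}=\underline{\Hom}_\Gamma(\pi_{\mathbf E!}(C\otimes\mathbb Z\mathbf E_p),J^q)\cong\underline{\Hom}_{\Gamma(\mathbf E)}(C\otimes\mathbb Z\mathbf E_p,\pi^*_{\mathbf E}J^q).$$
By the definition preceding Lemma \ref{7.8a} we have $\pi_{\mathbf E!}(C\otimes\mathbb Z\mathbf E_*)=\mathbb L\pi_{\mathbf E!}(C)$, and $J^\bullet$ is an injective resolution of $D$, so the total complex of $K^{\bullet,\bullet}$ computes $\Ext^n_\Gamma(\mathbb L\pi_{\mathbf E!}(C),D)$ via the left-hand presentation.

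The other identification reduces to the vanishing
$$\Ext^i_{\Gamma(\mathbf E)}(C\otimes\mathbb Z\mathbf E_p,\pi^*_{\mathbf E}J)=0\qquad (i>0,\ J\in\mathcal{M}(\Gamma)\text{ injective}). \qquad (*)$$
Granted $(*)$, the complex $\pi^*_{\mathbf E}J^\bullet$ is a $\Hom_{\Gamma(\mathbf E)}(C\otimes\mathbb Z\mathbf E_p,-)$-acyclic resolution of $\pi^*_{\mathbf E}D$, so the total complex of $K^{\bullet,\bullet}$ computes the hyperext of $C\otimes\mathbb Z\mathbf E_*$ into $\pi^*_{\mathbf E}D$; since the simplicial set $\mathbf E_*$ is contractible, the augmentation $C\otimes\mathbb Z\mathbf E_*\to C$ is a quasi-isomorphism in $\mathcal{M}(\Gamma(\mathbf E))$, so this hyperext agrees with $\Ext^n_{\Gamma(\mathbf E)}(C,\pi^*_{\mathbf E}D)$.

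To prove $(*)$, I would apply the tensor-hom adjunction (valid since $\mathbb Z\mathbf E_p$ is $\mathbb Z$-free) to reduce to showing that $\Hom_\mathbb Z(\mathbb Z\mathbf E_p,\pi^*_{\mathbf E}J)$ is an injective object of $\mathcal{M}(\Gamma(\mathbf E))$. Composing the tensor-hom adjunction with the module adjunction above gives
$$\Hom_{\Gamma(\mathbf E)}(A,\Hom_\mathbb Z(\mathbb Z\mathbf E_p,\pi^*_{\mathbf E}J))\cong\Hom_\Gamma(\pi_{\mathbf E!}(A\otimes\mathbb Z\mathbf E_p),J),$$
and the right-hand side is exact in $A$ because $A\mapsto\pi_{\mathbf E!}(A\otimes\mathbb Z\mathbf E_p)\cong A\otimes_{\mathbb Z[\textrm{Ker}(\mathbf E)]}\mathbb Z\mathbf E_p$ is exact (since $\mathbb Z\mathbf E_p$ is $\textrm{Ker}(\mathbf E)$-free, $\textrm{Ker}(\mathbf E)$ acting freely on $\mathbf E_p$), and $\Hom_\Gamma(-,J)$ is exact by $\Gamma$-injectivity of $J$. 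The main obstacle is correctly interleaving these two adjunctions to establish this injectivity; once that is in hand, the rest is a routine collapse of the spectral sequences of $K^{\bullet,\bullet}$.
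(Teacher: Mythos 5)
Your argument is correct and follows essentially the same route as the paper: both proofs fix an injective resolution of $D$, pass through the module-level adjunction $\pi_{\mathbf E!}\dashv\pi^*_{\mathbf E}$, then apply the tensor--hom adjunction together with the $\mathrm{Ker}(\mathbf E)$-freeness of $\mathbb Z\mathbf E_p$ to establish the needed injectivity/acyclicity. The only cosmetic difference is that the paper moves $\mathbb Z\mathbf E_*$ into the second slot to produce an injective resolution $\underline{\underline{\Hom}}(\mathbb Z\mathbf E_*,\pi^*_{\mathbf E}\widetilde D)$ of $\pi^*_{\mathbf E}D$ and asserts its injectivity, while you keep $\mathbb Z\mathbf E_p$ in the first slot and prove the equivalent acyclicity statement $(*)$ explicitly, so your version is slightly more detailed at the step the paper leaves implicit.
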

\begin{proof} This isomorphism can be explained as an instance of Quillen adjunctions between model categories, or $\infty$-adjunctions between $(\infty,1)$-categories. However we will give
a simple direct proof. Let $\widetilde{D}$ be an resolution of $D$ by injective $\Gamma$-modules. The groups  $\Ext^n_{\Gamma}(\mathbb L\pi_{\mathbf E_!}(C),D)$ are the homologies of 
$\underline{\Hom}_{\Gamma}(\mathbb L\pi_{\mathbf E_!}(C), \widetilde{D})$. Then we have:
$$\underline{\Hom}_{\Gamma}(\mathbb L\pi_{\mathbf E_!}(C), \widetilde{D})\cong
\underline{\Hom}_{\Gamma}(\pi_{\mathbf{E}_!}(C\otimes
\mathbb Z\mathbf E_*),\widetilde{D})\cong
\underline{\Hom}_{\Gamma(\mathbf{E})}(C\otimes
\mathbb Z\mathbf E_*, \pi^*_{\mathbf{E}}(\widetilde{D})),$$
where we used the definition of $\mathbb L\pi_{\mathbf E_!}$ in the first isomorphism, and the fact that $\pi_{\mathbf{E}_!}$ is the left adjoint of $\pi^*_{\mathbf E}$ in the second. Moreover
$$\underline{\Hom}_{\Gamma(\mathbf{E})}(C\otimes
\mathbb Z\mathbf E_*, \pi^*_{\mathbf{E}}(\widetilde{D}))
\cong\underline{\Hom}_{\Gamma(\mathbf{E})}(C ,\underline{\underline{\Hom}}(\mathbb Z\mathbf E_*, \pi^*_{\mathbf{E}}(\widetilde{D}))),$$
where $\underline{\underline{\Hom}}(\cdot,\cdot)$ denotes the internal Hom in the category of $\Gamma(\mathbf E)$-complexes. (Explicitly  $\underline{\underline{\Hom}}(\cdot,\cdot)$ is the mapping complex of the underlying $\mathbb Z$-complexes which we equip with a continuous $\Gamma$-action via conjugation.) In order to conclude it is enough to note that
$\underline{\underline{\Hom}}(\mathbb Z\mathbf E_*, \pi^*_{\mathbf{E}}(\widetilde{D}))$ is an injective resolution of $\pi_{\mathbf{E}}^*(D)$.
\end{proof}
\begin{lemma}\label{7.10a} There are natural isomorphisms:
$$\HH^n(\Gamma(\mathbf E),\overline F^*)\cong
 \HH^n(\Gamma,\mathbb Z B(\mathbf{E})_*^{\vee})
 \quad(\forall n\in\mathbb N).$$
 \end{lemma}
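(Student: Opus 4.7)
The plan is to reduce the claim via a chain of natural isomorphisms to Lemmas \ref{7.8a} and \ref{adjointness}, the crux being a derived tensor-Hom adjunction. Unwinding the right-hand side, by the definitions of hypercohomology and of the dual complex,
$$\HH^n(\Gamma,\mathbb Z B(\mathbf{E})_*^{\vee}) = \Ext^n_{\Gamma}\bigl(\mathbb Z,\underline{\Hom}_{\mathbb Z}(\mathbb Z B(\mathbf{E})_*,\overline F^*)\bigr).$$

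Next, I would exploit the fact that $\mathbb Z B(\mathbf{E})_*$ is degreewise a free $\mathbb Z$-module, being the free abelian group on a simplicial set. After fixing an injective resolution $\overline F^*\to I^\bullet$ in $\mathcal M(\Gamma)$, this degreewise freeness implies both that the totalization of the bicomplex $\underline{\Hom}_{\mathbb Z}(\mathbb Z B(\mathbf{E})_*,I^\bullet)$ is quasi-isomorphic to $\mathbb Z B(\mathbf{E})_*^{\vee}$ and that each $\underline{\Hom}_{\mathbb Z}(\mathbb Z B(\mathbf{E})_p,I^q)$ is an injective $\Gamma$-module. Passing to $\Gamma$-invariants on this totalization thus simultaneously computes $\HH^n(\Gamma,\mathbb Z B(\mathbf{E})_*^{\vee})$ and $\Ext^n_{\Gamma}(\mathbb Z B(\mathbf{E})_*,\overline F^*)$, yielding
$$\HH^n(\Gamma,\mathbb Z B(\mathbf{E})_*^{\vee})\cong \Ext^n_{\Gamma}(\mathbb Z B(\mathbf{E})_*,\overline F^*).$$

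Finally, I would invoke Lemma \ref{7.8a} to identify $\mathbb Z B(\mathbf{E})_*\cong \mathbb L\pi_{\mathbf{E}!}(\mathbb Z)$ and apply Lemma \ref{adjointness} with $C=\mathbb Z$ (carrying the trivial $\Gamma(\mathbf{E})$-action) and $D=\overline F^*$ to obtain
$$\Ext^n_{\Gamma}(\mathbb L\pi_{\mathbf{E}!}(\mathbb Z),\overline F^*)\cong \Ext^n_{\Gamma(\mathbf{E})}(\mathbb Z,\pi^*_{\mathbf{E}}(\overline F^*)) = H^n(\Gamma(\mathbf{E}),\overline F^*).$$
Concatenating the three displayed isomorphisms proves the lemma. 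The only non-formal step is the middle one; the main obstacle there is justifying that $\underline{\Hom}_{\mathbb Z}(\mathbb Z B(\mathbf{E})_p,I^q)$ is injective as a $\Gamma$-module, but this is a standard consequence of the fact that $\underline{\Hom}_{\mathbb Z}(F,-)$ is exact and preserves injectivity whenever $F$ is a free $\mathbb Z$-module, so the argument reduces to routine derived-category bookkeeping.
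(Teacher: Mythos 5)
Your proposal is correct and follows the same chain of isomorphisms as the paper: identify both sides as $\Ext$-groups, apply Lemma \ref{adjointness} to move across $\pi_{\mathbf E!}\dashv\pi^*_{\mathbf E}$, and invoke Lemma \ref{7.8a} to substitute $\mathbb Z B(\mathbf{E})_*$ for $\mathbb L\pi_{\mathbf{E}!}(\mathbb Z)$. The only presentational difference is in the middle identification $\Ext^n_{\Gamma}(\mathbb Z B(\mathbf{E})_*,\overline F^*)\cong\HH^n(\Gamma,\mathbb Z B(\mathbf{E})_*^{\vee})$: the paper collapses a local-to-global $\Ext$ spectral sequence using divisibility of $\overline F^*$, while you build an explicit first-quadrant bicomplex from an injective resolution and use degreewise $\mathbb Z$-freeness of $\mathbb Z B(\mathbf{E})_*$ together with the tensor-Hom adjunction to show each term is $\Gamma$-injective --- two dual phrasings of the same collapse.
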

 \begin{proof} By the uniqueness of $n$-th derived functors we have:
 $$\HH^n(\Gamma(\mathbf{E}),\overline F^*)
 \stackrel{\textrm{def}}{=}
 \HH^n(\Gamma(\mathbf{E}),\pi^*_\mathbf{E}(\overline F^*))
 \cong  \Ext^n_{\Gamma(\mathbf{E})}(\mathbb Z,\pi^*_\mathbf{E}(\overline F^*)),$$
as there is a natural isomorphism $\HH^n(\Delta,C)\cong  \Ext^n_{\Delta}(\mathbb Z,C)$ (where $C$ is an object of $\mathcal C(\Delta)$ and $\Delta$ is any pro-finite group). By Lemma \ref{adjointness} we have:
$$\Ext^n_{\Gamma(\mathbf{E})}(\mathbb Z,\pi^*_\mathbf{E}(\overline F^*))\cong\Ext^n_{\Gamma}(\mathbb L\pi_{\mathbf E_!}(\mathbb Z),\overline F^*).$$
Note that there is a spectral sequence:
$$\Ext^p_{\Gamma}(\mathbb Z,
\underline{\underline{\textrm{Ext}}}^q(\mathbb L\pi_{\mathbf E_!}(\mathbb Z),\overline F^*))\Rightarrow
\Ext^{p+q}_{\Gamma}(\mathbb L\pi_{\mathbf E_!}(\mathbb Z),\overline F^*),$$
where $\underline{\underline{\textrm{Ext}}}^*(\mathbb L\pi_{\mathbf E_!}(\mathbb Z),\cdot)$ is the derived functor of
$\underline{\underline{\Hom}}(\mathbb L\pi_{\mathbf E_!}(\mathbb Z),\cdot)$. Since $\overline F^*$ is divisible, this sequence degenerates, and hence we have an isomorphism:
$$\Ext^n_{\Gamma}(\mathbb L\pi_{\mathbf E_!}(\mathbb Z),\overline F^*)\cong
\Ext^n_{\Gamma}(\mathbb Z,\mathbb L\pi_{\mathbf E_!}(\mathbb Z)^{\vee}),$$
while by Lemma \ref{7.8a} and by the uniqueness of $n$-th derived functors we have:
$$\Ext^n_{\Gamma}(\mathbb Z,\mathbb L\pi_{\mathbf E_!}(\mathbb Z)^{\vee})\cong
\Ext^n_{\Gamma}(\mathbb Z,\mathbb ZB(\mathbf{E})_*^{\vee})
\cong
\HH^n(\Gamma,\mathbb ZB(\mathbf{E})_*^{\vee}).$$
\end{proof}
\begin{lemma}\label{7.10} Assume that $F$ is either a global or a local field. Then we have: 
$$\textrm{\rm Br}(\mathbf E)\cong\HH^2(\Gamma,\tau_{>0}(\mathbb Z B(\mathbf{E})_*)^{\vee}).$$
\end{lemma}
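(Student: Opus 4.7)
The plan is to combine the distinguished triangle defining $\tau_{>0}(\mathbb Z B(\mathbf{E})_*)$ with Lemma \ref{7.10a} and the vanishing of $H^3(\Gamma,\overline F^*)$. Concretely, I start from the distinguished triangle
$$\tau_{>0}(\mathbb Z B(\mathbf{E})_*)\xrightarrow{f_0}\mathbb Z B(\mathbf{E})_*\xrightarrow{\deg}\mathbb Z\rightarrow\tau_{>0}(\mathbb Z B(\mathbf{E})_*)[1]$$
and dualise it. Since $\overline F^*$ is divisible, the functor $(-)^{\vee}=\textrm{Hom}(-,\overline F^*)$ preserves distinguished triangles, giving
$$\overline F^*\rightarrow\mathbb Z B(\mathbf{E})_*^{\vee}\rightarrow\tau_{>0}(\mathbb Z B(\mathbf{E})_*)^{\vee}\rightarrow\overline F^*[1].$$

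Next I apply the hypercohomology functor $\HH^*(\Gamma,-)$ to get a long exact sequence, and truncate to the relevant degrees:
$$H^2(\Gamma,\overline F^*)\xrightarrow{\alpha}\HH^2(\Gamma,\mathbb Z B(\mathbf{E})_*^{\vee})\rightarrow\HH^2(\Gamma,\tau_{>0}(\mathbb Z B(\mathbf{E})_*)^{\vee})\rightarrow H^3(\Gamma,\overline F^*).$$
By the references cited in the proof of Lemma \ref{2.2} we have $H^3(\Gamma,\overline F^*)=0$ in both the local and global case, so the third arrow lands in $0$ and the map to $\HH^2(\Gamma,\tau_{>0}(\mathbb Z B(\mathbf{E})_*)^{\vee})$ is surjective with kernel equal to $\img(\alpha)$. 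Using Lemma \ref{7.10a} to identify $\HH^2(\Gamma,\mathbb Z B(\mathbf{E})_*^{\vee})\cong H^2(\Gamma(\mathbf E),\overline F^*)$, it remains to show that $\alpha$ corresponds to $\pi_{\mathbf E}^*$ under this identification; the conclusion is then exactly $\textrm{Br}(\mathbf E)=\coker(\pi_{\mathbf E}^*)\cong\HH^2(\Gamma,\tau_{>0}(\mathbb Z B(\mathbf{E})_*)^{\vee})$.

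The main obstacle is this last naturality check. The isomorphism in Lemma \ref{7.10a} is a composition of the derived adjunction isomorphism $\Ext^n_{\Gamma(\mathbf E)}(\mathbb Z,\pi^*_\mathbf{E}(\overline F^*))\cong\Ext^n_{\Gamma}(\mathbb L\pi_{\mathbf E!}(\mathbb Z),\overline F^*)$ (Lemma \ref{adjointness}) with the duality $\Ext^n_{\Gamma}(\mathbb L\pi_{\mathbf E!}(\mathbb Z),\overline F^*)\cong\HH^n(\Gamma,\mathbb L\pi_{\mathbf E!}(\mathbb Z)^{\vee})$ and the identification $\mathbb L\pi_{\mathbf E!}(\mathbb Z)\cong\mathbb Z B(\mathbf{E})_*$ (Lemma \ref{7.8a}). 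Under these identifications, the map $\pi_{\mathbf E}^*:H^n(\Gamma,\overline F^*)\to H^n(\Gamma(\mathbf E),\overline F^*)$ corresponds to precomposition with the adjunction counit $\mathbb L\pi_{\mathbf E!}(\mathbb Z)\to\pi_{\mathbf E!}(\mathbb Z)=\mathbb Z$, which is precisely the degree map $\deg$; dually this is the map $\overline F^*\to\mathbb Z B(\mathbf{E})_*^{\vee}$ appearing in the long exact sequence. Thus $\alpha=\pi_{\mathbf E}^*$.

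In summary the argument is three short steps: (i) dualise the defining triangle of $\tau_{>0}(\mathbb Z B(\mathbf{E})_*)$; (ii) take hypercohomology, using $H^3(\Gamma,\overline F^*)=0$ to obtain a cokernel description; (iii) identify the outgoing map via Lemmas \ref{7.8a}, \ref{adjointness}, and \ref{7.10a} with $\pi_{\mathbf E}^*$. Naturality of the adjunction is the only nontrivial input and is straightforward once the identifications are unwound.
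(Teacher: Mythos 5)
Your proposal is correct and follows essentially the same route as the paper's own proof: dualise the defining triangle, invoke $H^3(\Gamma,\overline F^*)=0$ to get a cokernel description in degree two, and identify the connecting map with $\pi_{\mathbf E}^*$ by observing that under the isomorphisms of Lemmas \ref{7.8a}, \ref{adjointness} and \ref{7.10a} the adjunction counit $\mathbb L\pi_{\mathbf E!}(\mathbb Z)\to\mathbb Z$ is the degree map. The only cosmetic difference is that you justify the dualisation preserving distinguished triangles by injectivity of the divisible group $\overline F^*$, which the paper leaves implicit.
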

\begin{proof} The distinguished triangle:
$$\CD\tau_{>0}(\mathbb Z B(\mathbf{E})_*)@>f_0>>\mathbb Z B(\mathbf{E})_*@>{\deg}>>
\mathbb Z@>>>\tau_{>0}(\mathbb Z B(\mathbf{E})_*)[1]\endCD$$
gives rise to another distinguished triangle:
$$\CD\overline F^*
@>>>\mathbb Z B(\mathbf{E})_*^{\vee}@>>>
\tau_{>0}(\mathbb Z B(\mathbf{E})_*)^{\vee}
@>>>\overline F^*[1]\endCD$$
by taking duals. Since $H^3(\Gamma,\overline F^*)=0$ (see Proposition 15 of \cite{Se} on page 93 when $F$ is a local field, and see Corollary 4.21 of \cite{Mi}, page 80 when $F$ is a global field), the associated cohomological long exact sequence looks like:
$$\HH^2(\Gamma,\overline F^*) \to \HH^2(\Gamma,\mathbb Z B(\mathbf{E})_*^{\vee}) \to \HH^2(\Gamma,\tau_{>0}(\mathbb Z B(\mathbf{E})_*)^{\vee} )\to 0.$$
The first map is the composition:
$$\HH^2(\Gamma,\overline F^*)\cong 
\textrm{Ext}^2_{\Gamma}(\mathbb Z,\overline F^*)
\to\textrm{Ext}^2_{\Gamma}(\mathbb{Z} B(\mathbf{E})_*,\overline F^*)\cong \HH^2(\Gamma,\mathbb Z B(\mathbf{E})_*^{\vee}),$$
where the middle map is induced by the degree map deg$:\mathbb Z B(\mathbf{E})_* \to \mathbb{Z}$. The derived adjunction $\mathbb L\pi_{\mathbf E_!} \dashv \pi^*$ give rise to a co-unit map: $\mathbb L\pi_{\mathbf E_!}(\pi^*(\mathbb{Z})) \to \mathbb{Z}$ which, under the identification in Lemma \ref{7.8a}, is  deg. Using Lemma \ref{7.10a} the first map of the sequence above can be viewed as a homomorphism:
$$H^2(\Gamma,\overline F^*)=
\HH^2(\Gamma,\overline F^*) \to 
\HH^2(\Gamma,\mathbb Z B(\mathbf{E})_*^{\vee})=
H^2(\Gamma(\mathbf{E}),\overline F^*).$$
As this map is induced by the co-unit, it is the pull-back map (with respect to the surjection $\Gamma(\mathbf E)\to\Gamma$). The cokernel of the latter is Br$(\mathbf E)$ by definition, so the claim follows.
\end{proof}
\begin{cor}\label{7.13} Assume that $F$ is a global field. Then we have: 
$$\B(\mathbf{E})\cong\Sh^2(F,\tau_{>0}(\mathbb Z B(\mathbf{E})_*)^{\vee}).$$
\end{cor}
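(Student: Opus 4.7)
The plan is to promote Lemma \ref{7.10} to a compatibility statement between the isomorphism $\textrm{Br}(\mathbf E)\cong\HH^2(\Gamma,\tau_{>0}(\mathbb Z B(\mathbf{E})_*)^{\vee})$ and localization at each place $x\in|F|$, and then to read off the claim by taking intersections of kernels. Concretely, I would aim to establish that for every $x\in|F|$ the square
$$\CD
\textrm{Br}(\mathbf E) @>\sim>> \HH^2(\Gamma,\tau_{>0}(\mathbb Z B(\mathbf{E})_*)^{\vee})\\
@V{j_x}VV @V{\iota_x^*}VV\\
\textrm{Br}(\mathbf E_x) @>\sim>> \HH^2(\Gamma_x,\tau_{>0}(\mathbb Z B(\mathbf{E})_*)^{\vee})
\endCD$$
commutes, where the horizontal arrows are the isomorphisms supplied by Lemma \ref{7.10} applied to $\mathbf E$ and $\mathbf E_x$, respectively. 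Here I use that $\textrm{Ker}(\mathbf E)=\textrm{Ker}(\mathbf E_x)$ and that, as a simplicial $\Gamma_x$-object, $B(\mathbf E_x)_*$ is simply the pullback of $B(\mathbf E)_*$ along $\iota_x$, so the complex appearing on the right is genuinely the same one, now viewed over $\Gamma_x$.

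Granting this square, the corollary is immediate: taking intersection over $x$, the left column yields $\B(\mathbf E)=\bigcap_{x}\ker(j_x)$ by Definition \ref{2.3}, while the right column yields $\bigcap_{x}\ker(\iota_x^*)=\Sh^2(F,\tau_{>0}(\mathbb Z B(\mathbf E)_*)^{\vee})$ by the definition of the Shafarevich--Tate hypercohomology group. So the whole argument is reduced to naturality of the construction in the proof of Lemma \ref{7.10} under restriction from $\Gamma$ to $\Gamma_x$ along $\iota_x$.

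To check this naturality, I would walk through the proof of Lemma \ref{7.10} once more and observe that every step is formally natural in the ambient profinite group. The derived adjunction $\mathbb L\pi_{\mathbf E!}\dashv\pi_{\mathbf E}^*$ of Lemma \ref{adjointness}, the identification $\mathbb L\pi_{\mathbf E!}(\mathbb Z)\cong\mathbb Z B(\mathbf E)_*$ of Lemma \ref{7.8a}, the degeneration of the $\Ext$-spectral sequence that requires divisibility of $\overline F^*$ (and $\overline F_x^*$), and the formation of the distinguished triangle defining $\tau_{>0}$ are all compatible with pulling back along $\iota_x$. The one point requiring care is the change of coefficients from $\overline F^*$ to $\overline F_x^*$ that hides inside $j_x$: this is encoded by the map $(\eta_x)_*$ in the commutative diagram before Definition \ref{2.3}. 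The useful simplification is that the homology groups of $\tau_{>0}(\mathbb Z B(\mathbf E)_*)$ are finite groups annihilated by a power of $|\textrm{Ker}(\mathbf E)|$ (hence prime to $\textrm{char}(F)$), so dualizing into $\overline F^*$ or into $\overline F_x^*$ yields canonically the same Cartier-dual $\Gamma_x$-module; this means the right-hand column really can be written with the same coefficient system.

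The main obstacle is the bookkeeping in the last paragraph: one has to chase the identifications in the proof of Lemma \ref{7.10} in enough detail to verify that the explicit localization $(\eta_x)_*\circ(\textrm{id}_{G_1}\times\iota_x)^*$ on $H^2(\Gamma(\mathbf E),\overline F^*)$ really does correspond to $\iota_x^*$ on hypercohomology, not some twist of it. Once this is confirmed, no additional input is needed and the corollary drops out of Lemma \ref{7.10} by passage to the kernel of $\prod_{x\in|F|}\iota_x^*$.
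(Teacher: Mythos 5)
Your proposal is correct and is essentially the paper's own argument: the paper's one-sentence proof ("This follows from Lemma \ref{7.10} applied to $F$ and all its completions") is precisely the assertion that the isomorphism of Lemma \ref{7.10} is compatible with the localization maps $j_x$ and $\iota_x^*$, and you have simply made that compatibility explicit and verified it. Your identification of the commuting square and the subsequent passage to kernels is exactly what is needed, and your naturality check on the constructions in the proof of Lemma \ref{7.10} (the derived adjunction, the degeneration of the $\Ext$-spectral sequence using divisibility of $\overline F^*$ and $\overline F_x^*$, and the $\tau_{>0}$ truncation) supplies the details the paper leaves to the reader.
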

\begin{proof} This follows from Lemma \ref{7.10} applied to $F$ and all its completions. 
\end{proof}
\begin{rem}\label{7.14} The Postnikov truncation
$$h_1:\tau_{>0}(\mathbb Z B(\mathbf{E})_*)\to
\textrm{Ker}(\mathbf E)[1]$$
furnishes an isomorphism:
$$\Sh^1(F,\tau_{>0}(\mathbb Z B(\mathbf E)_*)
\longrightarrow
\Sh^1(F,\textrm{Ker}(\mathbf E)[1])\cong
\Sh^2(F,\textrm{Ker}(\mathbf E)).$$
\end{rem}
\begin{defn}\label{7.15a} Let $s:\Gamma\to\Gamma(\mathbf E)$ be a continuous section
of the map $\pi_{\mathbf E}:\Gamma(\mathbf E)\to\Gamma$. The identity map
$\textrm{id}\in\textrm{Ext}^0_{\Gamma}(\mathbb L\pi_{\mathbf E_!}(\mathbb Z),\mathbb L\pi_{\mathbf E_!}(\mathbb Z))$ via the isomorphism:
$$\textrm{Ext}^0_{\Gamma}(\mathbb L\pi_{\mathbf E_!}(\mathbb Z),\mathbb L\pi_{\mathbf E_!}(\mathbb Z)))\cong
\textrm{Ext}^0_{\Gamma(\mathbf E)}
(\mathbb Z,\pi^*_{\mathbf E}(\mathbb L\pi_{\mathbf E_!}(\mathbb Z)))$$
furnished by Lemma \ref{adjointness} furnishes an element $\textrm{id}_{\mathbf E}\in
\textrm{Ext}^0_{\Gamma(\mathbf E)}
(\mathbb Z,\pi^*_{\mathbf E}(\mathbb L\pi_{\mathbf E_!}(\mathbb Z)))$. 
By pulling back with respect to $s$ we get an element:
$$s^*(\textrm{id}_{\mathbf E})\in
\textrm{Ext}^0_{\Gamma}(s^*(\mathbb Z),s^*(\pi^*_{\mathbf E}(
\mathbb L\pi_{\mathbf E!}(\mathbb Z)))).$$
Since $s^*\circ\pi^*_{\mathbf E}=\textrm{id}_{\Gamma}^*=\textrm{id}$ and
$s^*(\mathbb Z)\cong\mathbb Z$, we get an element:
$$[s]\in
\textrm{Ext}^0_{\Gamma}(\mathbb Z,\mathbb L\pi_{\mathbf E!}(\mathbb Z))\cong \HH^0(\Gamma, \mathbb Z B(\mathbf E)_*)$$
(using Lemma \ref{7.8a}), which we will call the classifying element of the section $s$. Note that the map
$$\HH^0(\Gamma,\mathbb Z B(\mathbf E )_*)\longrightarrow
\HH^0(\Gamma,\mathbb Z)\cong\mathbb Z$$
induced by $\deg$ sends $[s]$ to $1$.
\end{defn}
Note that each element in $b\in\HH^2(\Gam(\mathbf{E}),\overline F^*)$ can be considered as an element of $\HH^2(\Gamma,\mathbb ZB(\mathbf{E})_*^{\vee})$ via the isomorphism in Lemma \ref{7.10a}. Let
$$\cup: \HH^2(\Gamma,\mathbb ZB(\mathbf{E})_*^{\vee}) \times \HH^0(\Gamma,\mathbb ZB(\mathbf{E})_*) \to
\HH^2(\Gamma ,\overline F^*)$$
be the cup product induced by the natural bilinear pairing:
$$\mathbb ZB(\mathbf{E})_*^{\vee}\times
\mathbb ZB(\mathbf{E})_*\longrightarrow
\overline F^*$$
of complexes.
\begin{lemma}\label{7.15} We have:
$$b\cup[s] = s^*(b)\in \HH^2(\Gamma,\overline F^*)$$
for every $b\in\HH^2(\Gam(\mathbf{E}),\overline F^*)$ and continuous section $s$ of $\pi_{\mathbf E}$.
\end{lemma}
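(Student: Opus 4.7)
The strategy is to translate both sides of the desired equation into composition operations in the derived category $D(\Gamma)$, and then verify the equality by a one-line naturality argument involving the adjunction $\mathbb L\pi_{\mathbf E!}\dashv\pi_{\mathbf E}^*$.

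First I unwind the definition of $b\cup[s]$. Under the isomorphism of Lemma \ref{7.10a}, the class $b\in\HH^2(\Gamma(\mathbf E),\overline F^*)=\Ext^2_{\Gamma(\mathbf E)}(\mathbb Z,\pi_{\mathbf E}^*(\overline F^*))$ corresponds first, via the adjunction of Lemma \ref{adjointness}, to a morphism $\widetilde b\in\Ext^2_{\Gamma}(\mathbb L\pi_{\mathbf E!}(\mathbb Z),\overline F^*)$, that is, a map $\widetilde b:\mathbb L\pi_{\mathbf E!}(\mathbb Z)\to\overline F^*[2]$ in $D(\Gamma)$, and then, via the divisibility of $\overline F^*$ and Lemma \ref{7.8a}, to an element of $\HH^2(\Gamma,\mathbb ZB(\mathbf E)_*^{\vee})$. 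The cup product defined by the evaluation pairing $\mathbb ZB(\mathbf E)_*^{\vee}\otimes\mathbb ZB(\mathbf E)_*\to\overline F^*$ is nothing but the Yoneda composition once one rewrites $\HH^2(\Gamma,\mathbb ZB(\mathbf E)_*^{\vee})\cong\Ext^2_\Gamma(\mathbb L\pi_{\mathbf E!}(\mathbb Z),\overline F^*)$ via tensor-hom adjunction. Consequently
$$b\cup[s]\,=\,\widetilde b\circ[s]\in\Hom_{D(\Gamma)}(\mathbb Z,\overline F^*[2])=\HH^2(\Gamma,\overline F^*).$$

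Next I describe $s^*(b)$ in the same language. Because the $\Gamma(\mathbf E)$-action on $\overline F^*$ is by definition pulled back from $\Gamma$, we have $\pi_{\mathbf E}^*(\overline F^*)=\overline F^*$ canonically. By the defining property of the adjunction in Lemma \ref{adjointness}, the element $b$ is recovered from $\widetilde b$ as the composition
$$b\,=\,\pi_{\mathbf E}^*(\widetilde b)\circ\eta_{\mathbb Z}:\ \mathbb Z\xrightarrow{\eta_{\mathbb Z}}\pi_{\mathbf E}^*(\mathbb L\pi_{\mathbf E!}(\mathbb Z))\xrightarrow{\pi_{\mathbf E}^*(\widetilde b)}\pi_{\mathbf E}^*(\overline F^*[2])=\overline F^*[2],$$
where $\eta_{\mathbb Z}=\mathrm{id}_{\mathbf E}$ is the unit of the adjunction at $\mathbb Z$, exactly as in Definition \ref{7.15a}.

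Finally, apply $s^*$ and use the identity $\pi_{\mathbf E}\circ s=\mathrm{id}_\Gamma$, which gives $s^*\circ\pi_{\mathbf E}^*=\mathrm{id}$. By functoriality of $s^*$,
$$s^*(b)=s^*(\pi_{\mathbf E}^*(\widetilde b))\circ s^*(\eta_{\mathbb Z})=\widetilde b\circ s^*(\mathrm{id}_{\mathbf E})=\widetilde b\circ[s],$$
by the very definition of $[s]$ in Definition \ref{7.15a}. Combined with the first paragraph this yields $s^*(b)=\widetilde b\circ[s]=b\cup[s]$, as required.

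The only non-trivial point in this plan is the compatibility asserted at the end of the first paragraph: one must check that the bookkeeping isomorphisms in Lemmas \ref{7.8a}, \ref{adjointness} and \ref{7.10a} send the cup product induced by the concrete pairing $\mathbb ZB(\mathbf E)_*^{\vee}\otimes\mathbb ZB(\mathbf E)_*\to\overline F^*$ to the Yoneda composition in $D(\Gamma)$. This is standard but slightly tedious; it rests on the fact that, for a divisible $\Gamma$-module $N$, the derived Hom-tensor spectral sequence used to produce the isomorphism $\Ext^*_\Gamma(\mathbb L\pi_{\mathbf E!}(\mathbb Z),N)\cong\Ext^*_\Gamma(\mathbb Z,\mathbb L\pi_{\mathbf E!}(\mathbb Z)^{\vee})$ in the proof of Lemma \ref{7.10a} degenerates and identifies Yoneda composition with the evaluation-induced cup product on the right-hand side. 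Once this identification is recorded, the rest of the argument is the three-line naturality calculation above.
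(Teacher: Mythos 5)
Your proof is correct and takes essentially the same approach as the paper: identify the cup product with the Yoneda composition in the derived category via the isomorphisms of Lemmas \ref{7.8a}, \ref{adjointness} and \ref{7.10a}, and then observe that $s^*(b)$ equals the same composite. The paper's own proof is a terse sketch that invokes precisely this comparison and ``leaves the details to the reader''; you have supplied those details, in particular the unit/counit bookkeeping $b=\pi_{\mathbf E}^*(\widetilde b)\circ\eta_{\mathbb Z}$ and the identity $s^*\circ\pi_{\mathbf E}^*=\mathrm{id}$, which is exactly what is needed.
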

\begin{proof} This claim follows at once from comparing the cup product above with the Yoneda pairing:
$$\textrm{Ext}^0_{\Gamma}(\mathbb Z,\mathbb ZB(\mathbf{E})_*)
\times\textrm{Ext}^2_{\Gamma}(\mathbb ZB(\mathbf{E})_*,\overline F^*) \to\textrm{Ext}^2_{\Gamma}(\mathbb Z,\overline F^*)$$
via the isomorphisms 
$$\HH^0(\Gamma,\mathbb ZB(\mathbf{E})_*)\cong \textrm{Ext}^0_{\Gamma}(\mathbb Z,\mathbb ZB(\mathbf{E})_*),\quad
\HH^2(\Gamma,\mathbb ZB(\mathbf{E})_*^{\vee})\cong
\textrm{Ext}^2_{\Gamma}(\mathbb ZB(\mathbf{E})_*,
\overline F^*),$$
$$\HH^2(\Gamma,\overline F^*)\cong \textrm{Ext}^0_{\Gamma}(\mathbb Z,\overline F^*).$$ 
We leave the details to the reader. 
\end{proof}
\begin{defn}\label{7.16} Let $M$ be a discrete finite abelian $\Gamma$-module. For every embedding problem $\mathbf E$ over $F$ such that $\textrm{Ker}(\mathbf E)=M$ and the set $\textrm{Sol}_{\mathbb A}(\mathbf E)$ is non-empty let $c_{\mathbf E}\in H^2(F,M)=H^2(F,\textrm{Ker}(\mathbf E))$ denote the class of the extension (\ref{1.3.1}). Note that $c_{\mathbf E
}\in\Sh^2(F,M)$ since we assumed that $\textrm{Sol}_{\mathbb A}(\mathbf E)$ is non-empty. Conversely for every $c\in \Sh^2(F,M)$ there is an embedding problem $\mathbf E$ as above such that $c=c_{\mathbf E}$. Let
\begin{equation}
\b:\Sh^1(F,M^{\vee})\times\Sh^2(F,M)\rightarrow\mathbb Q/\mathbb Z
\end{equation}
be the unique pairing such that $\b(b,c_{\mathbf E})=\b_{\mathbf E}(b)$ for every $b\in \Sh^1(F,M^{\vee})$ and embedding problem $\mathbf E$ as above. Since for every $b\in \Sh^1(F,M^{\vee})$ the value of $\b_{\mathbf E}(b)$ only depends on the isomorphism class of the embedding problem $\mathbf E$ (see the proof of Lemma \ref{4.7}), the pairing $\b$ is well-defined. Assume now that $\textrm{\rm char}(F)$ does not divide the order of $M$ and let
$$\tau:\Sh^1(F,M^{\vee})\times\Sh^2(F,M)\longrightarrow
\mathbb Q/\mathbb Z$$
denote the Tate duality pairing.
\end{defn}
\begin{thm}\label{pairing_theorem} We have $\b=-\tau$.
\end{thm}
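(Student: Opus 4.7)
The strategy is to reinterpret both $\b$ and $\tau$ as specializations of the Poitou--Tate pairing for the complex $C=\tau_{>0}(\mathbb Z B(\mathbf E)_*)$ furnished by Theorem \ref{l:Sha}. Corollary \ref{7.13} identifies $\B(\mathbf E) \cong \Sh^1(F,M^{\vee})$ with $\Sh^2(F, C^\vee)$, and Remark \ref{7.14} identifies $\Sh^2(F, M)$ with $\Sh^1(F, C)$. Given $b \in \Sh^1(F, M^\vee)$ and $c=c_{\mathbf E} \in \Sh^2(F, M)$, we thus obtain dual classes $\widetilde b \in \Sh^2(F, C^\vee)$ and $\widetilde c \in \Sh^1(F, C)$; denote by $\langle \widetilde c, \widetilde b\rangle_C$ the Poitou--Tate pairing evaluated on them.

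First I would show $\b_{\mathbf E}(b) = \langle \widetilde c, \widetilde b\rangle_C$. The distinguished triangle
$$C \longrightarrow \mathbb Z B(\mathbf E)_* \longrightarrow \mathbb Z \longrightarrow C[1]$$
exhibits $\widetilde c$ as the image of $1 \in \HH^0(\Gamma, \mathbb Z)$ under the connecting homomorphism; one checks that the Postnikov truncation $h_1\colon C\to\textrm{Ker}(\mathbf E)[1]$ carries this class to $c_{\mathbf E}$. For any ad\`elic solution $\{h_x\}$, the classifying class $[s(h_x)] \in \HH^0(\Gamma_x, \mathbb Z B(\mathbf E_x)_*)$ of Definition \ref{7.15a} is a local lift of $1$, so the family $\{[s(h_x)]\}$ witnesses that $\widetilde c \in \Sh^1(F,C)$. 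By the standard local-cobound formula for the Poitou--Tate pairing in bidegree $(1,2)$,
$$\langle \widetilde c,\widetilde b\rangle_C=\sum_{x\in|F|} \textrm{\rm inv}_x\bigl([s(h_x)]\cup \iota_x^*(\widetilde b')\bigr),$$
where $\widetilde b'\in \HH^2(\Gamma, \mathbb Z B(\mathbf E)_*^\vee)\cong H^2(\Gamma(\mathbf E), \overline F^*)$ is any lift of $\widetilde b$ via Lemma \ref{7.10a}. By Lemma \ref{7.15} each summand equals $\textrm{\rm inv}_x(s(h_x)^*(\iota_x^*(\widetilde b')))=\textrm{\rm inv}_x(b_x)$, which by Definition \ref{4.8} recovers $\b_{\mathbf E}(b)$.

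Next I would compare $\langle \widetilde c, \widetilde b\rangle_C$ with $-\tau(b,c)$. Naturality of the pairing in Theorem \ref{l:Sha} under $h_1$ and its dual $h_1^\vee\colon M^\vee[-1]\to C^\vee$ reduces this to the assertion that the Poitou--Tate pairing on the one-term complex $M[1]$ agrees, under the canonical shift identifications $\Sh^1(F, M[1])\cong \Sh^2(F, M)$ and $\Sh^2(F, M^\vee[-1])\cong \Sh^1(F, M^\vee)$, with $-\tau$. This is a direct cocycle-level comparison of the defining formulas.

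The main obstacle is sign-tracking. Up to sign everything is a formal consequence of Theorem \ref{l:Sha}, Lemma \ref{7.15}, Corollary \ref{7.13} and Remark \ref{7.14}. The $-1$ arises from the Koszul sign rule when passing from the triangle $C\to\mathbb Z B(\mathbf E)_*\to\mathbb Z\to C[1]$ to its dual $\overline F^*\to \mathbb Z B(\mathbf E)_*^\vee\to C^\vee\to\overline F^*[1]$, or equivalently from the sign discrepancy between ``$\partial$ then $\cup$'' and ``$\cup$ then $\partial$'' in the Leibniz rule for the boundary of a cup product. Pinning down the accumulated sign to be exactly $-1$ requires fixing explicit conventions for the connecting maps, the Postnikov truncation, the evaluation pairing $M\otimes M^\vee\to\overline F^*$ and the local invariants, and then carefully bookkeeping through each step.
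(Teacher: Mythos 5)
Your proposal is structurally aligned with the paper's proof: both pass through Theorem \ref{l:Sha}, the identifications of Corollary \ref{7.13} and Remark \ref{7.14}, and the ``$[s]\cup b = s^*(b)$'' observation of Lemma \ref{7.15}. However, there is a genuine gap at the heart of the argument, and it lies exactly where the theorem's content is.

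The crucial step you write as ``by the standard local-cobound formula for the Poitou--Tate pairing in bidegree $(1,2)$, $\langle \widetilde c,\widetilde b\rangle_C=\sum_{x}\inv_x([s(h_x)]\cup \iota_x^*(\widetilde b'))$'' is not a quotable formula for the pairing on \emph{complexes} in the sense of Theorem \ref{l:Sha}; it is the key computation that has to be established, and when done correctly (see Proposition \ref{7.20}) it reads $\langle\delta,b\rangle=-\sum_{x}\inv_x([s_x]\cup\overline{b}_x)$ with a minus sign. That sign does not come from a Koszul or shift convention at all: in the paper's Milne-style cocycle computation it arises from ordinary arithmetic of chains, via $h_x = g_x - f_x$ (where $g_x$ is the restriction of a global $0$-cochain hitting $1$ under $\deg$, and $f_x$ represents the local classifying element $[s_x]$), so that $[h_x\cup\beta_x - \epsilon_x]=[-f_x\cup\overline\beta_x]$. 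Once that formula is in hand, the compatibility diagram relating the two pairings via $h_1$ and $h_1^\vee$ commutes \emph{without} a further sign, and $\b=-\tau$ drops out. Your version moves the $-1$ into the shift comparison $M[1]$ versus $M$ and into a separate ``Koszul bookkeeping,'' but neither of these is checked, and you acknowledge this (``Pinning down the accumulated sign to be exactly $-1$ requires \dots carefully bookkeeping through each step''). Since the entire assertion is the exact sign of a canonical pairing identity, deferring the sign computation leaves the statement unproved: you cannot distinguish $\b=\tau$ from $\b=-\tau$ without the explicit cocycle-level argument (or some equally explicit normalization). The missing ingredient is precisely the paper's Proposition \ref{7.20}, whose proof takes an explicit chain-level description of the Poitou--Tate pairing for complexes (adapting Milne \S I.4) and evaluates it on representatives for $\delta$ and $b$; that computation is where the theorem actually gets proved.
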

We think that this theorem is very interesting on its own, since it gives an elegant description of the Tate duality pairing. It will proved in the rest of  this section. We will continue to use the notation which we have introduces so far. Let $\mathbf E$ an embedding problem of the type considered in Definition \ref{7.16}. Consider the cohomological long exact sequence:
$$\CD\HH^0(F,\mathbb ZB(\mathbf{E})_*)@>{\deg}>>\HH^0(F,\mathbb Z)
@>\partial>>\HH^1(F,\tau_{>0}(\mathbb ZB(\mathbf{E})_*))\endCD$$
corresponding to the distinguished triangle:
$$\CD\tau_{>0}(\mathbb Z B(\mathbf{E})_*)@>f_0>>\mathbb Z B(\mathbf{E})_*@>{\deg}>>
\mathbb Z@>>>\tau_{>0}(\mathbb Z B(\mathbf{E})_*)[1],
\endCD$$
and set $\delta=\partial(1)\in\Sh^1(F,\tau_{>0}(\mathbb Z B(\mathbf{E})_*))$. 
\begin{lemma}\label{7.19} The image of the classifying element $c_{\mathbf E}\in\Sh^2(F,\textrm{\rm Ker}(\mathbf E))$ under the isomorphism
$$\Sh^2(F,\textrm{\rm Ker}(\mathbf E))\cong\Sh^1(F,\tau_{>0}(\mathbb Z B(\mathbf{E})_*))$$
in Remark \ref{7.14} is the $\delta$ above.
\end{lemma}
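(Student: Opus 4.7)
The plan is to recast the assertion via the octahedral axiom as an identity of first Postnikov $k$-invariants of $\mathbb Z B(\mathbf E)_*$, and then to verify that identity by an explicit cocycle computation.

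I would first apply the octahedral axiom to the composition $\tau_{>1}(\mathbb Z B(\mathbf E)_*) \xrightarrow{f_1} \tau_{>0}(\mathbb Z B(\mathbf E)_*) \xrightarrow{f_0} \mathbb Z B(\mathbf E)_*$. Together with the two distinguished triangles defining $f_0$ and $h_1$, the octahedron produces a further distinguished triangle
$$M[1] \longrightarrow \tau_{\leq 1}(\mathbb Z B(\mathbf E)_*) \longrightarrow \mathbb Z \xrightarrow{k} M[2]$$
whose connecting morphism satisfies $k = h_1[1] \circ \partial$. Applying $F$-hypercohomology to this triangle yields $h_{1,*}(\delta) = h_{1,*}\partial(1) = k$ as an element of $\HH^1(F, M[1]) = H^2(F, M)$. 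The claim therefore reduces to showing that the first Postnikov $k$-invariant $k \in H^2(\Gamma, M)$ of $\mathbb Z B(\mathbf E)_*$ equals $c_{\mathbf E}$.

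To verify $k = c_{\mathbf E}$, I would choose a continuous set-theoretic section $s \colon \Gamma \to \Gamma(\mathbf E)$ of $\pi_{\mathbf E}$ (which exists since $\pi_{\mathbf E}$ is a surjection of profinite groups). The 2-cocycle $f_s(g_1, g_2) = s(g_1) s(g_2) s(g_1 g_2)^{-1} \in M$ represents $c_{\mathbf E}$. Writing $s = (s_1, \mathrm{id}_{\Gamma})$ with $s_1 \colon \Gamma \to G_1$, I would exhibit the 1-cochain $\eta(g) = [\psi(g)] - [1] \in \ker(\deg) \subseteq \mathbb Z B(\mathbf E)_0$ as the standard representative of $\delta = \partial(1)$ produced by the connecting morphism of the triangle $\tau_{>0}(\mathbb Z B(\mathbf E)_*) \to \mathbb Z B(\mathbf E)_* \to \mathbb Z$. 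Lifting $\eta$ along the simplicial boundary by $\xi(g) = [1, s_1(g)] \in \mathbb Z B(\mathbf E)_1$, the 2-cochain $\zeta(g_1, g_2) = g_1 \cdot \xi(g_2) - \xi(g_1 g_2) + \xi(g_1)$ takes values in the 1-cycles of $\mathbb Z B(\mathbf E)_*$, and its class under the Hurewicz isomorphism $H_1(\mathbb Z B(\mathbf E)_*) \cong M$ is $f_s$. Via the chain-level description of the Postnikov truncation $h_1$, this realizes $h_{1,*}(\delta) = c_{\mathbf E}$.

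The main obstacle is the chain-level computation in the second step, where one must verify that the Hurewicz map $H_1(\mathbb Z B(\mathbf E)_*) \to \textrm{Ker}(\mathbf E)_{ab} = M$ sends the class of $\zeta(g_1, g_2)$ precisely to $f_s(g_1, g_2)$; this is a somewhat delicate manipulation of simplicial face maps and the definition of the quotient $\mathbf E_*/\textrm{Ker}(\mathbf E)$. A more conceptual alternative is to invoke the classical Eilenberg--MacLane result that the first Postnikov $k$-invariant of a $\Gamma$-equivariant $K(M,1)$-space equals the class of the extension of fundamental groups of its Borel construction; since $B(\mathbf E)_*$ is such a $\Gamma$-space and the Borel construction yields precisely the extension $1 \to M \to \Gamma(\mathbf E) \to \Gamma \to 1$, this class is $c_{\mathbf E}$, so the identification $k = c_{\mathbf E}$ is immediate.
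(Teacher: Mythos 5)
Your proposal is correct and actually fills a real gap: the paper's own proof of Lemma \ref{7.19} consists solely of the remark that it follows from a direct computation with representing cocycles, with all details left to the reader. Your octahedral-axiom reduction is accurate---applying the octahedron to $\tau_{>1}(\mathbb Z B(\mathbf E)_*) \to \tau_{>0}(\mathbb Z B(\mathbf E)_*) \to \mathbb Z B(\mathbf E)_*$ exhibits $h_{1,*}(\delta)$ as the first Postnikov $k$-invariant $k\in H^2(\Gamma,\textrm{Ker}(\mathbf E))$ of the complex $\mathbb Z B(\mathbf E)_*$, so the lemma becomes the identity $k=c_{\mathbf E}$. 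Your first route, the explicit cocycle chase, is in substance the computation the authors have in mind, and the structure is right; the one point you must watch is the $\Gamma$-action on $\mathbb Z B(\mathbf E)_*$: the diagonal left $G_1$-action on $\mathbf E_n=G_1^{n+1}$ does \emph{not} descend to a $G_2$-action on $B(\mathbf E)_n$ for $n\geq 1$ unless $\textrm{Ker}(\mathbf E)$ is central in $G_1$, so the $\Gamma$-module structure must come from the residual right $G_2$-action; this replaces $[\psi(g)]$ with $[\psi(g)^{-1}]$ in your formula for $\eta$, and likewise changes your lift $\xi(g)$ to $[1,s_1(g)^{-1}]$, and there is an additional sign when the degree-zero-valued cochain representing $\delta$ is traded for a degree-one-valued one. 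Working through those conventions one does indeed land on the cocycle of the extension, but the bookkeeping is delicate, which is presumably why the authors deferred it. Your second route---recognising $k$ as the classical Eilenberg--MacLane obstruction, namely the class of the $\pi_1$-extension of the Borel construction of the $\Gamma$-equivariant $K(\textrm{Ker}(\mathbf E),1)$, which is precisely $1\to\textrm{Ker}(\mathbf E)\to\Gamma(\mathbf E)\to\Gamma\to 1$---is a genuinely different and cleaner argument than the one the paper gestures at; it bypasses the convention-sensitive chain-level manipulations entirely and is the version I would recommend writing up.
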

\begin{proof} This is just a direct computation involving the representing cocycles. The details are left to the reader.
\end{proof}
Let
$$\langle\cdot,\cdot\rangle:\Sh^1(F,\tau_{>0}(\mathbb Z B(\mathbf{E})_*)) \times \Sh^2(F,\tau_{>0}(\mathbb Z B(\mathbf{E})_*)^{\vee})\longrightarrow
\mathbb Q/\mathbb Z $$
be the perfect pairing in Theorem~\ref{l:Sha}. Now let $b\in \Sh^2(F,\tau_{>0}(\mathbb Z B(\mathbf{E})_*)^{\vee})$ be arbitrary. Since $\HH^3(F,\overline F^*)=0$, the map
$$\HH^2(F,\mathbb Z B(\mathbf{E})_*^{\vee})\to \HH^2(F,\tau_{>0}(\mathbb Z B(\mathbf{E})_*)^{\vee})$$
is onto, and thus $b$ can be lifted to an element $\overline{b}\in \HH^2(F,\mathbb Z B(\mathbf{E})_*^{\vee})$, and we can take the localisation map to obtain
$$\overline{b}_x\stackrel{\textrm{def}}{=}\iota^*_x(\overline{b})\in
\HH^2(F_x,\mathbb Z B(\mathbf{E})_*^{\vee}).$$
Recall that we assumed that $\textrm{Sol}_{\mathbb A}(\mathbf E)$ is non-empty, so for every $x\in |F|$ let $h_x$ be a solution of $\mathbf E_x$ such that $h_x$ is unramified for almost all $x$. Let $s_x$ denote the section $s(h_x)$ corresponding to $h_x$ for every $x\in|F|$.  Finally let $\cup$ denote the cup product introduced after Definition \ref{7.15a} (over any field, including all completions of $F$).
\begin{prop}\label{7.20} We have the equality:
$$\langle\delta,b\rangle=-\sum_{x\in|F|}\inv_x([s_x]\cup\overline{b}_x)\in\mathbb Q/\mathbb Z.$$
\end{prop}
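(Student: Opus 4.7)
The plan is to prove the formula by a direct cochain-level computation of the Poitou--Tate pairing from Theorem \ref{l:Sha}, where the classifying elements $[s_x]$ play a double role: they both provide local trivialisations of $\delta$ and contribute via Lemma \ref{7.15} to the cup products appearing on the right hand side. The global cochain $\widetilde 1\cup\overline b_c$ will serve as the primitive needed to turn the classical Cassels--Tate recipe into a well-defined sum of local invariants.

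First I would fix explicit cochain representatives. Since $\deg\colon\mathbb ZB(\mathbf E)_*\to\mathbb Z$ is surjective chain-wise, pick a $0$-cochain $\widetilde 1\in C^0(F,\mathbb ZB(\mathbf E)_*)$ with $\deg\widetilde 1=1$; then $d\widetilde 1\in Z^1(F,\tau_{>0}(\mathbb ZB(\mathbf E)_*))$ is a cocycle representing $\delta=\partial(1)$ by the definition of the connecting map. For $\overline b$, take a $2$-cocycle representative $\overline b_c\in Z^2(F,\mathbb ZB(\mathbf E)_*^\vee)$, which exists because the obstruction to promoting any $2$-cochain lift to a cocycle lift lies in $\HH^3(F,\overline F^*)=0$; its image $b_c\in Z^2(F,\tau_{>0}(\mathbb ZB(\mathbf E)_*)^\vee)$ represents $b$. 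Locally, choose a $0$-cocycle $\widetilde s_x\in Z^0(F_x,\mathbb ZB(\mathbf E)_*)$ representing $[s_x]$ with $\deg\widetilde s_x=1$; then $\eta_x:=\widetilde 1|_{F_x}-\widetilde s_x\in C^0(F_x,\tau_{>0})$ satisfies $d\eta_x=d\widetilde 1|_{F_x}$, trivialising $\delta|_{F_x}$.

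Next I would construct the global primitive and assemble the Cassels--Tate cocycle. Set $\Psi:=\widetilde 1\cup\overline b_c\in C^2(F,\overline F^*)$, using the tautological pairing $\mathbb ZB(\mathbf E)_*\otimes\mathbb ZB(\mathbf E)_*^\vee\to\overline F^*$. Since $\overline b_c$ is a cocycle, $d\Psi=d\widetilde 1\cup\overline b_c=d\widetilde 1\cup b_c$, where the last equality uses that $d\widetilde 1$ is valued in $\tau_{>0}$, so that its pairing with the $\overline F^*$-summand of $\mathbb ZB(\mathbf E)_*^\vee$ is zero ($\tau_{>0}$ has no degree-zero part). Thus $\Psi$ is a global primitive of the $3$-cocycle $d\widetilde 1\cup b_c$, which is precisely the data needed to make the Poitou--Tate pairing concrete once $\HH^3(F,\overline F^*)=0$. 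The Cassels--Tate recipe for $\langle\delta,b\rangle$ then produces the local $2$-cocycle $W_x:=\Psi|_{F_x}-\eta_x\cup\overline b_c|_{F_x}\in Z^2(F_x,\overline F_x^*)$ (its being a cocycle is a direct check using $d\eta_x=d\widetilde 1|_{F_x}$ together with $\eta_x\in\tau_{>0}$), and the pairing is $\langle\delta,b\rangle=-\sum_x\inv_x(W_x)$.

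Finally, algebraic simplification yields the claim: $W_x=(\widetilde 1|_{F_x}-\eta_x)\cup\overline b_c|_{F_x}=\widetilde s_x\cup\overline b_c|_{F_x}$, which represents $[s_x]\cup\overline b_x\in\HH^2(F_x,\overline F_x^*)$ by Lemma \ref{7.15} applied to $F_x$ and the section $s_x$. Summing local invariants gives $\langle\delta,b\rangle=-\sum_x\inv_x([s_x]\cup\overline b_x)$ as required. The main obstacle will be pinning down the signs: verifying that the chain-level recipe above truly computes the Poitou--Tate pairing for complexes in the sense of Theorem \ref{l:Sha}, and that its sign convention is compatible with that of the cup product in Lemma \ref{7.15}. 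This will require a careful trace through the explicit $\mathcal Z^n$-level formulas introduced before Lemma \ref{adjointness}, together with the check that the construction specialises to the classical Cassels--Tate pairing for discrete $\Gamma$-modules in the expected way.
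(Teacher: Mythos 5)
Your proposal is essentially the paper's proof: both fix a degree-one $0$-cochain $\widetilde 1$ (your $\widetilde 1$, the paper's $g$) so that $d\widetilde 1$ represents $\delta$, lift $b$ to a cocycle $\overline b_c$ for $\mathbb ZB(\mathbf E)_*^\vee$ using $\HH^3(F,\overline F^*)=0$, form the local differences $\eta_x$ (the paper's $h_x=g_x-f_x$) landing in $\tau_{>0}$, use $\Psi=\widetilde 1\cup\overline b_c$ (the paper's $\epsilon$) as the global primitive, and invoke a Milne/Cassels--Tate-type explicit chain-level formula for the Poitou--Tate pairing of complexes before collapsing $W_x$ to $\pm[s_x]\cup\overline b_x$ by the same algebraic cancellation. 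The only difference is cosmetic (your $W_x$ is $-c_x$ with the compensating sign in the recipe, and you work with generic cochain groups where the paper spells out the bar-resolution $\underline{\Hom}^*_\Gamma(E(\Gamma/U),\cdot)$ formalism); both treatments, honestly, take the explicit local-invariant formula for the pairing on $\Sh^1\times\Sh^2$ for complexes as given.
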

By Lemma \ref{7.15} the right hand side is $-\b_{\mathbf E}(b)$. On the other hand the isomorphisms:
$$\Sh^1(F,\textrm{\rm Ker}(\mathbf E)^{\vee})\cong\Sh^2(F,\tau_{>0}(\mathbb Z B(\mathbf{E})_*)^{\vee})
,
\Sh^2(F,\textrm{\rm Ker}(\mathbf E))\cong\Sh^1(F,\tau_{>0}(\mathbb Z B(\mathbf{E})_*))$$
in Corollary \ref{7.13} and Remark \ref{7.14} respect the pairing in the sense that the resulting diagram:
$$\xymatrix{
\Sh^1(F,\textrm{\rm Ker}(\mathbf E)^{\vee})
\ar@<-8ex>[d]\times \Sh^2(F,\textrm{\rm Ker}(\mathbf E))\ar@<9ex>[d]
  \ar[r]^-{\langle\cdot,\cdot\rangle} & \mathbb Q/\mathbb Z
\ar@{=}[d]  \\
\Sh^2(F,\tau_{>0}(\mathbb Z B(\mathbf{E})_*^{\vee})
 \times \Sh^1(F,\tau_{>0}(\mathbb Z B(\mathbf{E})_*))
\ar[r]^-{\langle\cdot,\cdot\rangle}  & \mathbb Q/\mathbb Z}$$
is commutative. Therefore by Lemma \ref{7.19} the left hand side is $\tau(c_{\mathbf E},b)$. So Theorem \ref{pairing_theorem} follows from Proposition \ref{7.20}, and hence we only have to prove the latter.
\begin{defn} For every pro-finite group $\Delta$ and open normal subgroup $U\leq\Delta$ let $E(\Delta/U)$ denote the standard (bar) resolution complex by free $\mathbb{Z}[\Delta/U]$-modules of $\mathbb{Z}$, equipped with the tautological $\Delta$-action. Note that since $E(\Delta/U)$ is quasi-isomorphic to $\mathbb{Z}$, any map of degree $i$ between $E(\Delta/U)$ and another complex $C$ in $\mathcal C(\Delta)$ gives rise to a hypercohomology class in $\mathbb{H}^i(\Delta,C)$. For every $g\in\mathcal Z^i(E(\Delta/U),C)$ let $[g]$ denote the class in $\mathbb{H}^i(\Delta,C)$ represented by $g$. 
\end{defn}
\begin{notn} Now let $\Delta'$ be another pro-finite group, let $U'\leq\Delta'$ be an open normal subgroup, and let $\phi:\Delta'\to\Delta$ be a continuous homomorphism such that $\phi(U')\subseteq U$. Then $\phi$ induces a homomorphism $\Delta'/U'\to\Delta/U$, which induces a map $E(\Delta'/U')\to E(\Delta/U)$ of complexes, which furnishes a homomorphism
$$\phi^*:\underline{\Hom}^i_{\Delta}(E(\Delta/U),C)\to\underline{\Hom}^i_{\Delta'}(E(\Delta'/U'),C)$$
compatible with the pull-back map on cohomology. We will drop
$\phi^*$ from the notation when $\phi$ is the identity map on
$\Delta$.
\end{notn}
\begin{defn} Now let $A=\{A_n\}_{n\in\mathbb Z},B=\{B_n\}_{n\in\mathbb Z}$ and $C=\{C_n\}_{n\in\mathbb Z}$ be three complexes in $\mathcal C(\Delta)$ such that there is a pairing:
$$m:A\otimes B\longrightarrow C.$$
Let $U$ and $E(\Delta/U)$ be as above, and let 
$$c:E(\Delta/U)\longrightarrow E(\Delta/U)\otimes E(\Delta/U)$$
denote the Alexander--Whitney map (see formula (1.4) of \cite{Bro} on page 108). Now consider the composition:
$$\xymatrix@C=2cm{
\underline{\Hom}^*_{\Delta}(E(\Delta/U),A)\times
\underline{\Hom}^*_{\Delta}(E(\Delta/U),B)\ar[d] \\
\underline{\Hom}^*_{\Delta}(E(\Delta/U)\otimes E(\Delta/U),A
\otimes B)\ar[d] \\
\underline{\Hom}^*_{\Delta}(E(\Delta/U),A
\otimes B)\ar[d] \\
\underline{\Hom}^*_{\Delta}(E(\Delta/U),C),}$$
where the first map is furnished by the functorial property of tensor products, the second is induced by the co-multiplication $c$, and the third map is induced by the multiplication $m$. Let $\cup$ denote the resulting pairing of complexes: 
$$\underline{\Hom}^*_{\Delta}(E(\Delta/U),A)\times
\underline{\Hom}^*_{\Delta}(E(\Delta/U),B)\longrightarrow
\underline{\Hom}^*_{\Delta}(E(\Delta/U),C).$$
Note the induced map on the cohomology is the exterior cup product. 
\end{defn}
\begin{proof}[Proof of Proposition \ref{7.20}] In order to prove the statement we shall use an explicit description of the pairing
$$\langle\cdot,\cdot\rangle:\Sh^1(F,\tau_{>0}(\mathbb Z B(\mathbf{E})_*))\times
\Sh^2(F,\tau_{>0}(\mathbb Z B(\mathbf{E})_*^{\vee})
\longrightarrow\mathbb Q/\mathbb Z$$
similar to the one given by Milne in \S I.4 of ~\cite{Mi}. Let $c\in\mathbb Z B(\mathbf{E})_0$ be such that $\deg(c)=1$. Denote by $U\triangleleft\Gamma$ the stabiliser of $c$. Let $g\in 
\underline{\Hom}^0_{\Gamma}(E(\Gamma/U),\mathbb Z B(\mathbf{E})_*)$ be such that $g_0(\sig) = \sig c$ for $\sig \in \Gamma/U$ and $g_i = 0$ for $i \neq 0$. Note that
$[\deg\circ g]$ represents $1 \in\HH^0(F,\mathbb Z)$ and so
$ \alpha =dg\in\mathcal Z^1(E(\Gamma/U),\tau_{>0}(\mathbb Z B(\mathbf{E})_*))$ represents $\delta=\partial(1)\in\HH^1(F,\tau_{>0}(\mathbb Z B(\mathbf{E})_*))$. Shrink $U$ enough so that one can represent $b$ by a map $\beta\in \mathcal Z^2\left(E(\Gamma/U),\tau_{>0}(\mathbb Z B(\mathbf{E})_*)^{\vee}\right)$, and $\overline b$ by
$\overline\beta\in\mathcal Z^2(E(\Gamma/U),
\mathbb Z B(\mathbf{E})_*^{\vee})$.
Now set
$$ \epsilon = g \cup\overline\beta\in
\underline{\Hom}^2_{\Gamma}(E(\Gamma/U),\overline F^*).$$
Note that $d\epsilon=dg\cup\overline\beta=\alpha\cup\overline\beta$. Set $g_x=\iota_x^*(g)$ for every $x\in|F|$. For every place $x$ we can take a small enough $U_x\triangleleft\Gamma_x$ such that we can represent $g_x$ in $\underline{\Hom}^0_{\Gamma}
(E(\Gamma_x/U_x),\mathbb Z B(\mathbf{E})_*)$ (that is, we have $U_x\subseteq\Gamma_x\cap U$), and we can represent $[s_x]\in\HH^0(F_x,\mathbb Z B(\mathbf{E})_*)$ by an $f_x\in \mathcal Z^0(E(\Gamma_x/U_x),\mathbb Z B(\mathbf{E})_*)$. Denote $h_x = g_x-f_x$. Then
$$dh_x = dg_x-df_x = \alpha_x $$
where $\alpha_x=\iota_x^*(\alpha)$ (for every $x\in|F|$). Since $\deg(h_x)=0$ we see that $h_x$ actually lies in
$\underline{\Hom}^0_{\Gamma}
(E(\Gamma_x/U_x),\tau_{>0}(\mathbb Z B(\mathbf{E})_*))$. Hence we can cup it with $\beta_x=\iota_x^*(\beta)\in\mathcal Z^1(E(\Gamma_x/U_x),\tau_{>0}(\mathbb Z B(\mathbf{E})_*)^{\vee})$ and get an element in
$\underline{\Hom}^1_{\Gamma}(E(\Gamma_x/U_x),
\overline F^*_x)$. We then observe that
$$ d(h_x\cup\beta_x) = dh_x \cup\beta_x= \alp_x \cup\beta_x
= d\epsilon_x,$$
where $\epsilon_x=\iota_x^*(\epsilon)$ (for every $x\in|F|$), and so we can define
$$ c_x = [h_x \cup\beta_x-\epsilon_x] \in \HH^2(F_x,
\overline F^*_x).$$
Our generalisation for Milne's formula is the following expression for the pairing:
$$\langle\delta,b\rangle= \sum_{x\in|F|}\inv_x(c_x)\in
\mathbb Q/\mathbb Z.$$
Now by naturality
$$h_x \cup\beta_x=h_x \cup\overline\beta_x,$$
where the first cup is computed in $\tau_{>0}(\mathbb Z B(\mathbf{E})_*),\tau_{>0}(\mathbb Z B(\mathbf{E})_*)^{\vee}$ and the second in $\mathbb Z B(\mathbf{E})_*,\mathbb Z B(\mathbf{E})_*^{\vee}$. We then get
$$ c_x = [h_x \cup\beta_x-\epsilon_x] = [h_x\cup\beta_x-
g_x\cup\overline\beta_x] = [h_x \cup\overline\beta_x-g_x\cup\overline\beta_x] = [-f_x \cup\overline\beta_x] = -[s_x]
\cup\overline b_x,$$
because $\overline b_x=[\overline\beta_x]$ (for every $x\in|F|$).
\end{proof}

\section{Proof of the main results}

\begin{proof}[Proof of Theorem \ref{1.1}] The implications:
$$\textrm{\rm Sol}(\mathbf E)\neq0\Rightarrow
\textrm{\rm Sol}_{\mathbb A}^{\textrm{\rm Br}}(\mathbf E)\neq0
\Rightarrow
\textrm{\rm Sol}_{\mathbb A}^{\textrm{\rm Br}_1}(\mathbf E)\neq0
\Rightarrow
\textrm{\rm Sol}_{\mathbb A}^{\Bi}(\mathbf E)\neq0$$
are trivially true. Assume now that $\textrm{\rm Sol}_{\mathbb A}^{\Bi}(\mathbf E)\neq0$. Then the homomorphism:
$$\b_{\mathbf E}:\Sh^1(\textrm{Ker}(\mathbf E)^{\vee})
\rightarrow\mathbb Z/\mathbb Q$$
is zero by definition. Hence the cohomology class $c_{\mathbf E}\in
H^2(\Gamma,\textrm{Ker}(\mathbf E))$ of the extension (\ref{1.3.1}) is annihilated by the pairing $\b$ of (\ref{1.3.2}). By part $(a)$ of Theorem 4.10 of \cite{Mi} on page 70 the pairing $\tau$ is perfect. Therefore the pairing $\b$ is also perfect by Theorem \ref{1.4}. The claim is now clear.
\end{proof}
\begin{proof}[Proof of Theorem \ref{1.2}] Because of the inclusions:
$$r(\textrm{Sol}(\mathbf E))\subseteq\textrm{Sol}_{\mathbb A}^{\textrm{Br}}(\mathbf E)\subseteq\textrm{Sol}_{\mathbb A}^{\textrm{Br}_1}(\mathbf E)$$
we only have to show that $\textrm{Sol}_{\mathbb A}^{\textrm{Br}_1}(\mathbf E)
\subseteq r(\textrm{Sol}(\mathbf E))$. This claim is trivial when $\textrm{Sol}_{\mathbb A}^{\textrm{Br}_1}(\mathbf E)$ is empty. Assume now that
$\textrm{Sol}_{\mathbb A}^{\textrm{Br}_1}(\mathbf E)\neq\emptyset$. Then by Theorem \ref{1.1} the set $\textrm{Sol}(\mathbf E)$ is non-empty. Therefore the short exact sequence (\ref{1.3.1}) splits. Choose such a splitting $h\in\textrm{Sol}(\mathbf E)$; such a choice furnishes a natural bijection:
$$\alpha:\textrm{Sol}(\mathbf E)\longrightarrow H^1(\Gamma,\textrm{Ker}(\mathbf E))$$
between the sections of the short exact sequence (\ref{1.3.1}) and the cohomology group $H^1(\Gamma,\textrm{Ker}(\mathbf E))$ mapping $h$ to zero. Moreover the splitting above induces a splitting of the short exact sequence:
\begin{equation}\label{8.0.1}
\CD1@>>>\textrm{Ker}(\mathbf E)@>i_{\mathbf E_x}>>\Gamma_x(\mathbf E_x)@>\pi_{\mathbf E_x}>>\Gamma_x@>>>1\endCD
\end{equation}
for every $x\in|F|$ which in turn furnishes a natural bijection:
$$\alpha_x:\textrm{Sol}(\mathbf E_x)\longrightarrow H^1(\Gamma_x,\textrm{Ker}(\mathbf E))$$
mapping $r_x(h)$ to zero. Moreover for every $x\in|F|$ where $\textrm{Ker}(\mathbf E)$ is unramified we have $\alpha_x(\textrm{Sol}_{un}(\mathbf E_x))=H^1_{un}(\Gamma_x,\textrm{Ker}(\mathbf E))$ hence there is a commutative diagram
\begin{equation}\label{8.0.2}
\CD\textrm{Sol}(\mathbf E)
@>\alpha>>H^1(\Gamma,\textrm{Ker}(\mathbf E))\\
@V{r}VV@VV\prod_{x\in|F|}\iota_x^*V\\
\textrm{Sol}_{\mathbb A}(\mathbf E)
@>\alpha_{\mathbb A}>>
H^1_{\mathbb A}(\Gamma,\textrm{Ker}(\mathbf E))\endCD
\end{equation}
where
$$\alpha_{\mathbb A}=\prod_{x\in|F|}\alpha_x|_{\textrm{Sol}_{\mathbb A}(\mathbf E)}.$$
Because the vertical maps in the diagram (\ref{8.0.2}) are bijections the claim follows from Theorem \ref{5.4} and the proposition below.
\end{proof}
\begin{prop} For every $g\in\textrm{\rm Sol}_{\mathbb A}(\mathbf E)$ and for every $c\in H^1(\Gamma,\textrm{\rm Ker}(\mathbf E)^{\vee})$ we have:
$$\langle g,j_{\mathbf E}(c)\rangle=[\alpha_{\mathbb A}(g),
\prod_{x\in|F|}\iota^*_x(c)].$$
\end{prop}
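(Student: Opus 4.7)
The plan is to reduce the identity to a term-by-term equality indexed by the places of $F$, obtained by applying Corollary \ref{6.5} to the local extension (\ref{8.0.1}) at each place, and then to cancel one of the local summands globally by invoking Lemma \ref{3.6} for the chosen global solution $h$.

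First, I would lift $j_{\mathbf E}(c)\in\textrm{Br}(\mathbf E)$ to a class $\tilde c\in H^2(\Gamma(\mathbf E),\overline F^*)$. By the construction of $j_{\mathbf E}$ in Lemma \ref{2.2} from the Hochschild-Serre spectral sequence (\ref{2.2.1}), such a lift can be chosen inside the filtration step $E^2_1$; equivalently, the restriction of $\tilde c$ to $H^2(\textrm{Ker}(\mathbf E),\overline F^*)$ is zero and the higher edge homomorphism sends $\tilde c$ to $c$. For each $x\in|F|$ set $\tilde c_x=(\eta_x)_*((\textrm{id}_{G_1}\times\iota_x)^*(\tilde c))$. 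By naturality, $\tilde c_x$ lies in the kernel $H^2(\Gamma_x(\mathbf E_x),\overline F_x^*)_0$ of the restriction to $H^2(\textrm{Ker}(\mathbf E),\overline F_x^*)$ appearing in Definition \ref{6.4}, and the higher edge homomorphism $\delta$ of the local Hochschild-Serre spectral sequence satisfies $\delta(\tilde c_x)=\iota_x^*(c)$.

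Next, fix the reference splitting $s(h)$ coming from the global solution $h\in\textrm{Sol}(\mathbf E)$, and write $t_x=s(\iota_x\circ h)=s(r_x(h))$ and $s_x=s(g_x)$ for the two sections of (\ref{8.0.1}). By the construction of $\alpha_x$ in the proof of Theorem \ref{1.2}, the cohomology class $\alpha_x(g_x)\in H^1(\Gamma_x,\textrm{Ker}(\mathbf E))$ is represented by the cocycle $\gamma\mapsto s_x(\gamma)\,t_x(\gamma)^{-1}$, which is exactly the class $[s_x-t_x]$ in the sense of Definition \ref{6.4}. Applying Corollary \ref{6.5} to the extension (\ref{8.0.1}), to $\tilde c_x$, and to the sections $s_x,t_x$, I obtain in $H^2(\Gamma_x,\overline F_x^*)$ the identity
$$s_x^*(\tilde c_x)-t_x^*(\tilde c_x)=[s_x-t_x]\cup\delta(\tilde c_x)=\alpha_x(g_x)\cup\iota_x^*(c).$$
Since both the cup product of Definition \ref{6.4} and the pairing $\{\cdot,\cdot\}$ are induced by the evaluation map $\textrm{Ker}(\mathbf E)\otimes\textrm{Ker}(\mathbf E)^{\vee}\to\overline F_x^*$, taking $\textrm{inv}_x$ of the right-hand side yields $\{\alpha_x(g_x),\iota_x^*(c)\}$.

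Finally, I would apply $\textrm{inv}_x$ to both sides and sum over $x\in|F|$. The sum $\sum_x\textrm{inv}_x(s_x^*(\tilde c_x))$ is $\langle g,j_{\mathbf E}(c)\rangle$ by the very definition of the Brauer-Manin pairing (Definition \ref{3.5}), while $\sum_x\textrm{inv}_x(t_x^*(\tilde c_x))$ equals $(h,\tilde c)$ in the notation of Section 3, which vanishes by Lemma \ref{3.6} applied to the global solution $h$ and $\tilde c\in H^2(\Gamma(\mathbf E),\overline F^*)$. The sum of the right-hand sides is $[\alpha_{\mathbb A}(g),\prod_x\iota_x^*(c)]$ by definition, so the identity follows. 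The only delicate point is making sure that the cup product produced by Corollary \ref{6.5} (a Postnikov-style construction) is identified with the pairing $\{\cdot,\cdot\}$ of local class field theory; but this is formal once one observes that both are induced from the same evaluation pairing, and hence no additional computation is needed.
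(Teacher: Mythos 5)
Your proof is correct and follows essentially the same route as the paper's: lift $j_{\mathbf E}(c)$ to $\tilde c$ in the first filtration step of (\ref{2.2.1}), compare $s(g_x)^*$ with $s(r_x(h))^*$ locally via Corollary \ref{6.5} applied to (\ref{8.0.1}), identify the resulting cup product with $\{\cdot,\cdot\}$ through the common evaluation pairing, sum the invariants, and kill the $h$-term with Lemma \ref{3.6}. The paper presents this as a terse four-line chain of equalities; your version spells out the bookkeeping that is left implicit there (why $\tilde c$ can be chosen with trivial restriction to $H^2(\textrm{Ker}(\mathbf E),\overline F^*)$, why the local higher edge homomorphism sends $\tilde c_x$ to $\iota_x^*(c)$, and why $\alpha_x(g_x)$ is precisely $[s(g_x)-s(r_x(h))]$ in the sense of Definition \ref{6.4}), which is a genuine improvement in readability even though the mathematical content is identical.
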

\begin{proof} Let $g=\prod_{x\in|F|}g_x$. Then we have:
\begin{align}
\langle g,j_{\mathbf E}(c)\rangle=&\sum_{x\in|F|}\textrm{\rm inv}_x(s(g_x)^*(j_{\mathbf E_x}^*(\iota^*_x(c))))\nonumber\\
=&\sum_{x\in|F|}\textrm{\rm inv}_x(s(g_x)^*(j_{\mathbf E_x}^*(\iota^*_x(c)))-s(h_x)^*(j_{\mathbf E_x}^*(\iota^*_x(c))))
+\langle h,j_{\mathbf E}(c)\rangle\nonumber\\
=&\sum_{x\in|F|}\textrm{\rm inv}_x(\{\alpha_x(g_x),\iota^*_x(c)\})+\langle h,j_{\mathbf E}(c)\rangle\nonumber\\
=&\ [\alpha_{\mathbb A}(g),\prod_{x\in|F|}\iota^*_x(c)]\nonumber
\end{align}
where the third equation follows from Corollary \ref{6.5} applied to the short exact sequence (\ref{8.0.1}) for every $x\in|F|$ and the fourth equation is a consequence of Lemma \ref{3.6}.
\end{proof}

\section{A geometric construction}

\begin{notn}\label{9.1} For every scheme $V$ defined over a field $F$ let $\overline V$ denote the base change of $V$ to $\overline F$. In this paper by the cohomology of a smooth group scheme $G$ over a base scheme $V$ we mean the cohomology of the sheaf it represents for the \'etale topology on $V$. For every geometrically connected variety $V$ over a field $F$ let $\pi_1(V)$ denote the isomorphism class of the \'etale fundamental group of $V$ with respect to any geometric point as a base point. 
\end{notn}
\begin{defn}\label{9.2} Let $V$ be a geometrically connected variety defined over $F$. Let $\eta$ be a $\overline F$-valued point of $V$. Then Grothendieck's short exact sequence of \'etale fundamental groups for $V$ is:
\begin{equation}\label{fundamentalsequence}
\CD1@>>>\pi_1(\overline V,\eta)@>>>
\pi_1(V,\eta)@>>>\Gamma@>>>1,\endCD
\end{equation}
which is an exact sequence of profinite groups in the category of topological groups. Every $F$-rational point $x\in V(F)$ induces a section $\Gamma\rightarrow\pi_1(V,\eta)$ of the sequence (\ref{fundamentalsequence}), well-defined up to conjugation by $\pi_1(\overline V,\eta)$. Let $\textrm{Sec}(V/F)$ denote the set of conjugacy classes of sections of (\ref{fundamentalsequence}) (in the category of profinite groups where morphisms are continuous homomorphisms). Then we have a map:
$$s_{V/F}:V(F)\longrightarrow\textrm{Sec}(V/F)$$
which sends every point $x\in V(F)$ to the corresponding conjugacy class of sections. This map is called the {\it section map}. Note that for a different base point $\eta'\in X(\overline F)$ there is a canonical identification between the corresponding section maps, so it is justified to suppress them from the notation.
\end{defn}
\begin{defn}\label{9.3} Let $\mathbf E$ be an embedding problem over the field $F$ given by the diagram (\ref{1.0.1}). Equip  $G_2$ with the trivial $\Gamma$-action. Then 
$$H^1(F,G_2)=\textrm{Hom}(\Gamma,G_2)/\sim,$$
where $\sim$ is the conjugacy relation. Let $[\psi]\in H^1(F,G_2)$ be the class 
corresponding to the homorphism $\psi\in\textrm{Hom}(\Gamma,G_2)$ defining $\mathbf E$. Corresponding to the cohomology class $[\psi]$ there is a right $G_2$-torsor over $F$ which we will denote by $T(\mathbf E)$. Consider $G_1$ as a subgroup of $G(\mathbf E)=SL_{|G_1|+1}$ over $F$
via its augmented regular representation (the regular representation with an additional dimension to fix the determinant). Let $X(\mathbf E)$ be quotient of 
$G(\mathbf E)\times T(\mathbf E)$ via the diagonal action of $G_1$ on the right, where we let $G_1$ act on $T(\mathbf E)$ via the homomorphism $\phi$. We will call $X(\mathbf E)$ the {\it classifying space} of the embedding problem $\mathbf E$. The left action of $G(\mathbf E)$ on the first factor of the product $G(\mathbf E)\times T(\mathbf E)$ descends uniquely to $X(\mathbf E)$ making the quotient map $G(\mathbf E)\times T(\mathbf E)\to X(\mathbf E)$ into a $G(\mathbf E)$-equivariant morphism.  
\end{defn}
\begin{lemma}\label{9.4} The following holds:
\begin{enumerate}
\item[$(i)$] with respect to the action defined above $X(\mathbf E)$ is a homogeneous space over $G(\mathbf E)$, and the geometric stabiliser of $X(\mathbf E)$ is $\textrm{\rm Ker}(\mathbf E)$,
\item[$(ii)$] $\pi_1(\overline{X(\mathbf E)})\cong\textrm{\rm Ker}(\mathbf E)$,
\item[$(iii)$] Grothendieck's short exact sequence of \'etale fundamental groups for $V=X(\mathbf E)$ is:
$$\CD1@>>>\textrm{\rm Ker}(\mathbf E)@>>>
\Gamma(\mathbf E)@>>>\Gamma@>>>1,\endCD$$
the short exact sequence {\rm (\ref{1.3.1})} associated to the embedding problem $\mathbf E$.
\end{enumerate}
\end{lemma}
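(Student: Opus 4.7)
The plan is to address the three claims in order, since each builds on the previous.

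For $(i)$: Since $G_1$ embeds into $G(\mathbf E)=SL_{|G_1|+1}$ via the augmented regular representation, its right action on $G(\mathbf E)$ is free, so the diagonal right action of $G_1$ on $G(\mathbf E)\times T(\mathbf E)$ is free and the quotient $X(\mathbf E)$ is smooth. Left multiplication by $G(\mathbf E)$ on the first factor commutes with the $G_1$-action and so descends to $X(\mathbf E)$. Over $\overline F$ the torsor $T(\mathbf E)$ trivialises to $G_2$; for any class $[(g,h)]\in\overline{X(\mathbf E)}$, surjectivity of $\phi$ lets me choose $g_1\in G_1$ with $\phi(g_1)=h^{-1}$, yielding $[(g,h)]=[(gg_1,e)]$, and two representatives $[(g,e)],[(g',e)]$ coincide exactly when $g^{-1}g'\in\textrm{Ker}(\mathbf E)$. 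This identifies $\overline{X(\mathbf E)}\cong G(\mathbf E)/\textrm{Ker}(\mathbf E)$, whence transitivity of $G(\mathbf E)$ and the identification of the geometric stabiliser of $[(e,e)]$ with $\textrm{Ker}(\mathbf E)$ are immediate.

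Part $(ii)$ is then a direct consequence: $SL_{|G_1|+1}$ is simply connected as an algebraic group in any characteristic, so the quotient map $G(\mathbf E)\to G(\mathbf E)/\textrm{Ker}(\mathbf E)\cong\overline{X(\mathbf E)}$ is the \'etale universal cover, with deck transformation group $\textrm{Ker}(\mathbf E)$, giving $\pi_1(\overline{X(\mathbf E)})\cong\textrm{Ker}(\mathbf E)$.

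For $(iii)$ I would construct the desired isomorphism directly from the natural $G_1$-torsor $q\colon G(\mathbf E)\times T(\mathbf E)\to X(\mathbf E)$. This torsor induces a continuous homomorphism $\rho\colon\pi_1(X(\mathbf E),\bar x_0)\to G_1$ (well defined up to $G_1$-conjugation), and combining it with the Grothendieck projection $\pi\colon\pi_1(X(\mathbf E))\to\Gamma$ yields a homomorphism to $G_1\times\Gamma$. To verify that it lands in $\Gamma(\mathbf E)=G_1\times_{G_2}\Gamma$, I need $\phi\circ\rho=\psi\circ\pi$, which is equivalent to showing that the $\phi$-pushout of $q$ along $G_1\to G_2$ agrees with the pullback to $X(\mathbf E)$ of the $G_2$-torsor $T(\mathbf E)\to\textrm{Spec}(F)$; this is witnessed by the $G_1$-equivariant projection $G(\mathbf E)\times T(\mathbf E)\to T(\mathbf E)$. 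The resulting map $\pi_1(X(\mathbf E))\to\Gamma(\mathbf E)$ covers the identity on $\Gamma$, and its restriction to $\pi_1(\overline{X(\mathbf E)})\cong\textrm{Ker}(\mathbf E)$ is the natural inclusion $\textrm{Ker}(\mathbf E)\hookrightarrow G_1$, because over $\overline F$ the torsor $q$ decomposes as $|G_2|$ connected components, each isomorphic to the $\textrm{Ker}(\mathbf E)$-Galois cover $G(\mathbf E)\to G(\mathbf E)/\textrm{Ker}(\mathbf E)$ from $(ii)$. The five lemma for short exact sequences of profinite groups then completes the identification.

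The main obstacle I anticipate is the careful bookkeeping in $(iii)$: keeping straight left versus right actions when identifying the $\phi$-pushout with the pullback of $T(\mathbf E)$, and tracing through the geometric decomposition of $q$ over $\overline F$ to confirm that $\rho$ restricts to the inclusion $\textrm{Ker}(\mathbf E)\hookrightarrow G_1$ rather than to a twist of it. Everything else reduces to the formal properties of Grothendieck's sequence and the simple-connectedness of $SL_{|G_1|+1}$.
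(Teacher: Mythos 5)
Your proposal is correct and follows essentially the same path as the paper's own proof: identify $\overline{X(\mathbf E)}\cong\overline{G(\mathbf E)}/\textrm{Ker}(\mathbf E)$ for $(i)$, invoke simple connectedness of $SL_{|G_1|+1}$ for $(ii)$, and for $(iii)$ combine the monodromy map of the $G_1$-torsor $q$ with the Grothendieck projection to land in the fibre product $\Gamma(\mathbf E)$, using the geometric fibre of $q$ to pin down the restriction to $\pi_1(\overline{X(\mathbf E)})$. You simply spell out a few steps (the commutativity $\phi\circ\rho=\psi\circ\pi$ via the equivariant projection to $T(\mathbf E)$, the five lemma) that the paper leaves implicit.
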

\begin{proof} Since the homomorphism $\phi$ is surjective, the base change
$\overline{X(\mathbf E)}$ is connected, so the action of $\overline{G(\mathbf E)}$ is transitive, and hence $X(\mathbf E)$ is a homogeneous space over $G(\mathbf E)$. Moreover the geometric stabiliser of $X(\mathbf E)$ is the kernel of $\phi$, so the first claim is clear. Since $G(\mathbf E)$ is simply connected, claim $(ii)$ follows from $(i)$. Now fix an $\overline F$-valued point $\eta$ of $X(\mathbf E)$ and let $\pi_2:\pi_1(X(\mathbf E),\eta)\to
\Gamma$ be the surjection supplied by the short exact sequence (\ref{fundamentalsequence}). Moreover let $\pi_1:\pi_1(X(\mathbf E),\eta)\to
G_1$ be the homomorphism corresponding to the Galois cover $G(\mathbf E)\times T(\mathbf E)\to X(\mathbf E)$. Then $\pi_1\times\pi_2$ is injective by part $(ii)$. Since the composition $\phi\circ\pi_1$ is $\psi\circ\pi_2$ by construction, we get that the image of $\pi_1(X(\mathbf E),\eta)$ with respect to $\pi_1\times\pi_2$ lies in $\Gamma(\mathbf E)$. As the projection
$\pi_2$ is surjective, and its kernel is isomorphic to $\textrm{\rm Ker}(\mathbf E)$, we get that the image of $\pi_1\times\pi_2$ is exactly $\Gamma(\mathbf E)$. Claim $(iii)$ is now clear.
\end{proof}
\begin{defn}\label{9.5} By part $(iii)$ of above $\textrm{Sec}(X(\mathbf E)/F)=\textrm{Sol}(\mathbf E)$. Since $\overline{G(\mathbf E)}$ is simply connected, the section map is constant on the orbits of the action of $G(\mathbf E)$, so $s_{X(\mathbf E)/F}$ furnishes a map:
$$\sigma_{\mathbf E}:X(\mathbf E)(F)/G(\mathbf E)(F)\longrightarrow{\textrm{\rm Sol}}(\mathbf E).$$ 
\end{defn}
\begin{thm}\label{homo_space} The map $\sigma_{\mathbf E}$ is a bijection.
\end{thm}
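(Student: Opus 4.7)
The plan is to reduce both surjectivity and injectivity of $\sigma_{\mathbf{E}}$ to Hilbert 90 for $G(\mathbf{E}) = SL_{|G_1|+1}$, by giving an explicit cocycle description of $X(\mathbf{E})(F)$. First I would choose a geometric trivialization $T(\mathbf{E})(\overline F) \cong G_2$ so that $\Gamma$ acts by $\sigma \cdot t = \psi(\sigma)\cdot t$. The $\overline F$-points of $X(\mathbf{E})$ are then $G_1$-orbits $\overline{(g,t)}$ on $G(\mathbf{E})(\overline F) \times G_2$ under the diagonal action, and such a point is $\Gamma$-invariant precisely when there exists a function $c\colon\Gamma\to G_1$ with $\sigma(g) = g\cdot c(\sigma)$ and $\phi(c(\sigma)) = t^{-1}\psi(\sigma)t$. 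Since $G_1$ is a finite constant group scheme, a short cocycle computation shows $c$ is automatically a continuous homomorphism; after translating the $G_2$-component (which does not change the $G_1$-orbit) we may take $t=e$, so $\phi\circ c = \psi$, i.e.\ $c$ is a solution of $\mathbf{E}$. Unwinding the identification in Lemma \ref{9.4}(iii), this $c$ represents exactly the conjugacy class $\sigma_{\mathbf{E}}(\overline{(g,e)})$.

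Surjectivity then becomes transparent: given $\widetilde{\psi}\in\textrm{Sol}(\mathbf{E})$, the composition $\widetilde{\psi}\colon\Gamma\to G_1\hookrightarrow G(\mathbf{E})(\overline F)$ is a $1$-cocycle valued in $SL_{|G_1|+1}(\overline F)$. By Hilbert 90 it is a coboundary, so there exists $g\in G(\mathbf{E})(\overline F)$ with $\sigma(g) = g\cdot\widetilde{\psi}(\sigma)$, and the orbit $\overline{(g,e)}$ is an $F$-point of $X(\mathbf{E})$ whose image under $\sigma_{\mathbf{E}}$ is $\widetilde{\psi}$. For injectivity, suppose $x_i = \overline{(g_i,e)}$ ($i=1,2$) give cocycles $c_1,c_2$ with $c_2 = b c_1 b^{-1}$ for some $b\in\textrm{Ker}(\mathbf{E})$. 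Setting $\gamma = g_2 b g_1^{-1}$, the cocycle identities $\sigma(g_i)=g_i c_i(\sigma)$ together with the $\Gamma$-triviality of $b$ force $\sigma(\gamma) = \gamma$, so $\gamma\in G(\mathbf{E})(F)$. Since $b\in\textrm{Ker}(\phi)$, the diagonal $G_1$-action then yields $\overline{(\gamma g_1,e)} = \overline{(g_2 b,e)} = \overline{(g_2,\phi(b)^{-1})} = \overline{(g_2,e)} = x_2$, placing $x_1$ and $x_2$ in the same $G(\mathbf{E})(F)$-orbit.

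The main obstacle I anticipate is not the cohomological input — that is a clean application of Hilbert 90 — but the bookkeeping needed to match the cocycle $c$ extracted from a concrete $\overline F$-point of $X(\mathbf{E})$ with the abstract section of Grothendieck's sequence $\pi_1(X(\mathbf{E}))\to\Gamma$ provided by Lemma \ref{9.4}(iii). Getting the two descriptions to agree on the nose (including the left/right, inverse, and base-point conventions inherited from Definitions \ref{9.2} and \ref{9.3}) is where the argument has the most scope for sign errors, and will require tracing through the Galois cover $G(\mathbf{E})\times T(\mathbf{E})\to X(\mathbf{E})$ to verify that its monodromy recovers the projection $\Gamma(\mathbf{E})\to G_1$.
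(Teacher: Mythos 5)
Your proposal is correct, and it takes a genuinely different route from the paper's.  The paper works at the level of covers: for each section $s$ of Grothendieck's sequence it constructs the finite \'etale cover $\pi_s\colon X_s\to X(\mathbf E)$ whose image in $\pi_1$ is $\textrm{Im}(s)$, shows by a pull-back argument that $X_s$ carries a natural $G(\mathbf E)$-action making it a $G(\mathbf E)$-torsor, and then invokes $H^1(F,SL_n)=0$ to conclude $X_s\cong G(\mathbf E)$; surjectivity follows because $X_s(F)\neq\emptyset$, and injectivity because two $F$-points of the trivial torsor lie in one $G(\mathbf E)(F)$-orbit.  You instead unwind everything to the level of cocycles: an $F$-point of $X(\mathbf E)$ is a $\Gamma$-invariant $G_1$-orbit on $G(\mathbf E)(\overline F)\times G_2$, and the invariance condition literally produces a continuous homomorphism $c(\sigma)=g^{-1}\sigma(g)$ with $\phi\circ c=\psi$ once you translate the $G_2$-component to $e$.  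Your surjectivity argument — a solution $\widetilde\psi$ is a $1$-cocycle in $SL_{|G_1|+1}(\overline F)$, so it is a coboundary — is exactly the same Hilbert 90 input the paper uses, but applied directly rather than through the intermediary of the torsor $X_s$.  Your injectivity computation with $\gamma=g_2bg_1^{-1}$ is likewise correct (one checks $\sigma(\gamma)=\gamma$ using $c_2=bc_1b^{-1}$, $\sigma(b)=b$, and the two cocycle identities, and then $\overline{(\gamma g_1,e)}=\overline{(g_2b,e)}=\overline{(g_2,e)}$ because $b\in\ker\phi$), and is more elementary than the paper's appeal to triviality of torsors, at the cost of more explicit bookkeeping.

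The one real gap you yourself flag is the identification of your cocycle $c$ with the abstract section $\sigma_{\mathbf E}(\overline{(g,e)})$ coming from Lemma~\ref{9.4}(iii).  This is genuine work rather than mere bookkeeping: you have to compute the monodromy of $\sigma\in\Gamma$ on the fibre of the $G_1$-Galois cover $G(\mathbf E)\times T(\mathbf E)\to X(\mathbf E)$ over $\overline{(g,e)}$ and match it against the projection $\pi_1(X(\mathbf E))\to G_1$, keeping track of whether the monodromy is $a$ or $a^{-1}$ and whether $G_1$ acts on the left or the right on the fibre.  Getting the sign wrong gives the inverse solution, which is still a solution (and still gives a bijection), but would not literally be $\sigma_{\mathbf E}$.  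The paper sidesteps this by never writing down a cocycle: it works with the section $s$ as a black box, using only the fact that an $F$-rational lift to $X_s$ exists iff $s\in s_{X(\mathbf E)/F}(x)$.  So the paper's approach buys freedom from this convention-matching, while yours buys a completely explicit description of the inverse map $\textrm{Sol}(\mathbf E)\to X(\mathbf E)(F)/G(\mathbf E)(F)$, which is arguably more informative.  Either argument is complete once the respective details are written out.
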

\begin{proof} We may assume that ${\textrm{\rm Sol}}(\mathbf E) \neq \emptyset$ without the loss of generality. Let $s$ be an arbitrary section of Grothendieck's short exact sequence associated to $X(\mathbf E)$. The image Im$(s)$ of $s$ is a closed subgroup of $\pi_1(X(\mathbf E),\eta)$ with finite index, and hence it is an open subgroup, too. Let $\pi_s:X_s\to X(\mathbf E)$ be the finite, \'etale cover corresponding to this open subgroup, that is, for some choice of an $\overline F$-valued base point $\gamma$ of $X_s$ mapping to $\eta$ the image of the homomorphism $\pi_1(\pi_s):\pi_1(X_s,\gamma)\to\pi_1(X(\mathbf E),\eta)$ induced by $\pi_s$ is Im$(s)$. Note that the base change of this cover to $\overline F$ is the universal cover of $\overline{X(\mathbf E)}$, so in particular $\overline X_s$ is isomorphic to $\overline{G(\mathbf E)}$. Consider the following commutative diagram:
$$\xymatrix@C=2cm{
& & X_s\ar^{\pi_s}[d] \\
G(\mathbf E)\times X_s \ar_{\textrm{id}_{G(\mathbf E)}\times\pi_s}[r] 
\ar@{.>}^-{m_s}[urr]
&
G(\mathbf E)\times X(\mathbf E)\ar_{m_{\mathbf E}}[r] & X(\mathbf E),}$$
where $m_{\mathbf E}$ is the action of $G(\mathbf E)$ on $X(\mathbf E)$. Since the product $G(\mathbf E)\times X_s$ is geometrically simply connected, the pull-back of the cover $\pi_s$ with respect to the composition $m_{\mathbf E}\circ(\textrm{id}_{G(\mathbf E)}\times\pi_s)$ is constant. The restriction of the pull-back onto $X_s\cong\{1\}\times X_s$, where $1\in G(\mathbf E)$ is the identity element, is the pull-back of $\pi_s$ with respect to $\pi_s$. Therefore it has a section, namely the diagonal map. Since the pull-back onto $G(\mathbf E)\times X_s$ is constant, this diagonal map has a unique extension to a section on $G(\mathbf E)\times X_s$, which supplies a
map $m_s:G(\mathbf E)\times X_s\to X_s$ making the diagram above commutative and whose restriction onto $X_s\cong\{1\}\times X_s$ is the identity map. 

So in short, we get that the cover $X_s$ is equipped with a right $G(\mathbf E)$-action such that $\pi_s$ is $G(\mathbf E)$-equivariant. Since the base change of this action to $\overline F$ is the usual right action of $\overline{G(\mathbf E)}$ on itself $\overline{G(\mathbf E)}=\overline{X}_s$, we get that $X_s$ is a $G(\mathbf E)$-torsor. Consider the generalised $\Gamma$-equivariant short exact sequence:
$$\CD 1@>>> SL_{|G_1|+1}(\overline F)@>>> GL_{|G_1|+1}(\overline F)
@>>>\overline F^*@>>>1\endCD$$
of pointed sets equipped with a continuous $\Gamma$-action. (Here continuous means that the stabiliser of every point is open in $\Gamma$.) The corresponding cohomological long exact sequence of pointed sets is:
$$\cdots\to GL_{|G_1|+1}(F)\to F^*\to H^1(F,SL_{|G_1|+1}(\overline F))
\to  H^1(F,GL_{|G_1|+1}(\overline F))\to\cdots$$
By Hilbert's theorem 90 for the general linear group the last term is zero. Since the determinant is surjective, we get that the term $H^1(F,SL_{|G_2|+1}(\overline F))=0$ is also zero. Therefore every $G(\mathbf E)$-torsor is trivial, and so $X_s$ is isomorphic to $G(\mathbf E)$. 

Note that $s$ lies in $s_{X(\mathbf E)/F}(x)$ for a rational point $x\in X(\mathbf E)$ if and only if $x$ has an $F$-rational lift to $X_s$ with respect to $\pi_s$. By the above $X_s(F)$ is non-empty for every section $s$, so the map $\sigma_{\mathbf E}$ is surjective. Now let $x,y\in X(\mathbf E)$ be such that $s_{X(\mathbf E)/F}(x)=s_{X(\mathbf E)/F}(y)$. Choose a representative $s$ of this common conjugacy class. As we already noted there are $x',y'\in X_s(F)$ mapping under $\pi_s$ to $x$,$y$, respectively. Since $X_s$ is the trivial $G(\mathbf E)$-torsor, the points $x'$ and $y'$ are in the same $G(\mathbf E)$-orbit. Since $\pi_s$ is $G(\mathbf E)$-equivariant, the same is true for $x$ and $y$. We get that $\sigma_{\mathbf E}$ is injective, too.
\end{proof}
\begin{defn}\label{9.7} Let $V$ be again a geometrically connected smooth $F$-variety. We may calculate the \'etale cohomology groups $H^i(V,\mathbb G_m)$ by considering all hypercoverings of $V$. One can restrict to only those that come from connected Galois coverings of $V$ and thus get natural maps:
$$\rho_V^i:H^i(\pi_1(V),\overline F^*)\longrightarrow
H^i(V,\mathbb G_m),$$
where the action of the pro-finite group $\pi_1(V)$ on $\overline F^*$ is furnished via the projection $\pi_1(V)\rightarrow\Gamma$.
\end{defn}
\begin{lemma}\label{9.8} The map:
$$\rho^i_{\overline{X(\mathbf E)}}:
H^i(\pi_1(\overline{X(\mathbf E)}),\overline F^*)\longrightarrow H^i(\overline{X(\mathbf E)},\mathbb G_m)$$
is an isomorphism for $i=0,1,2$.
\end{lemma}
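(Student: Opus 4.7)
The plan is to exploit the finite \'etale Galois cover $\pi : \overline{G(\mathbf E)} \to \overline{X(\mathbf E)}$ built into the very construction of $X(\mathbf E)$. By Lemma \ref{9.4}$(ii)$ its Galois group is $K := \textrm{Ker}(\mathbf E) \cong \pi_1(\overline{X(\mathbf E)})$, and since $\overline{G(\mathbf E)} = \overline{SL_{|G_1|+1}}$ is simply connected in the \'etale sense, $\pi$ serves as a pro-universal cover of $\overline{X(\mathbf E)}$. In particular, $K$ acts trivially on $\overline F^*$ (its image in $\Gal(\overline F|\overline F) = 1$ is trivial).

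The main tool is the Cartan--Leray (Hochschild--Serre) spectral sequence associated to this $K$-Galois cover:
$$E_2^{p,q} = H^p(K, H^q(\overline{G(\mathbf E)}, \mathbb G_m)) \Longrightarrow H^{p+q}(\overline{X(\mathbf E)}, \mathbb G_m).$$
The crucial input is the vanishing of $H^q(\overline{G(\mathbf E)}, \mathbb G_m)$ for $q = 1, 2$. Indeed, $H^0(\overline{SL_n}, \mathbb G_m) = \overline F^*$ because $\overline{SL_n}$ is geometrically connected and carries only constant invertible functions; $H^1(\overline{SL_n}, \mathbb G_m) = \textrm{Pic}(\overline{SL_n}) = 0$, since simply connected semisimple algebraic groups over an algebraically closed field have trivial Picard group; and $H^2(\overline{SL_n}, \mathbb G_m) = \textrm{Br}(\overline{SL_n}) = 0$, a classical vanishing theorem for the Brauer group of simply connected semisimple algebraic groups over algebraically closed fields.

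Given these inputs the spectral sequence collapses in the range $p + q \le 2$, so its edge maps furnish isomorphisms $H^i(K, \overline F^*) \xrightarrow{\sim} H^i(\overline{X(\mathbf E)}, \mathbb G_m)$ for $i = 0, 1, 2$. It then remains to identify these edge isomorphisms with $\rho^i_{\overline{X(\mathbf E)}}$, which is a matter of unwinding Definition \ref{9.7}: $\rho^i$ is by construction induced by restricting the computation of \'etale cohomology via hypercoverings to those arising from connected finite Galois covers, and since every such cover of $\overline{X(\mathbf E)}$ is dominated by $\pi$, this restriction coincides tautologically with the edge map of the Cartan--Leray spectral sequence. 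The main obstacle, and indeed the only genuinely non-formal ingredient, is the vanishing $\textrm{Br}(\overline{SL_n}) = 0$; once this classical input is cited, everything else is a routine spectral sequence degeneration.
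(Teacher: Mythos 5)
Your proof follows the same route as the paper: you use the $\textrm{Ker}(\mathbf E)$-Galois cover $\overline{G(\mathbf E)}\to\overline{X(\mathbf E)}$ and the associated Hochschild--Serre (Cartan--Leray) spectral sequence, then reduce everything to the vanishing of $H^1(\overline{SL_n},\mathbb G_m)=\textrm{Pic}$ and $H^2(\overline{SL_n},\mathbb G_m)=\textrm{Br}$, identifying $\rho^i_{\overline{X(\mathbf E)}}$ with the edge map. The only cosmetic difference is how the vanishing is cited: you quote the Picard and Brauer group facts directly, while the paper appeals to the $2$-connectedness of the \'etale homotopy type of $\overline{G(\mathbf E)}$; these are equivalent inputs for the present purpose.
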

\begin{proof} By part $(ii)$ of Lemma \ref{9.4} we have
$\pi_1(\overline{X(\mathbf E)})=\textrm{\rm Ker}(\mathbf E)$, and by part $(i)$ we have $\overline{X(\mathbf E)}=\overline{G(\mathbf E)}/\textrm{\rm Ker}(\mathbf E)$. Consider the Hochschild-Serre spectral sequence:
$$H^p(\textrm{\rm Ker}(\mathbf E),H^q(\overline{G(\mathbf E)},\mathbb G_m))
\Rightarrow H^{p+q}(\overline{G(\mathbf E)}/\textrm{\rm Ker}(\mathbf E),\mathbb G_m).$$
Now $H^0(\overline{G(\mathbf E)},\mathbb G_m)=\overline F^*$, and the edge homomorphism:
$$H^i(\textrm{\rm Ker}(\mathbf E),\overline F^*)=
H^i(\textrm{\rm Ker}(\mathbf E),H^0(\overline{G(\mathbf E)},\mathbb G_m))
\longrightarrow H^i(\overline{X(\mathbf E)},\mathbb G_m),$$
of this spectral sequence is $\rho^i_{\overline{X(\mathbf E)}}$. Since $G(\mathbf E)=SL_{|G_1|+1}$, we get that
$$H^1(\overline{G(\mathbf E)},\mathbb G_m)=H^2(\overline{G(\mathbf E)},\mathbb G_m)=0$$
(for example because the \'etale homotopy type of $\overline{G(\mathbf E)}$ is $2$-connected). The claim is now clear.
\end{proof}
\begin{lemma}\label{9.9} The natural map
$$\rho^i_{X(\mathbf E)}:H^i(\pi_1(X(\mathbf E)),\overline F^*)\longrightarrow H^i(X(\mathbf E),\mathbb G_m)$$
is an isomorphism for $i=0,1, 2$.
\end{lemma}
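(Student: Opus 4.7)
The plan is to compare two spectral sequences converging to the relevant cohomology groups. On the \'etale side, I will use the Leray spectral sequence
$$H^p(F,H^q(\overline{X(\mathbf E)},\mathbb G_m))\Rightarrow H^{p+q}(X(\mathbf E),\mathbb G_m)$$
for the structure morphism $X(\mathbf E)\to\Spec F$. On the group cohomology side, I will use the Hochschild-Serre spectral sequence
$$H^p(\Gamma,H^q(\pi_1(\overline{X(\mathbf E)}),\overline F^*))\Rightarrow H^{p+q}(\pi_1(X(\mathbf E)),\overline F^*)$$
associated to Grothendieck's short exact sequence for $V=X(\mathbf E)$, which by Lemma \ref{9.4} $(iii)$ is precisely the sequence (\ref{1.3.1}).

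The second step is to produce a morphism from the second spectral sequence into the first. First I will verify that $\rho^q_{\overline{X(\mathbf E)}}$ is $\Gamma$-equivariant, which follows from naturality with respect to base-change automorphisms of $\overline F/F$. Then, because both spectral sequences can be constructed using the cofinal system of connected Galois covers of $X(\mathbf E)$ inside the system of all hypercoverings, the natural comparison maps $\rho^i_V$ of Definition \ref{9.7} will lift to a morphism of spectral sequences whose action on the $E_2$-pages is $H^p(\Gamma,\rho^q_{\overline{X(\mathbf E)}})$ and whose induced map on the abutments is $\rho^i_{X(\mathbf E)}$.

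By Lemma \ref{9.8}, the map $\rho^q_{\overline{X(\mathbf E)}}$ is an isomorphism for $q=0,1,2$, so the morphism of spectral sequences is an isomorphism on all of rows $q\leq 2$ on the $E_2$-page. Since every differential relevant to the abutments in total degree at most $2$ has both source and target in these rows, this isomorphism propagates to every later page and to $E_\infty$ in this range, and a filtration-by-filtration comparison then gives that $\rho^i_{X(\mathbf E)}$ is an isomorphism for $i=0,1,2$. I expect the main technical obstacle to be the construction of the morphism of spectral sequences itself, and in particular the verification that the comparison map of abutments it induces really is $\rho^i_{X(\mathbf E)}$ as defined in Definition \ref{9.7}; once this naturality diagram is established, the conclusion reduces to Lemma \ref{9.8} by a standard spectral sequence comparison argument.
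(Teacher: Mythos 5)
Your proof takes essentially the same approach as the paper: compare the Hochschild--Serre spectral sequence for the fundamental exact sequence $1\to\textrm{Ker}(\mathbf E)\to\pi_1(X(\mathbf E))\to\Gamma\to 1$ with the Leray spectral sequence for $X(\mathbf E)\to\Spec F$ via a morphism of spectral sequences that is $H^p(\Gamma,\rho^q_{\overline{X(\mathbf E)}})$ on $E_2^{p,q}$, and then conclude from Lemma \ref{9.8} by a comparison argument in total degree $\le 2$. The only difference is stylistic: the paper obtains the morphism of spectral sequences ``from the map of \'etale topoi,'' while you invoke cofinality of connected Galois covers among hypercoverings, but these amount to the same construction.
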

\begin{proof} The short exact sequence
$$\CD 1@>>> \textrm{\rm Ker}(\mathbf E)@>>>
\pi_1(X(\mathbf E)) @>>>\Gamma @>>>1\endCD$$
funishes a Hochschild--Serre spectral sequence:
$$H^p(F,H^q(\textrm{\rm Ker}(\mathbf E),\overline F^*))
\Rightarrow H^{p+q}(\pi_1(X(\mathbf E)),\overline F^*).$$
There is also another Hochschild--Serre spectral sequence:
$$H^p(F,H^q(\overline{X(\mathbf E)},\mathbb G_m))
\Rightarrow H^{p+q}(\overline{G(\mathbf E)}/\textrm{\rm Ker}(\mathbf E),\overline F^*).$$
There is a map from the first spectral sequence to the second (induced by the map of \'etale topoi) such that the corresponding homomorphisms:
$$H^p(\pi_1(X(\mathbf E)),H^q(\textrm{\rm Ker}(\mathbf E),\overline F^*))\longrightarrow H^{p,q}(X(\mathbf E),\mathbb G_m)$$
of the $E_2^{p,q}$ terms are induced by the $\Gamma$-module maps
$\rho^q_{\overline{X(\mathbf E)}}$. Since the latter are isomorphisms for $q<3$ by Lemma \ref{9.8}, the claim follows.
\end{proof}
\begin{notn}\label{9.10} Recall that for any scheme $V$ over a field $F$ the relative Brauer group Br$(V/F)$ is by definition the cokernel of the map
$$\omega_V:H^2(\textrm{Spec}(F),\mathbb G_m)\to H^2(V,\mathbb G_m)$$ induced by the structural morphism $V\to\textrm{Spec}(F)$. When $V=X(\mathbf E)$ then the diagram:
$$\xymatrix{
H^2(\pi_1(X(\mathbf E)),\overline F^*)\ar^{\rho^2_{X(\mathbf E)}}[rr] & & H^2(X(\mathbf E),\mathbb G_m) \\
& H^2(\Gamma,\overline F^*),\ar^{\pi^*_{\mathbf E}}[ul]
\ar_{\omega_{X(\mathbf E)}}[ur] & }$$
is commutative, and hence the map $\rho^2_{X(\mathbf E)}$ induces an isomorphism
$$\textrm{Br}(\mathbf E)\longrightarrow\textrm{Br}(X(\mathbf E)/F)$$
by Lemma \ref{9.9}, which we will denote by $\beta_{\mathbf E}$.
\end{notn}
\begin{defn}\label{9.11} Assume now that $F$ is a number field. Let $V$ be a scheme over a field $F$, and for every $x\in|F|$ let $V_x$ denote the base change of $V$ to $F_x$. Let $\B(V/F)\leq\textrm{Br}(V/F)$ denote the set of all elements whose image under the map $\textrm{Br}(V/F)\to\textrm{Br}(V_x/F_v)$ induced by the base change map $H^2(V,\mathbb G_m)\to H^2(V_x,\mathbb G_m)$ is zero.
\end{defn}
\begin{lemma}\label{9.12} The restriction of $\beta_{\mathbf E}$ onto $\B(\mathbf E)$ induces an isomorphism:
$$\beta_{\mathbf E}|_{\Bi(\mathbf E)}:\B(\mathbf E)\longrightarrow
\B(X(\mathbf E)/F).$$
\end{lemma}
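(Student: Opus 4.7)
The plan is to establish naturality of $\beta_{\mathbf E}$ with respect to localization and then deduce the lemma formally. Concretely, for every $x\in|F|$ I will construct a commutative square
$$
\begin{CD}
\textrm{Br}(\mathbf E) @>{\beta_{\mathbf E}}>> \textrm{Br}(X(\mathbf E)/F) \\
@V{j_x}VV @VVV \\
\textrm{Br}(\mathbf E_x) @>{\beta_{\mathbf E_x}}>> \textrm{Br}(X(\mathbf E_x)/F_x)
\end{CD}
$$
in which the right vertical arrow is induced by the base-change morphism $X(\mathbf E)_x\to X(\mathbf E)$, together with the identification $X(\mathbf E)_x\cong X(\mathbf E_x)$ to be described below. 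Granting this, the conclusion is formal: both $\beta_{\mathbf E}$ and $\beta_{\mathbf E_x}$ are isomorphisms (the former by Notation \ref{9.10}, the latter by the same argument applied to the local field $F_x$), so $\beta_{\mathbf E}$ carries the intersection of the kernels of the $j_x$ bijectively onto the intersection of the kernels of the base-change maps; by definition these intersections are $\B(\mathbf E)$ and $\B(X(\mathbf E)/F)$, respectively.

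To identify $X(\mathbf E)_x$ with $X(\mathbf E_x)$ I observe that base change commutes with the $G_1$-quotient defining $X(\mathbf E)$, and that the base change $T(\mathbf E)_x$ of the $G_2$-torsor $T(\mathbf E)$ is the torsor over $F_x$ classified by $\iota_x^*[\psi]=[\psi\circ\iota_x]\in H^1(F_x,G_2)$, which is exactly $T(\mathbf E_x)$. Under this identification, part $(iii)$ of Lemma \ref{9.4} applied to $\mathbf E_x$, together with the functoriality of the \'etale fundamental group sequence (\ref{fundamentalsequence}) under the morphism $\Spec(F_x)\to\Spec(F)$, identifies Grothendieck's sequence for $X(\mathbf E)_x/F_x$ with the sequence (\ref{1.3.1}) associated to $\mathbf E_x$.

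The commutativity of the square then reduces to the functoriality of the constructions appearing in Notation \ref{9.10}: the pullback $\pi_{\mathbf E}^*$ which cuts out $\textrm{Br}(\mathbf E)$ is natural in the morphism of short exact sequences just produced, and the comparison map $\rho^2_V$ of Definition \ref{9.7} is natural in $V$. Chasing any class in $H^2(\Gamma(\mathbf E),\overline F^*)$ around both routes of the square reveals that both outputs coincide with pulling back the class along $\Gamma_x(\mathbf E_x)\to\Gamma(\mathbf E)$ and then applying $\rho^2_{X(\mathbf E_x)}$, modulo the image of the local Brauer group of the base, which is exactly what the two compositions compute.

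The main obstacle is bookkeeping rather than mathematics: one has to check that the identifications $T(\mathbf E)_x\cong T(\mathbf E_x)$, $X(\mathbf E)_x\cong X(\mathbf E_x)$, and of the associated fundamental-group sequences are mutually compatible, so that the square above commutes strictly and not merely up to a canonical ambiguity. Once these compatibilities are in place, the isomorphism $\beta_{\mathbf E}|_{\Bi(\mathbf E)}:\B(\mathbf E)\to\B(X(\mathbf E)/F)$ is a trivial kernel-chase, using that the vertical composition $\beta_{\mathbf E_x}\circ j_x$ is an isomorphism onto its image and hence its kernel equals the kernel of $j_x$.
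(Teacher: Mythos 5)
Your proof is correct and follows essentially the same route as the paper: the paper's proof simply observes that the base change of $X(\mathbf E)$ to $F_x$ is $X(\mathbf E_x)$ and that all the maps $\beta_{\mathbf E}$, $\beta_{\mathbf E_x}$ are isomorphisms, from which the claim "immediately follows." You have merely unpacked what "immediately" conceals — the commutative localization square, the identification $T(\mathbf E)_x\cong T(\mathbf E_x)$ via $\iota_x^*[\psi]$, and the final kernel chase — which is a reasonable amount of bookkeeping to supply but does not change the argument.
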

\begin{proof} Note that the base change of $X(\mathbf E)$ to $F_x$ is $X(\mathbf E_x)$ for every $x\in|X|$. The claim now immediately follows from the fact that the maps $\beta_{\mathbf E}$ and $\beta_{\mathbf E_x}$ (for every $x\in|X|$) are all isomorphisms. 
\end{proof}
\begin{defn}\label{9.13b} Let $V$ be again a geometrically connected smooth $F$-variety. Let
$$\langle\cdot,\cdot\rangle:V(K)\times H^i(V,\mathbb G_m)
\longrightarrow \textrm H^i(\textrm{Spec}(K),\mathbb G_m)=
H^i(\Gamma,\overline F^*)$$
denote the pairing which is the pull-back of the class in with respect to the section
$\textrm{Spec}(K)\to V$ corresponding to the $K$-rational point. Note that for every section $s:\Gamma\to\pi_1(V,\eta)$ of the short exact sequence (\ref{fundamentalsequence}) and every $c\in H^i(\pi_1(V),\overline F^*)$ the pull-back $s^*(c)\in H^i(\Gamma,\overline F^*)$ only depends on the conjugacy class of $s$. Therefore we have a well-defined pairing:
$$\{\cdot,\cdot\}:\textrm{\rm Sec}(V/F)
\times H^i(\pi_1(V),\overline F^*)
\longrightarrow \textrm H^i(\Gamma,\overline F^*)$$
defined by taking the pull-back of the right argument with respect to some representative of the left argument.  
\end{defn}
\begin{lemma}\label{9.14a} The diagram:
$$\xymatrix{
V(K)\ar@<-7ex>[d]_{s_{V/K}}\times H^i(V,\mathbb G_m)
  \ar[r]^-{\langle\cdot,\cdot\rangle} & H^i(\Gamma,\overline F^*) \ar@{=}[d]  \\
\textrm{\rm Sec}(V/F)
 \times H^i(\pi_1(V),\overline F^*)
\ar@<-5ex>[u]^{\rho_V^i}\ar[r]^-{\{\cdot,\cdot\}}  & H^i(\Gamma,\overline F^*),}$$
commutes.
\end{lemma}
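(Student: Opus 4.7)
The essence of the claim is the naturality of the comparison homomorphism $\rho^i$ with respect to morphisms of geometrically connected schemes. Concretely, for any morphism $f\colon W\to V$ of such schemes over $F$, together with a compatible choice of geometric base points, the induced map $\pi_1(f)\colon\pi_1(W)\to\pi_1(V)$ and the pullback $f^*$ on \'etale cohomology fit into a commutative square
$$\xymatrix{
H^i(\pi_1(V),\overline F^*)\ar[r]^{\rho_V^i}\ar[d]_{\pi_1(f)^*} & H^i(V,\mathbb G_m)\ar[d]^{f^*}\\
H^i(\pi_1(W),\overline F^*)\ar[r]^{\rho_W^i} & H^i(W,\mathbb G_m).
}$$
The plan is to first establish this naturality, which is essentially built into the construction of $\rho^i$: the hypercoverings of $V$ coming from connected Galois covers pull back along $f$ to hypercoverings of $W$ of the same type, and the classifying cocycle of the pull-back is obtained from the original one by precomposition with $\pi_1(f)$.

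Once the naturality square is in hand, I would apply it to the morphism $f=x\colon\Spec(K)\to V$ corresponding to an arbitrary rational point $x\in V(K)$. Here $\pi_1(\Spec(K))=\Gamma$, the map $\rho^i_{\Spec(K)}$ is the tautological identification $H^i(\Gamma,\overline F^*)=H^i(\Spec(K),\mathbb G_m)$, and the induced homomorphism $\pi_1(x)\colon\Gamma\to\pi_1(V,\eta)$ is a section of Grothendieck's sequence (\ref{fundamentalsequence}) whose conjugacy class is, by definition, $s_{V/K}(x)\in\textrm{Sec}(V/F)$. For any $\widetilde c\in H^i(\pi_1(V),\overline F^*)$ with image $c=\rho_V^i(\widetilde c)$, the two pairings unpack to $\langle x,c\rangle=x^*(c)$ and $\{s_{V/K}(x),\widetilde c\}=\pi_1(x)^*(\widetilde c)$; the commutative square then yields precisely $\langle x,c\rangle=\{s_{V/K}(x),\widetilde c\}$, which is exactly the commutativity asserted by the lemma.

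The one subtlety to attend to is the dependence on base-point choices: $\pi_1(x)$ is only well-defined up to $\pi_1(\overline V,\eta)$-conjugation, since writing it as an actual homomorphism requires a path from a geometric point above $x$ to $\eta$. Different choices yield conjugate sections, which induce equal maps on group cohomology, so the pairing $\{\cdot,\cdot\}$ is already well-defined on $\textrm{Sec}(V/F)$ and no ambiguity arises. Beyond this bookkeeping the argument reduces entirely to the naturality of $\rho^i$, which is a standard property of the edge map comparing classifying-space cohomology with \'etale cohomology; I expect no serious obstacle.
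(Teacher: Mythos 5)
Your argument is exactly the one the paper uses: the paper's entire proof is the single sentence that the lemma is immediate from the naturality of $\rho_V^i$, and you have correctly unpacked that naturality as a commutative square for a morphism of geometrically connected schemes, then specialized to the structure map of a rational point, with the appropriate remark about base-point/conjugacy ambiguity. Correct and same approach, just spelled out in more detail.
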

\begin{proof} The lemma is immediate from the naturality of the map $\rho_V^i$.
\end{proof}
\begin{defn}\label{9.13} We will continue to work under the assumptions of Definition \ref{9.11}, and to use the notation there. Assume that $V_x(F_x)\neq\emptyset$ for every $x\in|F|$. Let $b\in\B(V/F)$ be arbitrary and choose a $c\in H^2(V,\mathbb G_m)$ which maps to $b$ under the quotient map. For every $x\in|F|$ let $c_x\in H^2(V_x,\mathbb G_m)$ be the base change of $c$ to $V_x$. Note that for every $\prod_{x\in|F|}p_x\in\prod_{v\in|F|}V_x(F_x)$ the sum:
$$\sum_{x\in|F|}\textrm{\rm inv}_x(p_x^*(c_x))\in\mathbb Q/\mathbb Z$$
is finite, (i.e.~all but finitely many summands are zero), and its value $\mathbf i(b)$ in $\mathbb Q/\mathbb Z$ only depends on $b$, not on the choice of $c$ or the $p_x$. (See 6.2 of \cite{Sa} why.)  We say that the unramified Brauer--Manin obstruction is the only one for the local--global principle for a class of varieties over $F$ if for every $V$ in the class such that $V_x(F_x)\neq\emptyset$ for every $x\in|F|$ the condition:
$$\mathbf i(b)=0\quad(\forall b\in\B(V/F))$$
implies that $V(F)$ is non-empty.
\end{defn}
We will need the following crucial theorem due to Borovoi:
\begin{thm}\label{borovoi} The unramified Brauer--Manin obstruction is the only one for the local--global principle for homogeneous spaces over geometrically simply connected reductive groups with abelian geometric stabiliser.
\end{thm}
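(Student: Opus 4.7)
The plan is to reduce the Brauer--Manin obstruction on the homogeneous space $X$ to a Poitou--Tate style obstruction on the Galois cohomology of the abelian geometric stabiliser, and then apply Poitou--Tate duality. Throughout, let $G$ denote the ambient simply connected reductive group, $H$ the $F$-stabiliser group scheme of a chosen base point (a finite commutative $F$-group scheme, since its geometric incarnation is abelian), so that $X = G/H$ as fppf sheaves.

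First I would exploit the short exact sequence $1 \to H \to G \to X \to 1$ of fppf sheaves. The associated long exact sequence of pointed sets yields, for every field $K \supseteq F$, an identification of $X(K)/G(K)$ with the kernel of $H^1(K, H) \to H^1(K, G)$. By Kneser's theorem $H^1(F_v, G) = 0$ at every non-archimedean place $v$, so the map is a bijection there, while the Harder--Chernousov Hasse principle states that $H^1(F, G)$ injects into the product of $H^1(F_v, G)$ over real places. Thus the problem is reduced to a statement about $H^1$ of the abelian group scheme $H$.

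Next I would compute the relative Brauer group $\textrm{Br}(X/F)$ and its unramified part $\B(X/F)$ in terms of $H$. Using the Leray spectral sequence for $X \to \textrm{Spec}(F)$, together with the vanishing of $\textrm{Pic}(\overline G)$ and $\textrm{Br}(\overline G)$ for a simply connected semisimple group, and the description of $\textrm{Pic}(\overline X)$ via characters of $\overline H$, one obtains a canonical isomorphism
$$\B(X/F) \cong \Sh^1(F, \overline H^{\vee}),$$
where $\overline H^{\vee}$ is the Cartier dual. Under this isomorphism, the Brauer--Manin pairing on $X(\mathbf{A}_F) \times \B(X/F)$ is intertwined, via the map $X(F_v)/G(F_v) \hookrightarrow H^1(F_v, H)$, with the Poitou--Tate pairing between $\Sh^1(F, \overline H^{\vee})$ and the restricted product $\prod'_v H^1(F_v, H)$. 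Given an adelic point $(x_v)$ annihilated by $\B(X/F)$, its image $(c_v)$ in $\prod'_v H^1(F_v, H)$ is therefore annihilated by $\Sh^1(F, \overline H^{\vee})$; by the nine-term Poitou--Tate exact sequence, $(c_v)$ is the localisation of some global class $c \in H^1(F, H)$. Since every $c_v$ lifts to $x_v \in X(F_v)$, it dies in $H^1(F_v, G)$, and the Hasse principle for $G$ forces $c$ to die in $H^1(F, G)$; hence $c$ lifts to an $F$-point of $X$.

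The main obstacle will be to verify the precise compatibility between the Brauer--Manin pairing on $X$ and the Poitou--Tate pairing on $H^1$. This requires a delicate diagram chase in \'etale cohomology, linking the geometric spectral sequence computation of $\textrm{Br}(X/F)$ to the arithmetic duality pairing. A second subtlety is the handling of archimedean places, where $H^1(F_v, G)$ need not vanish for simply connected $G$ over $\mathbb R$; one must modify the global class $c$ within its fibre to ensure that its real localisations come back from $X(F_v)$, exploiting the already established Hasse principle for $G$. Once these two compatibility/lifting steps are set up, the conclusion follows formally from Theorem 4.10 of \cite{Mi}.
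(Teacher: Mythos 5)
The statement you are proving is \emph{not} proved in the paper at all: the paper's proof of Theorem \ref{borovoi} is the single sentence ``See Theorem 2.2 of \cite{Bo1} on page 185.'' So there is no internal argument to compare against; the comparison is between your blind reconstruction and Borovoi's actual proof.

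Your outline captures the broad strategy (reduce to $H^1$ of the stabiliser, use Kneser's vanishing at finite places, the Hasse principle for simply connected groups, and Poitou--Tate duality), but it contains a circularity that in Borovoi's setting is precisely the hard part of the theorem. You write ``let $H$ be the $F$-stabiliser group scheme of a chosen base point \ldots\ so that $X=G/H$ as fppf sheaves.'' A base point in $X(F)$ is exactly what you are trying to produce; without one, the geometric stabiliser $\overline H$ is only defined up to a twist (and need not descend to an $F$-subgroup of $G$ at all), and the exact sequence $1\to H\to G\to X\to 1$ of pointed sets is unavailable. Borovoi's proof is structured around exactly this difficulty: he attaches to the homogeneous space a non-abelian $2$-cohomology class (the ``Springer class'' with respect to the band defined by $\overline H$), shows its image in a suitable abelianised $H^2$ is killed by the Brauer--Manin condition, and then uses a twisting argument to reduce to the case of a torsor. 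Your sketch also asserts that $H$ is \emph{finite}, which is not part of the hypothesis (abelian geometric stabiliser includes tori); and the archimedean difficulty you flag at the end --- that $H^1(\RR,G)$ need not vanish --- is genuinely delicate and your one-line ``modify $c$ within its fibre'' is not yet an argument. None of these gaps affect the paper, which simply invokes Borovoi's theorem as a black box, but they do mean your reconstruction is not a proof as written.
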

\begin{proof} See Theorem 2.2 of \cite{Bo1} on page 185.
\end{proof}
Now we are ready to give a geometric
\begin{proof}[Proof of Theorem \ref{1.1}] Assume now that $F$ is a number field. As we already noted we only need to show that $\textrm{\rm Sol}_{\mathbb A}^{\Bi}(\mathbf E)\neq0$ implies that $\textrm{\rm Sol}(\mathbf E)\neq0$. For the sake of simple notation let $X$ denote $X(\mathbf E)$ and for every $v\in|F|$ let $X_v$ denote the base change of $X$ to $F_v$. Note that $X_v=X(\mathbf E_v)$ by construction. For every $v\in|F|$ there is a bijection:
$$\sigma_{\mathbf E_v}:X_v(F_v)/G(\mathbf E_x)(F_v)\longrightarrow{\textrm{\rm Sol}}(\mathbf E_v),$$ 
by Theorem \ref{homo_space}, so our assumption implies that for every $v\in|F|$ the set $X_v(F_v)$ is non-empty. Note that $\mathbf i(\beta_{\mathbf E}(b))=\b_{\mathbf E}(b)$ for every $b\in\B(\mathbf E)$ by Lemmas \ref{9.4} and \ref{9.14a}, so the claim follows from Theorem \ref{borovoi} combined with Lemma \ref{9.12}.
\end{proof}
In the rest of this section we want to apply our geometric construction to the problem of exhibiting splitting varieties for Massey products. Such splitting varieties were constructed by Hopkins and Wickelgren in \cite{HW} for order-$3$ Massey products. Our more general construction will give splitting varieties for Massey products of any order. 
\begin{defn}\label{11.1} Let $\mathcal C^*$ be a differential graded associative algebra with product $\cup$, differential $\delta:\mathcal C^*\to\mathcal C^{*+1}$, and cohomology $H^*=\textrm{Ker}(\delta)/\textrm{Im}(\delta)$. Choose an integer $n\geq2$ and let $a_1,a_2,\ldots,a_n$ be a set of cohomology classes in $H^1$. A defining system for the order-$n$ Massey product of $a_1,a_2,\ldots,a_n$ is a set $a_{ij}$ of elements of $\mathcal C^1$ for $1\leq i<j\leq n+1$ and $(i,j)\neq(1,n+1)$ such that
$$\delta(a_{ij})=\sum_{k=i+1}^{j-1}a_{ik}\cup a_{kj}$$
and $a_1,a_2,\ldots,a_n$ is represented by $a_{12},a_{23},\ldots,a_{n,n+1}$. We say that the order-$n$ Massey product of $a_1,a_2,\ldots,a_n$ is defined if there exists a defining system. The order-$n$ Massey product $\langle a_1,a_2,\ldots,a_n\rangle_{a_{ij}}$ of $a_1,a_2,\ldots,a_n$ with respect to the defining system $a_{ij}$ is the cohomology class of
$$\sum_{k=2}^na_{1k}\cup a_{k,n+1}$$
in $H^2$. Let $\langle a_1,a_2,\ldots,a_n\rangle$ denote the subset of $H^2$ consisting of the order-$n$ Massey products of $a_1,a_2,\ldots,a_n$ with respect to all defining systems. 
\end{defn}
\begin{defn}\label{11.2} Let $e_{ij}:\textrm{Mat}_n(\mathbb Z/2\mathbb Z)\to\mathbb Z/2\mathbb Z$ be the function taking an
$n\times n$ matrix with coefficients in
$\mathbb Z/2\mathbb Z$ to its $(i, j)$-entry. Let
$$\mathcal U_n=\{U\in\textrm{Mat}_n(\mathbb Z/2\mathbb Z)\ |\ e_{ii}(U)=1,\ e_{ij}(U)=0\  (\forall\ i>j)\}$$
be the group of unipotent $n\times n$ matrices with coefficients in
$\mathbb Z/2\mathbb Z$. Let $\Gamma$ again denote the absolute Galois group of a field $F$, and let $\mathcal C^*$ be now the differential graded algebra of $\mathbb Z/2\mathbb Z$-cochains in continuous group cohomology. Then $H^1$ is naturally isomorphic to Hom$(\Gamma,\mathbb Z/2\mathbb Z)$. Given $n$ homomorphisms
$$a_i:\Gamma\longrightarrow\mathbb Z/2\mathbb Z\quad(i=1,2,\ldots,n),$$
let $\mathbf E(a_1,a_2,\ldots,a_n)$ denote the embedding problem:
$$\CD @.\Gamma\\
@.@VV{a_1\times a_2\times\cdots\times a_n}V\\
\mathcal U_{n+1}@>\phi>>(\mathbb Z/2\mathbb Z)^{n},\endCD$$
where $\phi$ is given by the rule $U\mapsto(e_{12}(U),e_{23}(U),\ldots,e_{nn+1}(U))$.
\end{defn}
We will need the following result due to Dwyer:
\begin{thm}\label{11.3} The order-$n$ Massey product $\langle a_1,a_2,\ldots,a_n\rangle$ is defined and contains $0$ if and only if the embedding problem $\mathbf E(a_1,a_2,\ldots,a_n)$ has a solution.
\end{thm}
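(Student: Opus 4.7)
The plan is to establish a direct dictionary between continuous homomorphisms $\widetilde{\psi}:\Gamma\to\mathcal U_{n+1}$ lifting $a_1\times\cdots\times a_n$ and defining systems $\{a_{ij}\}$ witnessing that $0\in\langle a_1,\ldots,a_n\rangle$, the dictionary being given by simply reading off the entries of $\widetilde{\psi}$ above the diagonal. The key input is the explicit form of the group law in $\mathcal U_{n+1}$: for $U,V\in\mathcal U_{n+1}$ and $i<j$ one has $(UV)_{ij}=U_{ij}+V_{ij}+\sum_{i<k<j}U_{ik}V_{kj}$ in $\mathbb Z/2\mathbb Z$. Consequently a function $\widetilde{\psi}:\Gamma\to\mathcal U_{n+1}$, written as $\widetilde{\psi}(g)_{ij}=u_{ij}(g)$, is a homomorphism if and only if its continuous $1$-cochain entries satisfy the Massey-type relation $\delta u_{ij}=\sum_{i<k<j}u_{ik}\cup u_{kj}$ for every $1\le i<j\le n+1$. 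In particular the sub-diagonal entries $u_{i,i+1}$ are honest cocycles, while the higher entries are tautological cochain-level witnesses of the Massey relations.

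Having made this translation, I would read off the two directions. If $\widetilde{\psi}$ solves $\mathbf E(a_1,\ldots,a_n)$, then the commutativity $\phi\circ\widetilde{\psi}=a_1\times\cdots\times a_n$ forces $u_{i,i+1}=a_i$, the cochains $a_{ij}:=u_{ij}$ for $(i,j)\neq(1,n+1)$ form a defining system, and the last relation $\delta u_{1,n+1}=\sum_{k=2}^n u_{1k}\cup u_{k,n+1}$ exhibits the associated Massey product class as the coboundary of $u_{1,n+1}$, so $0\in\langle a_1,\ldots,a_n\rangle_{a_{ij}}$. Conversely, given a defining system with zero Massey product, choose a continuous $1$-cochain $b$ with $\delta b=\sum_{k=2}^n a_{1k}\cup a_{k,n+1}$, set $u_{ij}:=a_{ij}$ for $(i,j)\neq(1,n+1)$ and $u_{1,n+1}:=b$, and assemble these into the matrix-valued function $\widetilde{\psi}$; the matrix identity above then guarantees that $\widetilde{\psi}$ is a continuous homomorphism lifting $a_1\times\cdots\times a_n$.

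The argument has essentially no substantial obstacle --- it is an unpacking of the group law in $\mathcal U_{n+1}$ --- but there is one normalisation point worth flagging. A defining system requires $a_{i,i+1}$ merely to represent the class $a_i$ in $H^1$, whereas solving the embedding problem requires the equality $u_{i,i+1}=a_i$ literally. These requirements coincide here because we work with $\mathbb Z/2\mathbb Z$-coefficients under the trivial $\Gamma$-action, so $0$-coboundaries vanish and every $1$-cocycle representing $a_i$ is precisely $a_i$. Continuity is likewise unproblematic: continuous Galois cochains are closed under cup product and coboundary, a continuous $2$-coboundary admits a continuous $1$-cochain primitive (standard for continuous cohomology of profinite groups with finite discrete coefficients), and continuity of a matrix-valued function into the discrete group $\mathcal U_{n+1}$ reduces to continuity of each of its entries.
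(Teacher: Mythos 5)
Your argument is correct, but note that the paper itself does not prove this statement: it simply cites Theorem~2.4 of Dwyer's paper \cite{Dw}, and your write-up is in effect a self-contained reconstruction of that result. The dictionary you give --- reading off the strictly upper-triangular entries $u_{ij}$ of a set-map $\widetilde\psi:\Gamma\to\mathcal U_{n+1}$ and observing that the group law $(UV)_{ij}=U_{ij}+V_{ij}+\sum_{i<k<j}U_{ik}V_{kj}$ over $\mathbb Z/2\mathbb Z$ turns the homomorphism condition into the cochain identity $\delta u_{ij}=\sum_{i<k<j}u_{ik}\cup u_{kj}$ --- is precisely the standard (Dwyer) mechanism, and both directions then fall out exactly as you describe. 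Your two flagged normalisation points are handled correctly: with trivial $\Gamma$-action on $\mathbb Z/2\mathbb Z$ one has $B^1=0$, so $1$-cocycles coincide with their classes and the literal equality $u_{i,i+1}=a_i$ required by $\phi\circ\widetilde\psi=a_1\times\cdots\times a_n$ is automatic from ``$a_{i,i+1}$ represents $a_i$''; and since $\langle a_1,\ldots,a_n\rangle_{a_{ij}}$ is by definition a class in the cohomology of the complex of \emph{continuous} cochains, its vanishing yields a continuous primitive $b$ for free, so continuity of the assembled $\widetilde\psi$ reduces entrywise. In short: same idea as the source the paper relies on; the only genuine difference is that you supply the proof where the paper defers to the literature.
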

\begin{proof} See Theorem 2.4 of \cite{Dw}.
\end{proof}
\begin{defn}\label{11.4} Let
$$\kappa:F^*\longrightarrow H^1(\Gamma,\mathbb Z/2\mathbb Z)=
\textrm{Hom}(\Gamma,\mathbb Z/2\mathbb Z)$$
be the Kummer map associated to the short exact sequence:
$$\CD 0@>>>\mathbb Z/2\mathbb Z@>>>\overline F^*@>{x\mapsto x^2}>> 
\overline F^* @>>> 0\endCD$$
of $\Gamma$-modules. Let $a_1,a_2,\ldots,a_n\in F^*$ be arbitrary. We say that a variety $X$ over $F$ is a {\it splitting variety} for the order-$n$ Massey product $\langle\kappa(a_1),\kappa(a_2),\ldots,\kappa(a_n)\rangle$ if $X$ has an $F$-rational point if and only if $\langle\kappa(a_1),\kappa(a_2),\ldots,\kappa(a_n)\rangle$ is defined and contains $0$. Let $X(a_1,a_2,\ldots,a_n)$ denote the classifying space for the embedding problem
$\mathbf E(\kappa(a_1),\kappa(a_2),\ldots,\kappa(a_n))$.
\end{defn}
One of the main results of \cite{HW} (Theorem 1.1) is the construction of a splitting variety for the order-$3$ Massey product $\langle \kappa(a_1),\kappa(a_2),\kappa(a_3)\rangle$. However the existence of such a variety is an easy consequence of our results:
\begin{thm}\label{11.5} The construction $X(a_1,a_2,\ldots,a_n)$ is a splitting variety for the order-$n$ Massey product $\langle \kappa(a_1),\kappa(a_2),\ldots,\kappa(a_n)\rangle$.
\end{thm}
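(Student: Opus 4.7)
The plan is to reduce the statement to a direct combination of Theorem \ref{homo_space} and Theorem \ref{11.3} (Dwyer's theorem). There is essentially no computation to carry out; the theorem is simply the observation that the classifying space we constructed for any embedding problem in Definition \ref{9.3} is precisely what one needs, and Dwyer's characterisation of Massey products in terms of embedding problems does the rest.

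First I would unwind the definitions. By Definition \ref{11.4}, $X(a_1,\ldots,a_n)$ is by construction the classifying space $X(\mathbf E)$ for the embedding problem
$$\mathbf E=\mathbf E(\kappa(a_1),\kappa(a_2),\ldots,\kappa(a_n))$$
introduced in Definition \ref{11.2}. By Theorem \ref{homo_space} the map $\sigma_{\mathbf E}:X(\mathbf E)(F)/G(\mathbf E)(F)\to\textrm{Sol}(\mathbf E)$ is a bijection; in particular $X(\mathbf E)(F)$ is non-empty if and only if $\textrm{Sol}(\mathbf E)$ is non-empty. By Theorem \ref{11.3} the latter is equivalent to the statement that the order-$n$ Massey product $\langle\kappa(a_1),\kappa(a_2),\ldots,\kappa(a_n)\rangle$ is defined and contains $0$. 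Chaining these two equivalences proves the claim.

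The one subtle point worth checking is that $\sigma_{\mathbf E}$ being a bijection really does give the biconditional on emptiness we need, but this is immediate: if $X(\mathbf E)(F)=\emptyset$ then the quotient $X(\mathbf E)(F)/G(\mathbf E)(F)$ is empty, and conversely if there is a solution of $\mathbf E$ then by surjectivity of $\sigma_{\mathbf E}$ there is an $F$-rational point of $X(\mathbf E)$. Since both Theorem \ref{homo_space} and Theorem \ref{11.3} have already been proved (the first in this section, the second cited from \cite{Dw}), there is no genuine obstacle to overcome.
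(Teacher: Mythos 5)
Your proposal is correct and is exactly the paper's argument: the paper's proof is the one-line statement that the theorem is an immediate consequence of Theorems \ref{homo_space} and \ref{11.3}. You have simply spelled out the chain of equivalences in more detail.
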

\begin{proof} This is an immediate consequence of Theorems \ref{homo_space} and \ref{11.3}.
\end{proof}

\section{A transcendental counter-example to weak approximation}

By slight abuse of notation for every $y\in |F|$ let
$$j_y:H^2(\Gamma(\mathbf{E}),\overline F^*)\longrightarrow 
H^2(\Gamma_y(\mathbf{E}_y),\overline F_y^*)$$
denote the composition $(\eta_y)_*\circ(\textrm{id}_{G_1}\times\iota_y)^*$.
\begin{lemma}\label{l:WA_1} Let $\mathbf{E}$ be an embedding problem over a global field $F$ such that $\textrm{\rm Sol}_{\mathbb A}(\mathbf{E})\neq\emptyset$.  If there exists a place $y\in |F|$  and an element $c\in  H^2(\Gamma(\mathbf{E}),\overline F^*)$ such that the map
$$\textrm{\rm Sol}(\mathbf{E}_y)\longrightarrow H^2(\Gamma_y,\overline F_y^*)$$
given by the rule:
$$h \mapsto s(h)^*(j_y(c))$$
is not constant, then
$$\textrm{\rm Sol}_{\mathbb A}^{\textrm{\rm Br}}(\mathbf{E})\subsetneq
\textrm{\rm Sol}_{\mathbb A}(\mathbf{E}).$$
\end{lemma}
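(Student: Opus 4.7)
The plan is to leverage the non-constancy hypothesis at $y$ to manufacture two adelic solutions whose Brauer--Manin pairings with $\sigma_{\mathbf E}(c)\in\textrm{Br}(\mathbf{E})$ disagree; at least one of them will then fail to be annihilated and thus witness the strict inclusion.

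First, I would fix any adelic solution $h=\prod_{x\in|F|}h_x\in\textrm{Sol}_{\mathbb A}(\mathbf{E})$, available by hypothesis. By the non-constancy, I can select $h_y^{(1)},h_y^{(2)}\in\textrm{Sol}(\mathbf{E}_y)$ with
$$s(h_y^{(1)})^*(j_y(c))\neq s(h_y^{(2)})^*(j_y(c))\quad\text{in } H^2(\Gamma_y,\overline F_y^*)=\textrm{Br}(F_y).$$
Non-constancy rules out $\textrm{Br}(F_y)=0$, so $y$ is not a complex place; in the remaining cases the local invariant $\textrm{inv}_y$ is injective on $\textrm{Br}(F_y)$, hence $\textrm{inv}_y(s(h_y^{(1)})^*(j_y(c)))\neq\textrm{inv}_y(s(h_y^{(2)})^*(j_y(c)))$.

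Next, I would form $h^{(1)},h^{(2)}\in\prod_{x}\textrm{Sol}(\mathbf{E}_x)$ by replacing the single $y$-th component of $h$ by $h_y^{(1)},h_y^{(2)}$ respectively. Both remain in $\textrm{Sol}_{\mathbb A}(\mathbf{E})$, since the defining almost-everywhere-unramified condition is insensitive to a modification at a single place. Because the pairing $(\cdot,\cdot)$ from Section 3 is a sum of local terms, all contributions at $x\neq y$ match between $h^{(1)}$ and $h^{(2)}$ and cancel, leaving
$$(h^{(1)},c)-(h^{(2)},c)=\textrm{inv}_y(s(h_y^{(1)})^*(j_y(c)))-\textrm{inv}_y(s(h_y^{(2)})^*(j_y(c)))\neq 0.$$
Applying $\sigma_{\mathbf E}$ as in Definition \ref{3.5} translates this into $\langle h^{(1)},\sigma_{\mathbf E}(c)\rangle\neq\langle h^{(2)},\sigma_{\mathbf E}(c)\rangle$. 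These two values cannot both vanish, so at least one $h^{(i)}$ lies in $\textrm{Sol}_{\mathbb A}(\mathbf{E})\setminus\textrm{Sol}_{\mathbb A}^{\textrm{Br}}(\mathbf{E})$, yielding the claimed strict inclusion.

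There is essentially no hard step: the whole argument is a piece of book-keeping organized around the fact that the Brauer--Manin pairing decomposes as a sum over places, so a change of a single local component of an adelic solution affects the pairing through exactly one local summand. The only mild point to check is the injectivity of $\textrm{inv}_y$, which is automatic once the hypothesis is used to rule out complex places.
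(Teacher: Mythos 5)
Your proof is correct and takes essentially the same approach as the paper: fix an ad\`elic solution, modify only the $y$-component using the non-constancy to produce two ad\`elic solutions whose pairings with $c$ differ, and conclude at least one fails to be annihilated. The only cosmetic difference is that you symmetrize into two modified candidates $h^{(1)},h^{(2)}$ rather than first testing the original $h$ and modifying only if needed, and you make explicit the injectivity of $\textrm{inv}_y$ (ruling out complex $y$), a small detail the paper passes over.
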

\begin{proof} Let $h=\prod_{x\in|F|}h_x\in\textrm{\rm Sol}_{\mathbb A}(\mathbf{E})$ be an ad\`elic solution. If $(h,c)\neq 0$ then $h\not\in\textrm{\rm Sol}_{\mathbb A}^{\textrm{\rm Br}}(\mathbf{E})$ by definition. Otherwise choose an $h'_y\in \textrm{\rm Sol}(\mathbf{E}_y)$ such that
$$s(h'_y)^*(j_y(c)) \neq s(h_y)^*(j_y(c)).$$
Now for every $x\in |F|$ such that $x\neq y$ we set $h'_x=h_x$. Then $h'=\prod_{x\in|F|}h'_x\in\textrm{\rm Sol}_{\mathbb A}(\mathbf{E})$ but
$$(h',c)=(h,c)+s(h'_y)^*(j_y(c))-s(h_y)^*(j_y(c))\neq 0.$$
\end{proof}
\begin{defn} Let $\mathbf{E}$ be an embedding problem over a global field $F$ described by the diagram (\ref{1.0.1}). We shall say that a place $x\in |F|$ \emph{splits in $\mathbf{E}$} if the restriction $\psi\circ\iota_x:\Gamma_x\to G_2$ is trivial.
\end{defn}
\begin{lemma} Let $\mathbf{E}$ be an embedding problem over a global field $F$ such that  $\textrm{\rm Ker}(\mathbf{E})_{ab}=0$ and $\textrm{\rm Sol}_{\mathbb A}(\mathbf{E})\neq\emptyset$. Let $y\in |F|$ be a place that splits in $\mathbf{E}$, and assume that the pairing:
\begin{equation}\label{10.3.1}
\textrm{\rm Hom}(\Gamma_y,\textrm{\rm Ker}(\mathbf{E}))\times H^2(\textrm{\rm Ker}(\mathbf{E}),\!\!\!\!\!\!\bigoplus_{p\neq\textrm{\rm char}(F)}\!\!\!\!\!\!\mathbb Q_p/\mathbb Z_p)^{\Gamma} \to H^2(\Gamma_y,\!\!\!\!\!\!\bigoplus_{p\neq\textrm{\rm char}(F)}\!\!\!\!\!\!\mathbb Q_p/\mathbb Z_p)
\end{equation}
given by the rule $(a,c)\mapsto a^*(c)$ is non-zero. Then we have:
$$\textrm{\rm Sol}_{\mathbb A}^{\Br}(\mathbf{E})\subsetneq\textrm{\rm Sol}_{\mathbb A}^{\textrm{\rm Br}_1}(\mathbf{E})=
\textrm{\rm Sol}_{\mathbb A}(\mathbf{E}).$$
\end{lemma}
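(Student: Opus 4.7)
The plan is to establish the two claims separately. For the equality $\textrm{Sol}_{\mathbb A}^{\textrm{Br}_1}(\mathbf{E}) = \textrm{Sol}_{\mathbb A}(\mathbf{E})$, Lemma~\ref{2.2} identifies $\textrm{Br}_1(\mathbf{E})$ with $H^1(\Gamma, \textrm{Ker}(\mathbf{E})_{ab}^{\vee})$, which vanishes because $\textrm{Ker}(\mathbf{E})_{ab} = 0$ by hypothesis; hence the Brauer-Manin pairing restricted to $\textrm{Br}_1(\mathbf{E})$ is identically zero, and every ad\`elic solution lies in $\textrm{Sol}_{\mathbb A}^{\textrm{Br}_1}(\mathbf{E})$. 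For the strict inclusion $\textrm{Sol}_{\mathbb A}^{\textrm{Br}}(\mathbf{E}) \subsetneq \textrm{Sol}_{\mathbb A}^{\textrm{Br}_1}(\mathbf{E})$ the plan is to apply Lemma~\ref{l:WA_1}: it is enough to exhibit a class $c' \in H^2(\Gamma(\mathbf{E}), \overline F^*)$ for which the map $h \mapsto s(h)^*(j_y(c'))$ fails to be constant on $\textrm{Sol}(\mathbf{E}_y)$.

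Because $y$ splits in $\mathbf{E}$ the defining condition $\phi(a) = \psi\iota_y(b)$ forces $a \in K := \textrm{Ker}(\mathbf{E})$, so $\Gamma_y(\mathbf{E}_y) = K \times \Gamma_y$ as an internal direct product, solutions of $\mathbf{E}_y$ are precisely the continuous homomorphisms $h_y: \Gamma_y \to K$ (with $h_y = 0$ a valid choice), and the section attached to $h_y$ is $s(h_y)(g) = (h_y(g), g)$, with $s_0 = \iota_2: g \mapsto (1, g)$ the zero section. Now I analyse the Hochschild-Serre spectral sequence of $1 \to K \to K \times \Gamma_y \to \Gamma_y \to 1$ with coefficients in $\overline F_y^*$: since $H^1(K, \overline F_y^*) = \textrm{Hom}(K_{ab}, \overline F_y^*) = 0$ the entire row $E_2^{p,1}$ vanishes, and together with the retraction $\iota_2^*$ of $\pi_{\mathbf{E}_y}^*$ this collapses the filtration on $H^2$ into a direct-sum decomposition
$$H^2(\Gamma_y(\mathbf{E}_y), \overline F_y^*) = \ker(\iota_2^*) \oplus \pi_{\mathbf{E}_y}^*\bigl(H^2(\Gamma_y, \overline F_y^*)\bigr),$$
with $\iota_1^*$ restricting to an isomorphism $\ker(\iota_2^*) \xrightarrow{\sim} H^2(K, \overline F_y^*)^{\Gamma_y}$. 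Combined with $\textrm{Br}_1(\mathbf{E}) = 0$, Lemma~\ref{2.2} furnishes a natural isomorphism $\textrm{Br}(\mathbf{E}) \cong H^2(K, \bigoplus_{p \neq \textrm{char}(F)} \mathbb Q_p/\mathbb Z_p)^{\Gamma}$.

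By hypothesis there exist $a: \Gamma_y \to K$ and $c \in H^2(K, \bigoplus_p \mathbb Q_p/\mathbb Z_p)^{\Gamma}$ with $a^*(c) \neq 0$ in $H^2(\Gamma_y, \bigoplus_p \mathbb Q_p/\mathbb Z_p)$; via the embedding $\bigoplus_p \mathbb Q_p/\mathbb Z_p \hookrightarrow \overline F_y^*$ this class is still non-zero inside $H^2(\Gamma_y, \overline F_y^*)$, since the cokernel $(\overline F_y^*)^{ct}$ is uniquely divisible and hence has vanishing positive $\Gamma_y$-cohomology. I lift $c$ to $c' \in H^2(\Gamma(\mathbf{E}), \overline F^*)$ and decompose $j_y(c') = \widehat c + \pi_{\mathbf{E}_y}^*(\beta)$, with $\widehat c \in \ker(\iota_2^*)$ corresponding to (the image of) $c$ and $\beta \in H^2(\Gamma_y, \overline F_y^*)$. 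The $\pi_{\mathbf{E}_y}^*(\beta)$-part pulls back to $\beta$ under any section, so for every $h_y$ one has $s(h_y)^*(j_y(c')) - s_0^*(j_y(c')) = s(h_y)^*(\widehat c)$. The key technical identity is $s(h_y)^*(\widehat c) = h_y^*(c)$ under the isomorphism $\ker(\iota_2^*) \cong H^2(K, \overline F_y^*)^{\Gamma_y}$; granted this, taking $h_y = a$ gives $s(a)^*(j_y(c')) - s_0^*(j_y(c')) = a^*(c) \neq 0$, so the map $h \mapsto s(h)^*(j_y(c'))$ is non-constant and Lemma~\ref{l:WA_1} yields the strict inclusion.

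The principal difficulty is the identity $s(h_y)^*(\widehat c) = h_y^*(c)$, which reduces to a cocycle-level computation on $K \times \Gamma_y$. The plan is to normalise a cocycle $\tilde f$ representing $\widehat c$ so that $\tilde f((1, g_1), (1, g_2)) = 0$ and also $\tilde f((k, 1), (1, g)) = \tilde f((1, g), (k, 1)) = 0$; the feasibility of the latter hinges once more on $K_{ab} = 0$, both because the obstruction lies in $E_2^{1,1} = H^1(\Gamma_y, \textrm{Hom}(K_{ab}, \overline F_y^*)) = 0$ and because any $1$-cochain adjustment restricted to $K$ would have to be a homomorphism $K \to \overline F_y^*$ and hence vanish. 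After these normalisations the cocycle identity on $K \times \Gamma_y$, combined with the commutation of the two factors, collapses $\tilde f((h_y(g_1), g_1), (h_y(g_2), g_2))$ to $f(h_y(g_1), h_y(g_2))$ representing $h_y^*(c)$. Making these normalisation steps rigorous---especially realising the vanishing of $E_2^{1,1}$ at the cocycle level---is the main technical task; an equivalent packaging is to define the pairing in~\eqref{10.3.1} itself as $(a, c) \mapsto s(a)^*(\widehat c)$, under which the non-constancy becomes tautological.
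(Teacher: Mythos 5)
Your strategy coincides with the paper's: identify $\textrm{Br}_1(\mathbf E)=0$ from $\textrm{Ker}(\mathbf E)_{ab}=0$ to dispose of the equality, and then invoke Lemma~\ref{l:WA_1} by exhibiting a class whose pullbacks along the two sections $s(\sigma(a))$ and $s(\sigma(0))$ of the split sequence $1\to K\to K\times\Gamma_y\to\Gamma_y\to1$ differ. In one respect you are more careful than the printed proof: the paper asserts outright that $s(\sigma(0))^*(j_y(\tilde c))=0$ and $s(\sigma(a))^*(j_y(\tilde c))=a^*(c)$, but the first of these is not literally correct — $s(\sigma(0))^*(j_y(\tilde c))$ picks up the $\pi_{\mathbf E_y}^*$-component of $j_y(\tilde c)$, which has no reason to vanish. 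Your formulation via the direct-sum decomposition $H^2(K\times\Gamma_y,\overline F_y^*)=\ker(i_2^*)\oplus\pi_{\mathbf E_y}^*H^2(\Gamma_y,\overline F_y^*)$ and the resulting computation of the \emph{difference} $s(h_y)^*(j_y(c'))-s_0^*(j_y(c'))=s(h_y)^*(\widehat c)$ is precisely what is needed to make the reduction to Lemma~\ref{l:WA_1} airtight, and is cleaner than the source.

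However, the argument as you have left it is incomplete, and you flag the right spot. The identity $s(h_y)^*(\widehat c)=h_y^*(c)$ is genuinely delicate, and the normalisation you outline cannot quite be executed as stated. After killing $f|_{\Gamma_y\times\Gamma_y}$, trying to make both $f((k,1),(1,g))$ and $f((1,g),(k,1))$ vanish simultaneously by a single coboundary $dw'$ forces a compatibility condition on $w'(k,1)$ of the form $(\chi(g)-1)\,w'(k,1)=v(g,k)-u(k,g)$ involving the cyclotomic character $\chi$; this is not automatically solvable and the Hochschild--Serre vanishing $E_2^{1,1}=0$ that you invoke does not straightforwardly give it. More fundamentally, there is a coefficient-module mismatch that neither you nor the paper addresses: $s(h_y)^*(\widehat c)$ lands in $H^2(\Gamma_y,\overline F_y^*)$ with the cyclotomic $\Gamma_y$-action, while $h_y^*(c)$ is a class with coefficients in $\bigoplus_p\mathbb Q_p/\mathbb Z_p$ carrying the \emph{trivial} $\Gamma_y$-action (because it is pulled back from $K$, on which the action is trivial), and the inclusion $\mu_\infty\hookrightarrow\overline F_y^*$ is only $\Gamma_y$-equivariant for the cyclotomic structure. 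Your closing remark about redefining the pairing in~\eqref{10.3.1} as $(a,c)\mapsto s(a)^*(\widehat c)$ is in fact the clean way to sidestep this and would render the non-constancy tautological; but if you keep the paper's formulation you must reconcile these two module structures, or else restrict to the situation (as in the intended application, Proposition~\ref{10.4}, where $\Gamma_y\cong\mathbb Z/2\mathbb Z$ and the coefficients are $\mathbb Z/2\mathbb Z$) in which the two actions agree.
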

\begin{proof} For simplicity let
$$\mu_{\infty}=\bigoplus_{p\neq\textrm{\rm char}(F)}\!\!\!\!\!\!\mathbb Q_p/\mathbb Z_p,$$
equipped with the trivial $\textrm{Ker}(\mathbf{E})$-action. Since $\textrm{Ker}(\mathbf{E})_{ab} = 0$ we have $$\textrm{Sol}_{\mathbb A}^{\Br_1}(\mathbf{E})=\textrm{Sol}_{\mathbb A}(\mathbf{E})$$
by Theorem \ref{4.10}. Therefore it will be enough to show that 
$$\textrm{Sol}_{\mathbb A}^{\Br}(\mathbf{E})\subsetneq
\textrm{Sol}_{\mathbb A}(\mathbf{E}).$$
Choose a pair $(a,c) \in \textrm{\rm Hom}(\Gamma_y,\textrm{\rm Ker}(\mathbf{E}))\times H^2(\textrm{\rm Ker}(\mathbf{E}),
\mu_{\infty})^{\Gamma} $ such that $a^*(c)\neq0$. Since $\textrm{\rm Ker}(\mathbf{E})_{ab}=0$ the map:
$$p:H^2(\Gamma(\mathbf{E}),\overline F^*) \lrar H^2(\Ker(\mathbf E),\mu_{\infty})^{\Gamma}$$
in the short exact sequence in Lemma \ref{2.2} is surjective. Let $\tilde{c}\in H^2(\Gamma(\mathbf{E}),\overline F^*)$ be an element such $p(\tilde{c}) = c$. Since $y$ splits we have  $\Gamma_y(\mathbf{E})\cong
\Ker(\mathbf{E})\times\Gamma_y$ and
Sol$(\mathbf{E}_y)$ is just Hom$(\Gamma_y,\textrm{\rm Ker}(\mathbf{E}))$. We have a commutative diagram:
$$\xymatrix{
\textrm{Sol}(\mathbf{E}_{y})\times H^2(\Gamma(\mathbf{E}),\overline F^*)
 \ar@<7ex>[d]^{p} \ar[r] & H^2(\Gamma_y,\overline F^*) \ar@{=}[d]  \\
\textrm{Hom}(\Gamma_y,\textrm{Ker}(\mathbf{E})) \ar@<7ex>[u]^{\cong}_{\sigma} \times H^2(\textrm{Ker}(\mathbf{E}),
\mu_{\infty})^{\Gamma}\ar[r] & H^2(\Gamma_y,\mu_{\infty}),}$$
where the upper horizontal map is given by the rule:
$$(h,d)\mapsto s(h)^*(j_y(d))$$
and the lower horizontal map is the pull-back with respect to homomorphisms. The commutativity of the diagram above follows from the fact that for every $b \in\textrm{Hom}(\Gamma_y,\textrm{\rm Ker}(\mathbf{E}))$
the diagram:
$$\xymatrix{
\Gamma_y \ar[rd]_{\textrm{Id}_{\Gamma_y}\times b}\ar[r]^{\sigma(b)} & \Gamma_y(\mathbf{E}) \ar[r] \ar[d]^{\cong} & \Gamma(\mathbf{E}) \\ 
\empty &          \Gamma_y \times \Ker(\mathbf{E}) \ar[r]^-{\pi_2}& \Ker(\mathbf{E}) \ar[u]\\}$$ 
is commutative. Now let $0\in\textrm{Hom}(\Gamma_y,\textrm{\rm Ker}(\mathbf{E}))$ be the trivial map. By the above we have: 
$$s(\sigma(0))^*(j_y(\tilde{c}))=0^*(c)=0, \; s(\sigma(a))^*(j_y(\tilde{c}))=a^*(c)\neq 0,$$
so by Lemma ~\ref{l:WA_1} we have:
$$\textrm{\rm Sol}_{\mathbb A}^{\textrm{\rm Br}}(\mathbf{E})\subsetneq
\textrm{\rm Sol}_{\mathbb A}(\mathbf{E}).$$
\end{proof}
\begin{prop}\label{10.4} Let $\mathbf{E}$ be an embedding problem over a number field $F$ such that $\textrm{\rm Ker}(\mathbf{E})_{ab} = 0$ and $\textrm{\rm Sol}_{\mathbb A}(\mathbf{E}) \neq \emptyset$. Let $y\in |F|$ be a real place such that $y$ splits in $\mathbf{E}$, and assume further that the pairing
\begin{equation}\label{10.4.1}
\textrm{\rm Hom}(\mathbb Z/2\mathbb Z,\textrm{\rm Ker}(\mathbf{E}))\times
H^2(\textrm{\rm Ker}(\mathbf{E}),\mathbb Z/2\mathbb Z)
\to H^2(\mathbb Z/2\mathbb Z,\mathbb Z/2\mathbb Z)
\end{equation}
given by the rule $(a,c)\mapsto a^*(c)$ is non-zero. Then we have:
$$\textrm{\rm Sol}_{\mathbb A}^{\Br}(\mathbf{E})\subsetneq\textrm{\rm Sol}_{\mathbb A}^{\textrm{\rm Br}_1}(\mathbf{E})=
\textrm{\rm Sol}_{\mathbb A}(\mathbf{E}).$$
\end{prop}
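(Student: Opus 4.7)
The plan is to prove the two parts of the conclusion separately.

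\emph{Part 1 (equality).} Set $K := \textrm{Ker}(\mathbf{E})$. Since $K_{ab} = 0$, the group $K$ is perfect, so $[K,K] = K$ and $\phi$ descends to an isomorphism $G_1/[K,K] \cong G_2$. Hence $\mathbf{E}^{ab}$ has the identity as its lower horizontal arrow, $\psi$ itself is its unique solution, and one verifies $\mathbf{E}_\psi = \mathbf{E}$. Theorem~\ref{4.10} (whose characteristic hypothesis is vacuous, since $|K_{ab}| = 1$) then gives
$$\textrm{Sol}_{\mathbb A}^{\textrm{Br}_1}(\mathbf{E}) = \bigcup_{\pi \in \textrm{Sol}(\mathbf{E}^{ab})} r_\pi(\textrm{Sol}_{\mathbb A}(\mathbf{E}_\pi)) = r_\psi(\textrm{Sol}_{\mathbb A}(\mathbf{E})) = \textrm{Sol}_{\mathbb A}(\mathbf{E}).$$

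\emph{Part 2 (strict inclusion).} I would invoke the preceding lemma, whose hypothesis is the non-vanishing of the pairing (\ref{10.3.1}), and deduce this from (\ref{10.4.1}). Since $y$ is a real place, $\Gamma_y \cong \mathbb{Z}/2\mathbb{Z}$, identifying the first factors of the two pairings. The $\Gamma$-equivariant inclusion $\mathbb{Z}/2\mathbb{Z} = \mu_2 \hookrightarrow \mu_\infty$ (the $\Gamma$-action on $\{\pm 1\}$ is trivial) induces a $\Gamma$-equivariant map $H^2(K, \mathbb{Z}/2\mathbb{Z}) \hookrightarrow H^2(K, \mu_\infty)$, injective because $H^1(K, \mu_\infty) = \textrm{Hom}(K_{ab}, \mu_\infty) = 0$; it remains injective on $\Gamma$-invariants. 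The parallel map $H^2(\mathbb{Z}/2\mathbb{Z}, \mathbb{Z}/2\mathbb{Z}) \hookrightarrow H^2(\Gamma_y, \mu_\infty) = \textrm{Br}(F_y)$ sends the generator to the class of the quaternion algebra over $\mathbb{R}$, hence is also injective. By naturality of $a_\alpha^*$, these maps intertwine the two pairings; so it suffices to produce $\alpha \in K[2]$ and a $\Gamma$-\emph{invariant} class $\tilde{c} \in H^2(K, \mathbb{F}_2)^\Gamma$ with $a_\alpha^*(\tilde{c}) \neq 0$.

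\emph{Main obstacle.} The hypothesis (\ref{10.4.1}) only furnishes a pair $(\alpha, c)$ with $a_\alpha^*(c) \neq 0$, where $c$ need not be $\Gamma$-invariant. Upgrading $c$ to a $\Gamma$-invariant class is the heart of the matter. The $\Gamma$-action on $H^2(K, \mathbb{F}_2)$ factors through the finite group $G := \textrm{Im}(\psi) \leq G_2$ (inner automorphisms of $K$ act trivially on group cohomology). I plan to construct $\tilde{c}$ as the orbit sum $\tilde{c} := \sum_{c' \in G \cdot c} c' \in H^2(K, \mathbb{F}_2)^G$. Using the formula $a_\alpha^*(g \cdot c) = a_{g^{-1} \cdot \alpha}^*(c)$, a direct calculation yields
$$a_\alpha^*(\tilde{c}) \equiv |\textrm{Stab}_G(\alpha)| \cdot \Bigl(\sum_{\beta \in G \cdot \alpha} a_\beta^*(c)\Bigr) \pmod{2}$$
(assuming $|\textrm{Stab}_G(c)|$ is odd; otherwise one works with a representative in the orbit satisfying this). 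The required non-vanishing thus reduces to a combinatorial condition on stabilisers and orbit sums; in the concrete counter-example Example~\ref{the_example} to follow this is verified directly, e.g.\ by selecting $\alpha$ to be a $G$-fixed involution and $c$ to be already $G$-invariant. With such $\tilde{c}$ in hand, the preceding lemma delivers the strict inclusion $\textrm{Sol}_{\mathbb A}^{\textrm{Br}}(\mathbf{E}) \subsetneq \textrm{Sol}_{\mathbb A}(\mathbf{E})$.
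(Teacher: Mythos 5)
Your \emph{Part 1} is correct and is in fact exactly the argument that appears in the paper's preceding lemma: since $\textrm{Ker}(\mathbf{E})_{ab}=0$, the map $\mathbf{e}:\mathbf{E}\to\mathbf{E}^{ab}$ is essentially the identity on the target, and Theorem~\ref{4.10} immediately gives $\textrm{Sol}_{\mathbb A}^{\textrm{Br}_1}(\mathbf E)=\textrm{Sol}_{\mathbb A}(\mathbf E)$. No issue there.

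For \emph{Part 2}, your reduction to the unnamed lemma preceding Proposition~\ref{10.4} is the right strategy, and your description of how $\Gamma_y\cong\mathbb Z/2\mathbb Z$ and the inclusion $\mu_2\hookrightarrow\mu_\infty$ relate the two pairings (injectivity of $H^2(K,\mathbb Z/2\mathbb Z)\to H^2(K,\mu_\infty)$ using $K_{ab}=0$, identification of the targets) is correct and matches the intent of the paper's proof. You are also right that the apparent discrepancy between the $\Gamma$-invariance condition on the second argument of (\ref{10.3.1}) and its absence in (\ref{10.4.1}) is not explicitly addressed in the paper: the published proof simply asserts, after noting that $\Gamma_y$ acts trivially on $H^2(\textrm{Ker}(\mathbf E),\mathbb Z/2\mathbb Z)$, that the two pairings ``are the same,'' and the triviality of the $\Gamma_y$-action alone does not produce a class fixed by all of $\Gamma$. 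So the concern you flag is genuine; the statement of the hypothesis (\ref{10.4.1}) should really carry a $\Gamma$-superscript for the lemma to apply verbatim (which costs nothing in Example~\ref{the_example}, where $H^2(A_5,\mathbb Z/2\mathbb Z)\cong\mathbb Z/2\mathbb Z$ has no nontrivial automorphisms).

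However, your proposed repair via orbit-summing does not constitute a proof. If $\tilde c=\sum_{c'\in G\cdot c}c'$ then $a_\alpha^*(\tilde c)=\sum_{g\in R}a_{g^{-1}\alpha}^*(c)$ where $R$ is a set of coset representatives for $G/\textrm{Stab}_G(c)$; this does not simplify to $|\textrm{Stab}_G(\alpha)|\cdot\bigl(\sum_{\beta\in G\alpha}a_\beta^*(c)\bigr)$ without further assumptions, and the parities involved need not cooperate (you yourself invoke extra hypotheses on $|\textrm{Stab}_G(c)|$ and, in the end, defer to the specific example in which $c$ is already invariant). So the orbit-sum step is not a valid argument for the proposition in the generality stated, and your proof of Part 2 remains incomplete. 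The cleanest resolution is to read (\ref{10.4.1}) as a pairing on $\Gamma$-invariant classes, as the preceding lemma's hypothesis makes clear is the relevant input; with that reading, the paper's two-sentence argument goes through and the orbit-sum detour is unnecessary.
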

\begin{proof} Because $y$ is real we have $\Gamma_y\cong\mathbb Z/2\mathbb Z$. Since $y$ splits in $\mathbf{E}$ the group $\Gamma_y$ acts trivially on $\textrm{\rm Ker}(\mathbf{E})$, and so on the group $H^2(\textrm{\rm Ker}(\mathbf{E}),\mathbb Z/2\mathbb Z)$, and hence the pairing in (\ref{10.4.1}) is the same as the pairing in (\ref{10.3.1}). So the claim follows from the previous lemma.
\end{proof}
\begin{example}\label{the_example} Let 
\begin{equation}\label{10.5.1}
1 \lrar A_5 \x{i}{\lrar} S_5 \x{p}{\lrar} \mathbb Z/2\mathbb Z \lrar 1
\end{equation}
be the short exact sequence where $i$ the inclusion of $A_5$ into $S_5$. Consider now the embedding problem over $\mathbb Q$ defined by the diagram:
\begin{equation}\label{10.5.2}
\CD @.\Gamma\\
@.@V\psi VV\\
S_5@>p>>\mathbb Z/2\mathbb Z\endCD
\end{equation}
where the map $\psi$ corresponds to the extension $\mathbb Q(\sqrt(d))  /\mathbb Q$ for some square free positive integer $d$. Since (\ref{10.5.1})
splits we have Sol$(\mathbf{E})\neq 0$. Moreover the unique real place of $\mathbb Q$ splits in $\mathbf E$, since $d$ is positive. It is well known that $A_5^{ab}$ is trivial and $H^2(A_5,\mathbb Z) = \mathbb Z/2\mathbb Z$. In fact the nontrivial element in $H^2(A_5,\mathbb Z/2\mathbb Z)$ corresponds
to the central extension:
$$1\longrightarrow \mathbb Z/2\mathbb Z \longrightarrow I \longrightarrow A_5 \longrightarrow 1,$$
where $I$ is the binary icosahedral group. Now when pulling back by the map $\mathbb Z/2\mathbb Z \to A_5$ sending $1$
to $(1,2)(3,4)$ we get the sequence
$$1\longrightarrow \mathbb Z/2\mathbb Z \longrightarrow \mathbb Z/4\mathbb Z \longrightarrow \mathbb Z/2\mathbb Z
\longrightarrow 1$$
which corresponds to the non-trivial element in $H^2(\mathbb Z/2\mathbb Z,\mathbb Z/2\mathbb Z)$. So Proposition \ref{10.4} implies that
$$\textrm{\rm Sol}_{\mathbb A}^{\Br}(\mathbf{E})\subsetneq\textrm{\rm Sol}_{\mathbb A}^{\textrm{\rm Br}_1}(\mathbf{E})=
\textrm{\rm Sol}_{\mathbb A}(\mathbf{E}).$$
\end{example}

\end{document}